\documentclass[11pt,tikz]{article}
\usepackage{pgfplots}

\usepackage[margin=1in]{geometry}
\usepackage[numbers,sort&compress]{natbib}
\usepackage{amsmath,amssymb,amsthm,mathtools}
\usepackage{bm}
\usepackage{microtype}
\usepackage[hidelinks]{hyperref}
\usepackage{enumitem}
\usepackage{xcolor}
\usepackage{tikz}
\usetikzlibrary{arrows.meta,positioning,fit,backgrounds}
\usepackage{graphicx}

\newtheorem{theorem}{Theorem}[section]
\newtheorem{proposition}[theorem]{Proposition}
\newtheorem{lemma}[theorem]{Lemma}
\newtheorem{corollary}[theorem]{Corollary}

\theoremstyle{definition}
\newtheorem{definition}[theorem]{Definition}

\theoremstyle{remark}
\newtheorem{remark}[theorem]{Remark}

\newcommand{\R}{\mathbb{R}}
\newcommand{\C}{\mathbb{C}}
\newcommand{\Z}{\mathbb{Z}}
\newcommand{\N}{\mathbb{N}}
\newcommand{\T}{\mathbb{T}}

\newcommand{\HH}{\mathcal{H}}
\newcommand{\cA}{\mathcal{A}}
\newcommand{\cL}{\mathcal{L}}
\newcommand{\dom}{\mathcal{D}}
\newcommand{\cD}{\mathcal{D}}

\newcommand{\sap}{\sigma_{\mathrm{ap}}}
\newcommand{\spp}{\sigma_{\mathrm{p}}}

\newcommand{\norm}[1]{\lVert #1\rVert}
\newcommand{\abs}[1]{\lvert #1\rvert}

\newcommand{\Om}{\Omega}

\newcommand{\eps}{\varepsilon}

\DeclareMathOperator{\dist}{dist}
\DeclareMathOperator{\supp}{supp}

\IfFileExists{ulem.sty}{\usepackage[normalem]{ulem}
  }{
  }

\title{Range Failure, Resolvent Growth, and Resonance \\
 in Partially Dissipative Systems}

\author{\small
Boris Muha\footnote{University of Zagreb, Faculty of Science, Department of Mathematics, Croatia;~ {\em borism@math.hr}}
\hskip 1cm
Sebastian Schwarzacher\footnote{University of Uppsala, Sweden; {\em sebastian.schwarzacher@math.uu.se}}
\footnote{Charles University, Prague, Czechia; {\em schwarz@karlin.mff.cuni.cz}}
\hskip 1cm
Justin T. Webster\footnote{University of Maryland, Baltimore County, 1000 Hilltop Circle, Baltimore, MD, 21250;~ {\em websterj@umbc.edu}}
}

\begin{document}

\maketitle

\begin{abstract}
\noindent We introduce several notions of resonance for systems exhibiting weak or partial dissipation:  $\dot u=Au+f(t)$, with $A$ the generator of a strongly stable semigroup $S(t)$ on a Hilbert space $H$. We refer to resonance as the phenomenon where a time-periodic $f$ may yield an unbounded $u$,  classically occurring when $A$ has imaginary eigenvalues. In infinite dimensions, however, unbounded growth may depend on topological choices, and several sorts of``resonance" may occur. 

Classically, existence of $T$-periodic $u$ under $T$-periodic $f$ is equivalent to a range condition: $\mathcal{R}(I-S(T))=H$. Consequently, its failure permits unbounded solution growth and we elucidate that connection through properties of $S(t)$. We describe a hierarchy of range failures, and provide an associated resonance taxonomy.
The growth rate of ~$||(A-\lambda I)^{-1}||_H$ on ~$i\R$ dictates a regularity gap between $u$ and $f$, and, for rapid growth in $\lambda$, that gap can be infinite. In said case, we demonstrate implications for periodic solvability, and a mechanism for constructing smooth resonant forces.

The theory developed here is motivated by (and demonstrated for) a hyperbolic-parabolic system, where the latter component provides the only system dissipation. Such dynamics are a simplification of fluid-structure phenomena, which demonstrate strong stability but do not support unconditional periodic well-posedness. Though point-spectral resonance is ruled out, we note that $\mathcal R(I-S(T))\neq H$. Our main result here shows that, for a carefully constructed geometry, a classical heat-wave system possesses periods $T$ which yield infinite derivative loss and, via supporting results, yield resonance with smooth forcing.

\noindent
\vskip.2cm

\noindent {\em Keywords}: periodic solutions, resonance, partially dissipative systems, resolvent analysis, hyperbolic-parabolic systems, semigroup stability

\vskip.2cm

\noindent
{\em 2020 AMS}: Primary 35B34; Secondary 35B10, 47D06, 47A10, 35M10, 35B40.

\vskip.2cm

\end{abstract}

\section{Introduction}\label{sec:intro}
For elastic systems under time-dependent loads, the concept of {\em resonance} has been studied for hundreds of years across science, engineering, and mathematics. The word itself conjures images of storied occurrences involving bridges and buildings, as well as basic physics experiments (perhaps even a night at the opera!). A n\"aive definition of a resonance might be: a small time-periodic input produces a large---perhaps unboundedly growing---response from a solid body.  In the world of ODEs and finite dimensional systems, the story is quite
familiar: a forced frequency finds a natural frequency of the underlying
conservative dynamics, and the response grows {\em secularly}, yielding a solution that {grows linearly} in time.  This picture is so compelling and scientifically ubiquitous that it often becomes a default intuition for
resonance itself.  In infinite dimensional systems, however, we shall see that this intuition is too narrow. It is the unboundedness of the response, rather than the mechanism producing it, that survives the passage to infinite dimensions. We will consider an abstract system on an arbitrary (possibly infinite dimensional) Hilbert space:
\begin{align}\label{PDS}
   \dot u = Au+f(t)\in H.
\end{align} Accordingly, we focus on {\em unbounded response} in our central definition of resonance: \begin{quote} {\em The system \eqref{PDS} is {\em secularly resonant} if some time-periodic forcing yields an unbounded solution (given precisely in Definition~\ref{def:secular}).}\end{quote} Every other notion considered in this paper is measured against this one.

Most physical systems exhibit some type of energy dissipation, for instance through friction, viscosity, or thermal diffusion. As such, there is some competition between the manner in which energy enters the system through $f(t)$ and leaves the system via dissipation, measured through  $(Au,u)_H$. Many damped physical systems yield no imaginary eigenvalues of $A$, and may have strong enough damping so that unforced, finite-energy trajectories decay to zero as $t \to \infty$. It is natural to ask, for such systems, whether a traditional resonance occur---{\em can a dissipative system under a periodic forcing admit an unbounded response?} 
The
present paper attempts to close the gap between finite dimensional
algebraic intuition and topological pathologies for
infinite dimensional evolutions.

The specific motivating class here is that of so-called {\em partially dissipative systems}. These are dynamics in which dissipation is present but spatially or systemically localized. In these cases, it may not be clear that damping mechanisms have sufficient strength to ``control" the entire state (hence may be referred to as {\em weakly damped}).  Notable examples include: hyperbolic-parabolic couplings,
thermo- and magneto-elastic systems, indirectly or locally damped waves,
acoustic or boundary-damped hyperbolic models, and fluid-structure interactions where
dissipation is transmitted through an interface
\cite{Galdi,ZhangZuazua2003,RauchZhangZuazua2005,ZhangZuazua2007,Duyckaerts2007,AvalosLasieckaTriggiani2016,BattyPaunonenSeifert2019,Avalos2007,AvalosTriggiani2007,AvalosTriggiani2009}.
For such systems one  asks whether partial/weak/compartmentalized dissipation is strong enough
to suppress potential resonances associated to the elastic/hyperbolic component(s). We will discuss various notions of stability below, and relate them to the problem of forced resonance.

A complementary property to system resonance is {\em periodic well-posedness} (PWP) thereof. By this we mean that the system yields existence and uniqueness of $T$-periodic solutions in the finite energy class for a given class of $T$-periodic forcing functions \cite{Pruss1984}. Of course, the  value of $T$ may be central to whether or not resonance can occur for forcings of that period. Periodic problems have a notably different structure than Cauchy (initial value) problems.  In a Cauchy
problem, the initial state $u_0$ is prescribed and propagated by $A$ through $f$ in the dynamics.  In a periodic problem,
{\em the initial state is the central unknown}.  Indeed, given a period $T$ and a forcing $f(\cdot +T)=f(\cdot)$, one must
{\em determine whether there is an initial state providing a continuous $T$-periodic solution} $u$; it is also at issue whether this initial condition is the only such one.  As we shall see below, this converts the periodic problem into a static equation for the range
defect of a  period map.  This point of view is classical in monodromy and
Poincar\'e theory, going back at least to
\cite{Massera,Krein1971,KreinMonodromy}, and it appears in the context of several other classical references
\cite{Taam1966,Browder1965,StraskrabaVejvoda1973,Herrmann1980,DanersKochMedina1992}, and \cite{Pruss1984} which was then followed by \cite{Galdi}. 
Closely related ideas also appear in dissipative-wave and almost-periodic
theory \cite{AmerioProuse1971,BiroliHaraux1980,Haraux1983,Haraux2018,ArendtBatty1997},
as well as nonlinear semigroup and monotone operator theory
\cite{Brezis1973,Barbu1976,Bostan2002}, and finally in output regulation and
the internal model principles
\cite{ImmonenPohjolainen2006,PaunonenPohjolainen2010}.
These strands of literature address neighboring questions with different terminology and,
often, with little cross-pollination.  In much of the PDE literature, resonance is
 invoked more often than it is defined, where it can mean: spectral
coincidence, failure of periodic solvability, large forced response, or loss of homogeneous
stability.  Our central motivating reference here,
\cite{Galdi}, gives a particularly clean formulation of resonance, and elucidates its connection with the decay properties of the unforced dynamics; it will function as a central pillar for our analysis here.

Indeed,  \cite{Galdi} shows that if all unforced trajectories decay to zero (strong stability) in the space of finite energy $C([0,\infty); H)$,
then there exists a dense set of forcings inside the mild solution forcing class, $L^1(0,T;H)$, for
which the periodic problem is uniquely solvable. On the other hand, \cite{Galdi} also shows that if the damping is strong enough to yield {\em uniform stability} (prescribed exponential decay rates) for the homogeneous system, then periodic well-posedness is unconditional; that is, it  holds for arbitrary $T>0$ and $f\in L^1(0,T;H)$. Thus the basic picture for partially dissipative systems is delicate, as {\bf non-resonant
forcings are dense, while pathological resonant forcings may exist.} The hard questions
are then how to
identify this dense, periodic well-posedness space, and also how to construct a forcing external to it, yielding resonance. For these fundamental problems, the question may not be whether
periodic well-posedness fails, but how badly it does so and how to measure this.

Derivative losses provide a quantitative language for the aforementioned set of problems.  In
\cite{MMSW2024}, time-periodic solutions for the heat-wave system were
constructed directly through PDE estimates.  The constructed forcing classes exhibited clear Sobolev regularity ``losses"
between the periodic data and the finite-energy solution. This reflected
the need to reconstruct hyperbolic energies from parabolic dissipation, acting only through
the coupling interface. In the context of stability theory, this derivative loss phenomenon is closely associated with so called rational (or polynomial) decay of the solution semigroup. In \cite{GMW2026}, this connection
was isolated abstractly---rational stability of the semigroup
equivalently corresponds to polynomial resolvent growth. Moreover, the resolvent growth exponent
appears in the Fourier expansion of a periodic solution as a loss of time derivatives.
Related results, in different weakly damped frameworks, were obtained in \cite{LaurentRivas2026} based explicitly on prescribed decay rates in the time domain.  These two recent works explain how finite derivative loss can produce explicit, dense periodic well-posedness spaces which are proper inside the mild forcing class.  They do
not, however, explain how resonance may occur inside the remaining range
defect. And these results also raise the question of what can happen in the scenario where finite derivative loss is not observed, and what the implications may therebe for resonance.

This paper answers those questions.  We organize the possible failures
of periodic well-posedness into a hierarchy of  mechanisms. Some of the notions we define are classical under
other names; some are implicit in the semigroup or control literature; and some are apparently
new.  The most important equivalence we establish is that of range failure and necessary growth in time---if there exists a pair $(T,f)$ so that the periodic problem is not solvable, then all resulting mild solutions necessarily have secular-in-time growth. A central contribution, then, is to put all notions in one
place, name them carefully, prove the relevant equivalences and non-equivalences,
and show how they operate in explicit partially dissipative systems.  We will provide specific examples of these phenomena, including a carefully constructed infinite loss scenario. Our examples show that these are truly
separate notions of resonance, and also that the hierarchy is genuinely
infinite-dimensional in nature.  {\em To our knowledge, there is no existing reference that
gives a ``map" of resonance phenomena for infinite-dimensional systems.}

The hardest analysis in the paper is devoted to showing infinite-loss
 in a well-studied partially dissipative PDE system: we construct
a two-dimensional heat-wave PDE geometry with a gross violation of  control-theoretic geometric
conditions that typically ensure propagation of dissipation across the interface.  Our meticulously constructed geometry supports high-frequency, localized, and focusing wave behavior that is
effectively invisible to the interface (and hence does not see the damping). On the resolvent side, this
produces super-polynomial growth along a sampled lattice of frequencies for suitable
periods $T$.  The resulting period is not exhibited explicitly, but a Baire argument
shows that they form a dense $G_\delta$ set, which is at the same time Lebesgue-null.
On the periodic side, for each such period, it implies that no finite
Sobolev loss can control the data-to-solution map.  Combining this ``sampled
resolvent catastrophe" with an abstract ``stacking" theorem, we obtain a
$C^\infty$ smooth time-periodic forcing for which the corresponding mild
solution grows unboundedly. This is the central example and the main payoff of our analysis.  Plainly, {\bf it shows that
damping may eliminate free oscillations---strong stability---without eliminating the possibility of resonance.} We are not aware of comparable constructions in the literature.

The paper proceeds as follows.  Section~\ref{sec:terminology} fixes the
semigroup framework, periodic function spaces, and foundational notions of
periodic well-posedness and resonance.  Section~\ref{sec:notions} describes established notions of resonance and names them. Section~\ref{sec:taxonomy} develops the
taxonomy of resonance mechanisms and then proves the available implication and non-equivalence
results among them. Subsections~\ref{sec:growth} and \ref{sec:loss} connect sampled
resolvent growth to finite and infinite derivative loss, and develop the
Fourier stacking mechanisms for constructing resonant forcings.
Section~\ref{sec:examples} turns to model problems, and
Section~\ref{sec:ellipse} proves the main result: a heat-wave geometry which
exhibits super-polynomial sampled resolvent growth and smooth-forcing secular
resonance.  The final section summarizes the resulting
picture for partially dissipative systems.

\subsection{Precise terminology and foundational notions}\label{sec:terminology}
Let $H$ be a (complex) Hilbert space with norm $\norm{\cdot}$ and let $A:\dom(A)\subseteq H\to H$ be a closed, densely defined operator that generates a \emph{uniformly bounded} $C_0$-semigroup $\bigl(S(t)\bigr)_{t\ge0}$ on $H$ (written $S(t)$ for short) \cite{EngelNagel2000,Pazy1983}. Hence, throughout, we assume 
\begin{equation}\label{eq:bdd}
\norm{S(t)}_{\cL(H)}\le M\qquad\text{for all }t\ge0,
\end{equation}
for a constant $M\ge1$ which will remain fixed. As noted above, we are interested in problems that exhibit {\em some} dissipation. Here we will understand this to yield, at minimum, the \emph{strong stability} of $S(t)$, i.e.  we will assume in this paper that $S(t)x\to0$ in $H$ for every $x\in H$.\footnote{There are dynamics which have such weak dissipation that the semigroup is not strongly stable, but we are not considering such dynamics here.}

Accordingly, we will study the inhomogeneous problem
\begin{equation}\label{eq:main}
\dot u=Au+f(t),
\end{equation}
where $f$ is a given time-periodic forcing with minimal\footnote{By requiring minimality of the period, we exclude constant in time forcing. Constant forcing is a steady-state problem of a different
character, governed by whether $0\in\rho(A)$.} period $T>0$.
For $u_0\in H$ and any $f\in L^1_{\text{loc}}(\mathbb R;H)$, the unique {\em mild solution} associated with \eqref{eq:main} is given by
\begin{equation}\label{eq:mild}
u(t)=S(t)u_0+\int_0^t S(t-s)f(s)\,ds\in C([0,\infty);H).
\end{equation}

We are now in position to define the notion of periodic well-posedness of mild solutions, following the earlier conventions of \cite{Galdi,Pruss1984}.
\begin{definition}\label{def:PWP}
   Let $T>0$ and $\mathcal Q\subset L^1_\#(0,T;H)$. We say that the system \eqref{eq:main} is \emph{periodically well-posed} (PWP) on $\mathcal Q$ (for the period $T$) if, for every $f\in \mathcal Q$, there exists a unique $T$-periodic mild solution $u\in C([0,\infty);H)$ of \eqref{eq:main}.
   When $\mathcal Q = L^1_\#(0,T;H)$ we say that \eqref{eq:main} is {\em unconditionally} PWP.
\end{definition}

Three comments are in order. First, we choose the notational convention that $L^1_\#$ refers to $L^1(0,T)$-type functions that are extended a.e. to $\R$ by $T$-periodicity. Secondly, we omit from the definition the requirement that the solution depends continuously on the forcing, as this is a very subtle and central issue addressed later. Thirdly, implicit in the definition is the recovery of a (unique) initial condition, $u_0$ from the given $f$. Alternatively, one could say that for each $f$ there is a unique $u_0$ such that \eqref{eq:mild} yields a $T$-periodic mild solution. 

We write: $\rho(A)\subseteq \mathbb C$ for the (open) resolvent set of the operator $A$, $\sigma(A)=\mathbb C\setminus\rho(A)$ for the
spectrum, and $\spp(A)$ for the point spectrum (the eigenvalues) of $A$. The
approximate point spectrum of $A$ is given by
\[
\sap(A):=\bigl\{\lambda\in\C:\ \exists\,x_n\in\dom(A),\ \norm{x_n}=1,\
\norm{(A-\lambda I)x_n}\to0\bigr\}
\]
and is the set of $\lambda$ for which $A-\lambda I$ is \emph{not bounded below}; it
contains $\spp(A)$. Writing 
$$\sigma_{\mathrm r}(A):=\{\lambda:\ A-\lambda I\ \text{is bounded below with non-dense range}\}$$
for the residual spectrum of $A$, one
has the disjoint decomposition
\[
\sigma(A)=\sap(A)\ \sqcup\ \sigma_{\mathrm r}(A).
\]
We note a standard fact from functional analysis: $\partial\sigma(A)\subseteq\sap(A)$ \cite{Yosida1995}. Moreover, uniform boundedness of the semigroup provides $\{\operatorname{Re}\lambda>0\}\subseteq
\rho(A)$. Therefore $\sigma(A)\subseteq\{\operatorname{Re}\lambda\le0\}$ and, for the semigroups of interest here, every
imaginary spectral point is a boundary point of $\sigma(A)$, so
\[
\sigma(A)\cap i\R=\sap(A)\cap i\R.
\]
As the resolvent operator will be a central object in our analysis, we recall that 
$$R_{\lambda}(A):=(A-\lambda I)^{-1}: \rho(A) \to \mathcal L(H)~~\text{ (written}~ R_{\lambda} ~ \text{ for short)}.$$ 
We are primarily concerned with the  growth rate of $R_{\lambda}$ when restricted to $i\mathbb R$, in particular, for a fixed period $T>0$ we will focus on the growth of $R_\lambda$ along the lattice $i\frac{2\pi}{T}\Z=:\{i\omega_n : n\in\Z\}$. These \emph{sampled frequencies} will be central to Section~\ref{sec:taxonomy}.

We now turn to the classical notion of resonance, which is characterized by the point spectrum. 
\begin{definition}\label{def:spectral}
The system \eqref{eq:main} is \emph{point-spectrally} (classically)
resonant if $\spp(A)\cap i\R\neq\emptyset$.
\end{definition}
\noindent Definition \ref{def:spectral} is the notion of resonance informed by the finite dimensional setting, where $S(t)$ is given by $e^{At}$ with $A\in\mathcal{M}_n(\C)$. The textbook example is of course {\em the harmonic oscillator}, where $A=\begin{pmatrix}0&1\\-\omega_0^2&0\end{pmatrix}$, with eigenvalues $\spp(A)\cap i\R=\{\pm i\omega_0\}$. 

As we are interested in semigroups $S(t)$ demonstrating strong stability, we note that that property  is incompatible with imaginary
eigenvalues. 
\begin{lemma}
\label{prop:no-classical}
If $(S(t))_{t\ge0}$ is strongly stable then $\spp(A)\cap i\R=\emptyset$, i.e., \eqref{eq:main} is not
point-spectrally resonant.
\end{lemma}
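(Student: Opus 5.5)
The plan is to argue by contradiction, using the standard fact that an eigenvector of the generator produces an explicit orbit of the semigroup. Suppose $i\omega\in\spp(A)$ for some $\omega\in\R$. Then there is $x\in\dom(A)$ with $\norm{x}=1$ and $Ax=i\omega x$. I will show that
\[
S(t)x=e^{i\omega t}x \qquad\text{for all } t\ge0 .
\]
Once this holds, $\norm{S(t)x}=1$ for every $t$, so $S(t)x\not\to0$. This contradicts strong stability, and hence $\spp(A)\cap i\R=\emptyset$.

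The key step is the identity $S(t)x=e^{i\omega t}x$. I will prove it by uniqueness for the abstract Cauchy problem. Put $v(t):=e^{i\omega t}x$. Then $v\in C^1([0,\infty);H)$, $v(t)\in\dom(A)$ for all $t$, and
\[
\dot v(t)=i\omega e^{i\omega t}x=Av(t), \qquad v(0)=x .
\]
So $v$ is a classical solution of $\dot u=Au$ with initial datum $x$. Classical solutions of a Cauchy problem governed by a $C_0$-semigroup generator are unique \cite{Pazy1983,EngelNagel2000}, and $S(\cdot)x$ is such a solution, so $v(t)=S(t)x$.

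For a self-contained argument, I would instead fix $t>0$ and set $w(s):=S(t-s)v(s)$ for $s\in[0,t]$. Since $v(s)\in\dom(A)$, differentiating gives
\[
\dot w(s)=-S(t-s)Av(s)+S(t-s)\dot v(s)=0 .
\]
Thus $w(t)=w(0)$, which is exactly $v(t)=S(t)x$.

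Justifying this derivative (the product rule for a strongly continuous family applied to a $C^1$ curve in $\dom(A)$) is the only technical point. It is routine, so no step here is a genuine obstacle. Note that uniform boundedness \eqref{eq:bdd} is not even needed for this argument; only strong stability is used.
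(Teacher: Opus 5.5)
Your proof is correct and follows the paper's proof: an imaginary eigenvector $\phi$ gives $S(t)\phi=e^{i\omega t}\phi$, so $\norm{S(t)\phi}=\norm{\phi}\not\to0$, contradicting strong stability. The paper asserts the identity $S(t)\phi=e^{i\omega t}\phi$ without proof; your justification of it, via uniqueness of classical solutions or by differentiating $S(t-s)v(s)$, is a sound addition.
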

\begin{proof}
If $A\phi=i\omega\phi$ with $\omega\in\R$ and $\phi\neq0$, then we have
$S(t)\phi=e^{i\omega t}\phi$, whence $\norm{S(t)\phi}=\norm{\phi}$ for all $t$. But this  implies
that $S(t)\phi\not\to0$, contradicting the definition of strong stability of $S(t)$.
\end{proof}
We note that the Arendt-Batty stability theorem \cite{ArendtBatty1988} provides sufficient conditions for strong stability, one of which is the absence of imaginary point spectra.
 In line with these observations,  the only available mechanism for {\em spectral obstruction} on $i\mathbb R$ is through $\sap(A)$. We will characterize this as one of the non-classical resonance mechanisms in the sequel---{\em spectral resonance}. 
 
We now turn to another point of view  eliciting a static perspective for PWP. Imposing the $T$-periodicity requirement, $u(T)=u(0)=u_0$ (assuming such $u_0$ exists) in \eqref{eq:main}, we obtain
\begin{equation}\label{kick}
(I-S(T))u_0=\int_0^TS(T-s)f(s)ds =: b_f.
\end{equation}
The central perspective in \cite{Galdi} concerns the solvability of the above equation. Namely, given $f \in L^1_{\#}(0,T;H)$, we ask whether there is a $u_0 \in H$ (or some smoother space) satisfying \eqref{kick}. From the operator point of view, we see immediately that the null space and range of the bounded linear operator $(I-S(T))$ are at issue, and will be discussed in the sequel. 

In the next section we will present some additional established notions of resonance, and describe their connections. As we have discussed above, there will be two central themes. The first concerns the spectral (hence resolvent) behaviors of the semigroup generator $A$ realized on the state space $H$ through  the variation of parameters formula in \eqref{eq:main}. The second theme concerns the range (deficiency) of the operator $(I-S(T))$ for chosen $T$-values corresponding to  forcings $f$ in the mild class $L^1_{\#}(0,T;H)$ giving rise to the ``kick" $b_f$.

\subsection{Literature review and established notions for PWP and resonance}\label{sec:notions}

With the notions of the previous section, we may move on to some established notions of resonance in the infinite dimensional setting. We will connect them  to {\em stability} properties of the semigroup $S(t)$. The notion of point-spectral resonance defined before is, in some sense, purely algebraic. The growth associated with the eigenvalue is robust with respect to the topology (norm) in which we frame the operator. On the other hand, the notions of resonance  presented here and in the next section are essentially dependent on topological choices. Moreover, the infinite dimensional nature of the problem on arbitrary Hilbert spaces allows for many different and complex behaviors of the spectrum $\sigma(A)$ and the associated resolvent operator. 

We will now recall the notion of resonance in \cite{Galdi} and connect it to our discussion of static solvability in \eqref{kick}.  
\begin{definition}[\cite{Galdi}]
\label{def:galdi}
The system \eqref{eq:main} is \emph{statically resonant} if there exist a
period $T>0$ and a $T$-periodic forcing $f\in L_{\#}^1(0,T;H)$ such that \eqref{eq:main} admits no $T$-periodic mild
solution.  The pair $(T,f)$ as above is then a  \emph{statically resonant pair}.
\end{definition}
 For the next part of our discussion, we invoke Yosida's Mean Ergodic Theorem, as applied to our semigroup $S(t)$. We state this as a lemma, combining \cite[pp.213--214]{Yosida1995} and \cite[p.225]{Galdi}.
\begin{lemma}\label{lem:MET}
Suppose that $A$ generates the $C_0$-semigroup $\big(S(t)\big)_{t\ge 0}$ on the Hilbert space $H$ and that it is uniformly bounded. Then for each $T>0$:
$$H = \mathcal N(I-S(T)) \oplus \overline{\mathcal R(I-S(T))}.$$
Moreover, if $S(t)$ is strongly stable on $H$, then $\mathcal N(I-S(T))=\{0\}$ and 
$$H = \overline{\mathcal R(I-S(T))}.$$
\end{lemma}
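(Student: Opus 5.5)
We set $V:=S(T)$, a bounded operator on $H$ with $\sup_{k\ge0}\norm{V^k}\le M$ by \eqref{eq:bdd}. The plan is to prove the mean ergodic decomposition for this power-bounded operator directly in the Hilbert space setting, and then to use strong stability to rule out fixed points.

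The first step is the standard duality identity
\[
\overline{\mathcal R(I-V)} = \mathcal N(I-V^*)^{\perp}.
\]
It therefore suffices to show $\mathcal N(I-V)\cap\overline{\mathcal R(I-V)}=\{0\}$ and that $\mathcal N(I-V)+\overline{\mathcal R(I-V)}$ is all of $H$. For this we introduce the Cesàro means
\[
C_n:=\frac1n\sum_{k=0}^{n-1}V^k,\qquad \norm{C_n}\le M .
\]
If $x=y-Vy$, then the sum telescopes to $C_nx=\frac1n(y-V^ny)$, so $\norm{C_nx}\le 2M\norm{y}/n\to0$. Because the $C_n$ are uniformly bounded, this convergence $C_nx\to0$ extends to all $x\in\overline{\mathcal R(I-V)}$. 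On the other hand, $C_nx=x$ whenever $x\in\mathcal N(I-V)$. Together these two facts give the trivial intersection.

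The main obstacle is showing that the sum is all of $H$. The first route is to use the reflexivity of $H$. For a given $x\in H$, the sequence $(C_nx)$ is bounded, so it has a weakly convergent subsequence $C_{n_j}x\rightharpoonup z$. From the identity $(I-V)C_n x=\frac1n(x-V^nx)\to0$ and the weak continuity of $I-V$, we get $z\in\mathcal N(I-V)$. Next, $x-C_nx=\frac1n\sum_{k=0}^{n-1}(I-V^k)x$ lies in $\mathcal R(I-V)$, because $I-V^k=(I-V)(I+\dots+V^{k-1})$. Since a closed subspace is weakly closed, we conclude $x-z\in\overline{\mathcal R(I-V)}$. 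Hence $x=z+(x-z)$ lies in the sum, and the direct sum decomposition follows. (This is the version of Yosida's argument that the lemma cites; one could equally use the continuous-time means $\frac1t\int_0^tS(s)\,ds$, but the discrete means suffice for a fixed $T$.)

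Finally, assume $S(t)$ is strongly stable and let $x\in\mathcal N(I-S(T))$. Then $S(kT)x=x$ for every $k\in\N$, while strong stability gives $S(kT)x\to0$. Hence $x=0$, so $\mathcal N(I-S(T))=\{0\}$, and the decomposition reduces to $H=\overline{\mathcal R(I-S(T))}$.
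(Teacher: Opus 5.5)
Your proof is correct, and it is exactly the Ces\`aro-mean and weak-compactness argument behind Yosida's mean ergodic theorem; the paper does not reproduce a proof but cites Yosida for the decomposition and Galdi for the strongly stable case, where your argument $x=S(kT)x\to0$ is the standard one. Two cosmetic remarks: the duality identity $\overline{\mathcal R(I-V)}=\mathcal N(I-V^*)^{\perp}$ you open with is never used, and you are right to prove $\oplus$ only as a direct sum of closed subspaces, since for a merely uniformly bounded (non-contractive) semigroup the two pieces need not be orthogonal.
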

Rewriting \eqref{kick} here, for given $T>0$ and $f \in L^1_{\#}(0,T;H)$ giving rise to $b_f \in H$, we consider
$$(I-S(T))u_0 = b_f \in H.$$
Thence, \cite[Theorem 3.3]{Galdi} asserts that static resonance occurs exactly when $\mathcal R(I-S(T))$ is not closed, and then one can find $b_f$ which is not in $\mathcal R(I-S(T))$. Furthermore, \cite[Proposition 3.3]{Galdi} states that when $S(t)$  is {\em uniformly stable} on $H$ (i.e., $||S(t)||_{\mathcal L(H)} \to 0$), then $\mathcal R(I-S(T))=H$ for all $T$. From uniform stability of $S(t)$, then, one obtains  {\em unconditional} PWP on $L^1_{\#}(0,T;H)$.

As we are interested in cases of partial dissipation/weak dissipation, it is often the case that models under consideration are {\bf strongly stable but  not uniformly stable}, as for instance in the case of our  motivating heat-wave example presented in Section \ref{sec:heatwave}. A methodological asymmetry motivates this observation: strong
stability is usually \emph{accessible}, following from the
Arendt-Batty criterion \cite{ArendtBatty1988} through the absence of imaginary
spectra. The primary tool there is usually a unique-continuation (Holmgren-type) results for
the underlying PDE system. On the other hand, determining that a model is or is not uniformly
stable can be \emph{intractable}, requiring very precise control of
the resolvent on $i\R$, with  no ``soft" pathway analogous to
Holmgren. Thus, in most partially dissipative systems, one knows strong
stability but cannot easily decide uniform stability.  Forcing-based
resonance criteria are valuable in exactly this situation.
In our primary example, however, uniform stability genuinely fails. Indeed, for the heat-wave system it is excluded in essentially every geometric configuration by the Gaussian beam construction of \cite[Section 6]{ZhangZuazua2007}, as recalled in Section~\ref{sec:heatwave}. 

In the scenario where all that is known is strong stability of $S(t)$, \cite[Theorem 3.4 and Corollary 3.1]{Galdi} give a conditional and non-constructive PWP result. 
\begin{proposition}[Theorem 3.4 in \cite{Galdi}] \label{prop:Galdi}  Let $\big(S(t)\big)_{t \ge 0}$ be a bounded semigroup on the Hilbert space $H$. If $S(t)$ is strongly stable, then for all $T>0$ there exists a dense subset $\mathcal Q_T\subset L^1_{\#}(0,T;H)$ such that \eqref{eq:main} is PWP on $\mathcal Q_T$.
\end{proposition}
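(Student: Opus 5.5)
The plan is to reduce periodic well-posedness to the static equation \eqref{kick} and identify a dense set of forcings whose kicks $b_f$ land in $\mathcal R(I-S(T))$. Fix $T>0$. By Lemma~\ref{prop:no-classical}-type reasoning, together with the second part of Lemma~\ref{lem:MET}, strong stability gives $\mathcal N(I-S(T))=\{0\}$. So for any $f$ with $b_f\in\mathcal R(I-S(T))$ there is exactly one $u_0$ solving $(I-S(T))u_0=b_f$. The mild solution \eqref{eq:mild} from this $u_0$ then satisfies $u(T)=u_0$. Since $f$ is $T$-periodic, the cocycle identity $u(t+T)=S(t)u(T)+\int_0^t S(t-s)f(s+T)\,ds$ shows that $u(\cdot+T)$ and $u$ solve the same Cauchy problem with the same initial datum, hence $u$ is $T$-periodic. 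Uniqueness of the periodic solution is immediate: two periodic solutions differ by a $T$-periodic trajectory $S(t)w$, which forces $(I-S(T))w=0$ and so $w=0$. It therefore suffices to take
\[
\mathcal Q_T:=\{f\in L^1_\#(0,T;H):\ b_f\in\mathcal R(I-S(T))\}
\]
and prove that it is dense.

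For density I would exhibit an explicit dense subclass. The natural candidates are forcings whose kick is manufactured to lie in the range. Take $f(s)=\phi'(s)\,x - \phi(s) A x$ with $x\in\dom(A)$ and $\phi$ smooth. More simply, take $f=\dot v-Av$ for a smooth $T$-periodic $v$ with values in $\dom(A)$. Then $v$ itself is a classical $T$-periodic solution, so $f\in\mathcal Q_T$ trivially. The task becomes showing that $\{\dot v-Av:\ v\in C^1_\#([0,T];\dom(A))\}$ is dense in $L^1_\#(0,T;H)$. This is the main obstacle, since $A$ is not invertible on $i\R$ in general, so the range of the periodic operator $\partial_t-A$ is not obviously dense.

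For this step I would use Fourier series in time. Trigonometric polynomials $\sum_{|n|\le N} e^{i\omega_n t}h_n$ with $h_n\in H$ and $\omega_n=2\pi n/T$ are dense in $L^1_\#(0,T;H)$ via Fejér means, so it suffices to approximate each $e^{i\omega_n t}h$. Such a term equals $\dot v-Av$ for $v=e^{i\omega_n t}x$ exactly when $h=(i\omega_n-A)x$. Hence we need $\mathcal R(A-i\omega_n I)$ to be dense in $H$ for each $n$. Its orthogonal complement is $\mathcal N(A^*+i\omega_n I)$. An eigenvector $A^*\psi=-i\omega_n\psi$ would give $S(T)^*\psi=e^{-i\omega_nT}\psi=\psi$, so $\psi\in\mathcal N(I-S(T)^*)$. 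Lemma~\ref{lem:MET}, via the mean ergodic splitting, identifies $\mathcal N(I-S(T)^*)$ with $\mathcal R(I-S(T))^\perp=\{0\}$, so $\psi=0$.

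Consequently the ranges are dense. Approximating $h_n$ by $(i\omega_n-A)x_n$ in $H$ yields approximants of the trigonometric polynomial in $L^\infty$, hence in $L^1$, all lying in $\mathcal Q_T$. This gives the density of $\mathcal Q_T$ and completes the argument.
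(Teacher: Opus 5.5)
Your proof is correct, but it is not what the paper does. The paper gives no argument of its own: it cites Galdi's Theorem~3.4 and describes that result as non-constructive. Its only in-house Fourier realization of a dense PWP class, Proposition~\ref{prop:weighted-pwp}(i), needs the extra hypothesis $i\omega_n\in\rho(A)$ for all $n$, so that every $H$-valued trigonometric polynomial is admissible.

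You avoid that hypothesis by restricting the $n$-th coefficient to $\mathcal R(i\omega_nI-A)$. You then get density of these ranges from density of $\mathcal R(I-S(T))$ (Lemma~\ref{lem:MET}). The result is an explicit dense subclass of $\mathcal Q_T$, under strong stability alone, whose periodic responses are $\dom(A)$-valued trigonometric polynomials and hence classical solutions. In effect your argument extends Proposition~\ref{prop:weighted-pwp}(i) to the full generality of Proposition~\ref{prop:Galdi}.

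An abstract alternative is shorter but produces no concrete class. The map $f\mapsto b_f$ is a bounded surjection of $L^1_\#(0,T;H)$ onto $H$. This is essentially the Pr\"uss trick the paper mentions; it also follows by duality, since the adjoint map $x\mapsto S(T-\cdot)^*x$ satisfies $\sup_{0\le s\le T}\norm{S(s)^*x}\ge\norm{x}$. By the open mapping theorem the map is open, so the preimage $\mathcal Q_T$ of the dense subspace $\mathcal R(I-S(T))$ is dense.

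Two minor points in your write-up. First, the identity $\mathcal N(I-S(T)^*)=\mathcal R(I-S(T))^\perp$ is just $\mathcal N(B^*)=\mathcal R(B)^\perp$; Lemma~\ref{lem:MET} is needed only for density of the range. Second, you can skip adjoints altogether: the identity $(i\omega_nI-A)\int_0^Te^{-i\omega_ns}S(s)x\,ds=(I-S(T))x$ gives $\mathcal R(I-S(T))\subset\mathcal R(i\omega_nI-A)$ directly.
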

In this strongly stable case, one would like a systematic ways of exploring $H\setminus \mathcal R(I-S(T))$, but, in general, this can be challenging. Here topological choices already matter---the choice of the finite energy space $H$, i.e., where the semigroup operates and the forcing acts, is central. Additionally, characterizing this range may not be direct, but in several examples below it is possible to find dense subsets of $L^1_{\#}(0,T;H)$ living inside of $\mathcal R(I-S(T))$; these will provide viable PWP spaces for \eqref{eq:main}, even if they are not maximal, i.e., identified with $\mathcal Q_T$ in the Proposition \ref{prop:Galdi}.
So, the density but non-closedness of $\mathcal R(I-S(T))$ yields the existence of a space where PWP holds, with static resonance occurring inside the range deficiency $H\setminus \mathcal R(I-S(T))$. 

In recent work \cite{GMW2026,LaurentRivas2026}, some dense PWP spaces were constructed through the concept of {\em rational stability} (also referred to as {\em polynomial} or {\em almost-uniform} stability). Rational stability exists as an intermediate notion between strong and uniform stability, and we now provide the definition and a well-known characterization from \cite{BorichevTomilov2010}.
\begin{theorem}\label{polystab}
Let $\alpha>0$. Then the following are equivalent:
\begin{itemize}
\item~{\it [Rational stability]} $0\in\rho(A)$ and
\[
   \norm{S(t)A^{-1}}_{\cL(H)}
   \le C(1+t)^{-1/\alpha},\qquad t\ge 0;
\]
\item~{\it [Polynomial resolvent growth]} $i\R\subset\rho(A)$ and
\[
   \norm{(isI-A)^{-1}}_{\cL(H)}
   \le C(1+|s|)^{\alpha},\qquad s\in\R .
\]
\end{itemize}
\end{theorem}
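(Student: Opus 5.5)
We prove the Borichev--Tomilov equivalence in Theorem~\ref{polystab}, using the Hilbert space structure of $H$ and the uniform bound \eqref{eq:bdd}. The plan has two directions of unequal difficulty. The implication from rational stability to polynomial resolvent growth is soft. The converse is the substantive (and sharp) part, where the Hilbert space setting is essential.

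\emph{Decay $\Rightarrow$ resolvent bound.} Fix $s\in\R$ and write, for $\operatorname{Re}\lambda>0$,
\[
R(\lambda,A)x=\int_0^\tau e^{-\lambda t}S(t)x\,dt+e^{-\lambda\tau}R(\lambda,A)S(\tau)x .
\]
Apply this to $x=A^{-1}y$. The tail $S(\tau)A^{-1}$ has norm at most $C(1+\tau)^{-1/\alpha}$. A Neumann-series argument then shows that $i\R\subset\rho(A)$, which is Batty's observation that decay of $S(t)A^{-1}$ excludes imaginary spectrum. To get the bound, I would use the standard estimate $\norm{R(is,A)A^{-1}}\lesssim \tau+\norm{R(is,A)}\,\norm{S(\tau)A^{-1}}$. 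I would combine it with $R(is,A)=A^{-1}\bigl(I+isR(is,A)\bigr)$, i.e.\ $R(is,A)=(is)^{-1}\bigl(AR(is,A)-I\bigr)$. Choosing $\tau\sim |s|^{\alpha}$ with a large constant makes $\norm{S(\tau)A^{-1}}\,|s|\le 1/2$, so the unknown term can be absorbed. This gives $\norm{R(is,A)}\lesssim(1+|s|)^{\alpha}$. Large $|s|$ is handled this way, and bounded $|s|$ follows from continuity of the resolvent on the compact set.

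\emph{Resolvent bound $\Rightarrow$ decay.} First reduce: by the resolvent identity, $\norm{S(t)A^{-1}}\le Ct^{-1/\alpha}$ is equivalent to $\norm{S(t)A^{-\alpha}}\le C t^{-1}$, via the moment inequality for fractional powers and the semigroup property. This is the Batty--Duyckaerts / Borichev--Tomilov reduction. To prove the latter, set $x(t)=S(t)A^{-\alpha}y$. Insert a smooth cutoff in time and take Fourier transforms. Then $\hat x$ is expressed through $R(is,A)A^{-\alpha}$, which is uniformly bounded on $i\R$ by the growth assumption together with $\norm{AR(is,A)}\lesssim 1+|s|\,\norm{R(is,A)}$ and interpolation. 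Plancherel in $L^2(\R;H)$, which is valid because $H$ is Hilbert, converts these uniform resolvent bounds into $L^2$-in-time bounds of the form $\int_0^\infty\norm{tS(t)A^{-\alpha}y}^2\,dt$-type estimates. This is the Gearhart--Pr\"uss mechanism, applied to the derivative $\frac{d}{ds}R(is,A)A^{-\alpha}=R(is,A)^2A^{-\alpha}$. Finally, use the uniform bound $M$ to pass from $L^2$-averaged decay to pointwise decay: write $tS(t)=\int_0^t S(t-r)S(r)\,dr$ and apply Cauchy--Schwarz with the dual (adjoint semigroup) estimate.

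The main obstacle is that last step, and specifically obtaining the sharp rate $t^{-1/\alpha}$ with no logarithmic loss. Batty--Duyckaerts give the result only up to a $\log$ factor in Banach spaces. Removing the log needs the Plancherel argument with the squared resolvent, so the precise Borichev--Tomilov Fourier-multiplier bookkeeping is where care is needed. If this route stalls, I would instead cite \cite{BorichevTomilov2010} directly for this direction, since the statement is presented as a known characterization.
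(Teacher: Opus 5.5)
The paper does not prove this theorem; it quotes it from \cite{BorichevTomilov2010}. Your fallback of citing that paper is therefore exactly the authors' route. Your self-contained sketch, however, has a genuine gap in the direction resolvent bound $\Rightarrow$ decay, and it sits precisely at the sharp exponent.

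Your interpolation step does give $\sup_{\operatorname{Re}\lambda\ge0}\norm{(\lambda I-A)^{-1}A^{-\alpha}}<\infty$. But the Laplace transform of $tS(t)A^{-\alpha}y$ is $(\lambda I-A)^{-2}A^{-\alpha}y$, and only one of its resolvent factors is tamed by $A^{-\alpha}$. Its norm on $i\R$ can be of size $\abs{s}^{\alpha}$, and the bound $\int_0^\infty\norm{tS(t)A^{-\alpha}y}^2\,dt\lesssim\norm{y}^2$ you want from it is false. Take $H=\ell^2$ and $Ae_n=\lambda_ne_n$ with $\lambda_n=-n^{-\alpha}+in$. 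All hypotheses hold, yet $\int_0^\infty t^2\norm{S(t)A^{-\alpha}e_n}^2\,dt\approx n^{\alpha}/4\to\infty$. Inserted into your Cauchy--Schwarz step, such a bound would even give $\norm{S(t)A^{-\alpha}}=O(t^{-3/2})$, which fails in this example.

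What Plancherel and the resolvent identity genuinely provide is $\int_0^\infty\norm{S(t)A^{-\alpha}y}^2\,dt\lesssim\norm{y}^2$, and the same for $S(t)^*$. Combined with $tS(t)A^{-2\alpha}=\int_0^tS(t-r)A^{-\alpha}\,S(r)A^{-\alpha}\,dr$ and Cauchy--Schwarz, this yields only $\norm{S(t)A^{-2\alpha}}=O(t^{-1})$, i.e.\ $\norm{S(t)A^{-1}}=O(t^{-1/(2\alpha)})$. That is half the required exponent. Working with $r^{k-1}S(r)A^{-k\alpha}$ instead gives $t^{-k/((k+1)\alpha)}$, which approaches but never reaches $t^{-1/\alpha}$. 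Closing the argument along these lines would need square-function bounds such as $\int_0^\infty\norm{S(t)(-A)^{-\alpha/2}y}^2\,dt\lesssim\norm{y}^2$ for $S$ and $S^*$. The uniform resolvent bound does not deliver these, since $\norm{(isI-A)^{-1}(-A)^{-\alpha/2}}$ is typically of size $\abs{s}^{\alpha/2}$. The sharp exponent is the genuinely nontrivial content of \cite{BorichevTomilov2010} (see also \cite{RozendaalSeifertStahn2019}), and your sketch does not supply it.

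The easy direction also contains a fixable bookkeeping slip. Write $R(\lambda):=(\lambda I-A)^{-1}$. Combining $\norm{R(is)A^{-1}}\lesssim\tau+\norm{R(is)}\,\norm{S(\tau)A^{-1}}$ with $\norm{R(is)}\le\norm{A^{-1}}+\abs{s}\,\norm{R(is)A^{-1}}$ multiplies the $\tau$-term by $\abs{s}$ as well. Absorption then gives only $\norm{R(is)}\lesssim\abs{s}\tau\sim\abs{s}^{1+\alpha}$.

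Instead, apply your identity to $x$ itself and write $R(\lambda)S(\tau)=S(\tau)A^{-1}\,AR(\lambda)=S(\tau)A^{-1}\bigl(\lambda R(\lambda)-I\bigr)$. This gives $\norm{R(\lambda)}\le M\tau+\norm{S(\tau)A^{-1}}\bigl(1+\abs{\lambda}\,\norm{R(\lambda)}\bigr)$ for $\operatorname{Re}\lambda>0$, where only the unknown term carries $\abs{\lambda}$. Choose $\tau\sim(1+\abs{s})^{\alpha}$ so that $(1+\abs{s})\norm{S(\tau)A^{-1}}\le\frac12$. Then $\norm{R(\sigma+is)}\le 2(M\tau+C)$ uniformly in $\sigma\in(0,1]$, with $C$ the constant of the decay hypothesis. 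This yields both $is\in\rho(A)$ and $\norm{(isI-A)^{-1}}\lesssim(1+\abs{s})^{\alpha}$.
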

\noindent In \cite{GMW2026}, it is shown that rational stability of $S(t)$ gives rise to an $L^1_{\#}(0,T;H)$ dense set of the form $H^m_{\#}(0,T;H)$ (where $m>\alpha +1$) on which PWP is obtained.\footnote{These are $H^m$-type spaces, extended periodically to $\mathbb R$; as the index is greater than one, these functions are continuous in time, and hence necessarily exhibit $u(0)=u(T)$.} The reference \cite{LaurentRivas2026} also obtains a similar connection, under decay hypotheses of the semigroup in the time domain. 

The idea that resolvent growth yields PWP spaces was indirectly motivated to the authors by the recent work \cite{MMSW2024}, where the heat-wave dynamics produced ``derivative losses"  observed in constructing periodic solutions through boundary observability inequalities. This yielded well-posedness results for mild solutions, posed with data on $H^m_{\#}$ type spaces, motivating a broader connection between PWP and data-to-solution derivative loss. The work in \cite{ZhangZuazua2003,RauchZhangZuazua2005,Duyckaerts2007} demonstrates that heat-wave dynamics are in fact rationally stable, and this is shown through observability-type estimates, precisely measuring derivative losses. Owing to the equivalence in Theorem \ref{polystab}, the derivative loss is captured through resolvent growth, which was directly exploited to produce the result in \cite{GMW2026}. It therefore became clear that derivative loss is a central aspect of PWP for \eqref{eq:main}, and that violations thereof---for instance static resonance---may result from bad behavior associated to such losses.

Finally, we turn to our most application-driven definition of resonance: unbounded-in-time growth. In particular, we are interested in the situation where a given $f \in L^1_{\#}(0,T;H)$ yields a mild solution $u \in C([0,\infty);H)$ which is not bounded. 
\begin{definition}\label{def:secular}
The system \eqref{eq:main} is \emph{secularly resonant} if there exist a $T>0$, and a $T$-periodic forcing
$f\in L^1_{\#}(0,T;H)$, such that some mild solution
$u\in C([0,\infty);H)$ of \eqref{eq:main} is unbounded:
\begin{equation}\label{eq:unbounded}
\sup_{t\ge0}\norm{u(t)}_{H}=\infty . 
\end{equation}
A pair $(T,f)$ as above is a \emph{secularly resonant pair}.
\end{definition}
This is the central definition of resonance for us:  notions introduced in Section~\ref{sec:taxonomy} are calibrated against it, through (non-)implications.
It is straightforward to observe that, when \eqref{eq:main} is PWP, then a given $f \in L^1_{\#}(0,T;H)$ cannot yield an initial condition $u_0 \in H$ which results in an unbounded solution. Thus, PWP precludes secular resonance. In \cite{LaurentRivas2026} this is shown in detail in a broader context. We state here that, for a uniformly bounded and strongly stable semigroup: If $f$ gives rise to a unique periodic solution emanating from $u_0$, then any other initial data $\tilde u_0$ gives rise to a bounded trajectory that must converge to the periodic solution as $t\to\infty$ ($||S(t)u_0-S(t)\tilde u_0|| \le M ||u_0-\tilde u_0||$, and, in fact, $S(t)[u_0 - \tilde u_0] \to 0 \in H$). 

We now informally state the central resonance equivalence: For a partially dissipative system \eqref{eq:main}, where the associated semigroup $S(t)$ is uniformly bounded and strongly stable, 
\begin{center} {\bf secular resonance is equivalent to static resonance}.
\end{center} 
This equivalence is most closely associated with the  work of Massera \cite{Massera}, which will be described in  detail below. Although versions of this result have appeared before in various contexts \cite{DanersKochMedina1992,KreinMonodromy,Krein1971}, and is motivated by finite dimensional work coming from the monodromy community, we believe its use is novel here. We forgo the proof until the latter section where equivalences are established and proved, but we do mention some corollaries now. 

First, constructing an unbounded mild solution is reduced to finding any element of $H$ outside of $\mathcal R(I-S(T))$, for some $T>0$. To wit, we know that resonance is possible by simply finding something outside of this range. Moreover, \cite[p.222]{Galdi} describes a ``trick" (attributed to J. Pr\"uss) that allows one to construct a forcing $f_b \in L^1_{\#}(0,T;H)$ which gives rise to any element $b \in H\setminus \mathcal R(I-S(T))$. Secondly, if one can show (by any means) that the periodic problem is {\em not solvable} for some $f \in L^1_{\#}(0,T;H)$, then automatically the corresponding mild solution $u \notin L^{\infty}(0,\infty;H)$; this result will be central to our arguments concerning infinite derivative loss in the sequel. And finally, we note that in a situation when the semigroup $S(t)$ is strongly stable but not uniformly stable, we are assured that resonance is possible, since, as above, there are necessarily $b \in H\setminus \mathcal R(I-S(T))$. Indeed this follows from the following result:
\begin{proposition}
Let $S(t)$ be a strongly stable $C_0$ semigroup such that $\mathcal{R}(I-S(T))=H$ for every $T>0$. Then $S(t)$ is uniformly stable. 
\end{proposition}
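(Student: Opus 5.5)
Fix a single $T>0$; surjectivity of $I-S(T)$ for this one period is all I will need. Strong stability together with Lemma~\ref{lem:MET} gives $\mathcal N(I-S(T))=\{0\}$. Combined with $\mathcal R(I-S(T))=H$, the bounded operator $I-S(T)$ is then a bijection of $H$. By the open mapping theorem its inverse is bounded, $\norm{(I-S(T))^{-1}}\le K$. The plan is to turn this into the statement that the spectral radius $r(S(T))$ is strictly less than $1$. Uniform stability then follows by the standard argument: $r(S(T))<1$ gives $\norm{S(nT)}=\norm{S(T)^n}\le C q^n$ for some $q<1$, and combined with $\norm{S(t)}\le M$ on $[0,T]$ this yields exponential decay of $\norm{S(t)}$.

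To get $r(S(T))<1$, I will show $\sigma(S(T))\cap\{|z|=1\}=\emptyset$. By \eqref{eq:bdd} we have $r(S(T))\le 1$, and $\sigma(S(T))$ is compact, so this suffices. Since $\sigma(S(T))\subset\{|z|\le1\}$, every point of $\sigma(S(T))$ on the unit circle is a boundary point of the spectrum and hence lies in $\sap(S(T))$. Suppose then that $z=e^{i\theta}\in\sap(S(T))$. I will reduce to the case $z=1$ by rescaling. The semigroup $\tilde S(t)=e^{-i\theta t/T}S(t)$ is again uniformly bounded and strongly stable, and it satisfies $\tilde S(T)=e^{-i\theta}S(T)$. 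So $1\in\sap(\tilde S(T))$, which means $I-\tilde S(T)$ is not bounded below.

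This is where the hypothesis \emph{for every} $T>0$ must enter. The hypothesis concerns $S$, not the rotated semigroup $\tilde S$, so I need to relate the two.

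\textbf{Main step (key obstacle).} I expect this to be the main obstacle. The tool I would use is the spectral inclusion/mapping theorem for $C_0$-semigroups: $\sap(S(T))\setminus\{0\}\supseteq e^{T\sap(A)}$, together with the partial converse that if $e^{i\theta}\in\sigma(S(T))$ then some $i(\theta+2\pi k)/T$ is in $\sigma(A)$, or else the spectral points accumulate in a controlled way. For a general $C_0$-semigroup this converse fails. I would therefore avoid it and argue directly with unit vectors $x_n$ satisfying $\norm{(I-e^{-i\theta}S(T))x_n}\to0$.

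Take $T'$ with $T=mT'$ chosen appropriately, or an irrational multiple of $T$. Write
\[
I-S(T)^N=(I-S(T))\sum_{j<N}S(jT).
\]
With $x=S(T)x_n\approx e^{i\theta}x_n$, this gives $S(NT)x_n\approx e^{iN\theta}x_n$ in norm, with error $\le NM\varepsilon_n$. Strong stability does not contradict this, because $x_n$ varies with $n$. So instead I will exploit surjectivity at other periods. Choose $T'=kT/\ell$ so that $e^{i\theta}$ is approximated by roots of unity, specifically $e^{i\theta\ell}$ close to $1$. The bound $\norm{(I-S(T'))^{-1}}\le K_{T'}$ must be played against the approximate eigenvector, and the difficulty is that $K_{T'}$ is not uniform in $T'$. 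For this reason I would first try Baire category. The sets
\[
E_K=\{T\in[a,b]:\ \norm{(I-S(T))^{-1}}\le K\}
\]
are closed by strong continuity: if $T_j\to T$ and $\norm{y}\le K\norm{(I-S(T_j))y}$ for all $y$, the inequality passes to the limit. Their union is all of $[a,b]$, so some $E_K$ contains an interval $J$. Then a uniform bound holds on $J$, and hence on $J+J+\dots$ via factorizations like $I-S(T_1+T_2)=(I-S(T_1))+S(T_1)(I-S(T_2))$. This uniform resolvent bound for $I-S(t)$ over an interval of $t$ is what I will use to exclude approximate unimodular eigenvalues: for an approximate eigenvector at $e^{i\theta}$, some $t\in J$ makes $e^{i\theta t/T}$ arbitrarily close to $1$, since a nontrivial interval sweeps a full angle after dilation. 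This contradicts $\norm{(I-S(t))^{-1}}\le K$, so $\sigma(S(T))$ misses the unit circle and uniform stability follows as described above.
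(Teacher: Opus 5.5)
Your Baire step is sound. Since each $I-S(t)$ is bijective, the sets $E_K$ are closed by strong continuity, so some $E_K$ contains an interval $J=[\alpha,\beta]$ on which $\norm{(I-S(t))^{-1}}\le K$. The extension to $J+J+\cdots$ does not follow from $I-S(T_1+T_2)=(I-S(T_1))+S(T_1)(I-S(T_2))$: a sum of boundedly invertible operators need not be invertible (take $S(t)=e^{i\pi t}$ on $\C$ and $T_1=T_2=1$). You do not need that extension, however.

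\textbf{The gap is the last step.} A unit vector $x_n$ with $\norm{(S(T)-e^{i\theta})x_n}\to0$ controls $S(jT)x_n$ only for integers $j$. Nothing makes it an approximate eigenvector of $S(t)$ for $t\in J$. Moreover, ``$e^{i\theta t/T}$'' is not well defined: $\theta$ is fixed only modulo $2\pi$, and different lifts $\theta+2\pi k$ give different values $e^{i(\theta+2\pi k)t/T}$. For instance, if $Ae_1=i\mu_1e_1$ and $Ae_2=i\mu_2e_2$ with $0\neq\mu_1-\mu_2\in\frac{2\pi}{T}\Z$, then $(e_1+e_2)/\sqrt2$ is an exact eigenvector of $S(T)$ but of no $S(t)$ with $t(\mu_1-\mu_2)\notin2\pi\Z$.

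So your final sentence silently uses the spectral mapping property you set out to avoid. The ``full angle'' claim has the same defect. For a fixed $\theta$ the arc $\{\theta t/T: t\in J\}$ can be short, and you would need to know which large lift the approximate eigenvector belongs to. Apart from the mean ergodic theorem, your argument never uses that $H$ is a Hilbert space, and that structure is what makes the needed characterization of $\rho(S(T))$ available.

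\textbf{The missing idea} is to pass from $I-S(t)$ to the resolvent of $A$ on the lattice $\frac{2\pi i}{t}\Z$, and then apply the Gearhart--Pr\"uss theorem. This is the Pr\"uss-plus-Baire route the paper points to. Suppose $e^{i\mu t}=1$ and set $B_\mu:=\int_0^te^{i\mu(t-s)}S(s)\,ds$. Then $(i\mu I-A)B_\mu x=(I-S(t))x$ for $x\in H$, and $B_\mu(i\mu I-A)x=(I-S(t))x$ for $x\in\dom(A)$. Hence $i\mu\in\rho(A)$, with $(i\mu I-A)^{-1}=(I-S(t))^{-1}B_\mu$ and $\norm{(i\mu I-A)^{-1}}\le\beta MK$ for $t\in J$.

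The lattices $\frac{2\pi}{t}\Z$, $t\in J$, cover $\{\abs{\omega}\ge2\pi/(\beta-\alpha)\}$. The remaining compact segment of $i\R$ lies in $\rho(A)$ by the same identity. Therefore $\sup_{\omega\in\R}\norm{(i\omega I-A)^{-1}}<\infty$, and for a bounded semigroup on a Hilbert space, Gearhart--Pr\"uss gives exponential stability.

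If you prefer to keep your formulation, Pr\"uss's theorem is what repairs it. If $e^{i\theta}\in\sigma(S(T))$, there are unit vectors $y_n$ with $\norm{(A-i\mu_n)y_n}\to0$ for some $\mu_n\in(\theta+2\pi\Z)/T$. Either $\mu_n$ is constant with $i\mu_n\in\sigma(A)$, or $\abs{\mu_n}\to\infty$. Such vectors \emph{are} approximate eigenvectors of every $S(t)$, since $\norm{(S(t)-e^{i\mu_nt})y_n}\le tM\norm{(A-i\mu_n)y_n}$. Now choose $t_n\in J$ with $e^{i\mu_nt_n}=1$, or in the constant case a single $t$ with $e^{i\mu t}=1$. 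This contradicts $\norm{(I-S(t_n))^{-1}}\le K$, respectively the invertibility of $I-S(t)$, and completes your sweeping argument.
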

\noindent The above result follows from the abstract framework in \cite{Pruss1984} using a similar construction to our Baire category argument in Section~\ref{sec:ellipse}. For this reason we omit the proof here. 

In the case described below where finite derivative loss is discussed, we can explicitly construct PWP spaces, and also use what we refer to as {\em a stacking argument} to produce resonant forcing. Moreover, a corollary to our discussion of derivative loss will ensure that, in that case, smooth forcings cannot produce resonance---leaving us with the interesting question of how regular a resonant forcing can be in partially dissipative systems, which we will answer in the sequel.

\subsection{Principal example: Heat-wave interaction}\label{sec:heatwave}

We now introduce the {\em heat-wave system}, the central example to this study. This model (and those similar) has been studied extensively in the literature, see \cite{ZhangZuazua2003,RauchZhangZuazua2005,Duyckaerts2007,AvalosTriggiani2009,AvalosTriggiani2007}, for example. The original motivation for considering this ``simplified" system comes from fluid-structure interactions, which often have more complex vectorial and nonlinear features. Indeed, the heat-wave system is a non-trivial, simplified model that suitably represents {\em hyperbolic-parabolic coupling} across a common interface \cite{ZhangZuazua2007}. Since these first studies, the heat-wave system has received considerable attention, owing to its interesting mathematical features;  it is a canonical partially dissipative system \cite{AvalosLasieckaTriggiani2016,BattyPaunonenSeifert2019}.

Let $\Om\subset\R^d$ be a bounded, connected, Lipschitz domain, decomposed into bounded, connected, Lipschitz subdomains $\Om_W,\Om_H\subset\Om$ with $\Om_W\cap\Om_H=\emptyset$ and $\overline{\Om}=\overline{\Om_W\cup\Om_H}$, sharing a nonempty interface $\Gamma=\overline{\Om_W}\cap\overline{\Om_H}$ of class $\mathcal C^1$. On the remaining (exterior) boundary $\Gamma_0=\Gamma_W\cup\Gamma_H$, where $\Gamma_W:=\partial\Om_W\setminus\Gamma$ and $\Gamma_H:=\partial\Om_H\setminus\Gamma$, a homogeneous Dirichlet condition is prescribed; we assume throughout that $\Gamma_W$ has positive measure to prevent a nontrivial equilibrium state. Fixing a choice of unit normal  $\mathbf n$ to $\Gamma$, the system can be written
\begin{equation}\label{eq:hw}
\left\{
\begin{aligned}
w_{tt}-\Delta w&=f_W &&\text{in }(0,\infty)\times\Om_W,\\
u_t-\Delta u&=f_H &&\text{in }(0,\infty)\times\Om_H,\\
w_t=u,\quad \partial_{\mathbf n}w&=\partial_{\mathbf n}u &&\text{on }(0,\infty)\times\Gamma,\\
w=0\ \text{on }\Gamma_W,\quad u&=0 &&\text{on }\Gamma_H.
\end{aligned}\right.
\end{equation}
This can be written as a first-order system $\dot U = \mathcal A U + \begin{bmatrix} 0, f_W, f_H \end{bmatrix}^T$ for the state $U=[w,w_t,u]^T$. The heat-wave problem then takes the abstract form \eqref{eq:main} on the finite-energy space
\[
\HH:=H^1_{\Gamma_W}(\Om_W)\times L^2(\Om_W)\times L^2(\Om_H),
\]
where $H^1_{\Gamma_W}(\Om_W)$ denotes the space of $H^1(\Om_W)$-functions vanishing on $\Gamma_W$. The generator $\cA$ encodes the spatial operators together with the interface/coupling conditions, and generates a $C_0$-semigroup of contractions $\big(S(t)\big)_{t\ge0}$ on $\HH$ \cite{ZhangZuazua2007,AvalosLasieckaTriggiani2016,AvalosTriggiani2007}. Two foundational facts frame subsequent analysis of this system. First, $S(t)$ is strongly stable, and this property requires no geometric conditions on the domain  decomposition of $\Omega$ or $(\Gamma_W,\Gamma)$ \cite[Theorem 4]{ZhangZuazua2007}. Secondly, $S(t)$ fails to be uniformly stable. Indeed, the Gaussian beam constructions of \cite[Section 6]{ZhangZuazua2007} produce approximate solutions whose energy concentrates in $\Om_W$ and are almost entirely reflected at the interface, excluding exponential decay in essentially any geometric configuration (\cite[Theorems 5 and 6]{ZhangZuazua2007}: polyhedral wave domains, or $\Om$ and $\Gamma$ of class $\mathcal C^4$). The heat-wave system therefore falls squarely within the class of partially dissipative systems of central concern in this paper.

Any analysis  beyond strong stability, however, is contingent on geometry of tuple $(\Omega_W, \Gamma)$---as is familiar from the analysis of damped wave equations. In these dynamics, decay rates are governed by the interaction between the rays of geometric optics and the ``damping region" \cite{BardosLebeauRauch1992}. For the heat-wave system, the role of the damping region is played by the intrinsic heat dissipation in $\Omega_H$, acting through the interface $\Gamma$. Under the the Geometric Control Condition (GCC)---that every ray of geometric optics in $\Om$ enters $\Om_H$ in some uniform time---the system is rationally stable \cite[Theorem 11]{ZhangZuazua2007} (see also \cite{ZhangZuazua2003,RauchZhangZuazua2005,AvalosLasieckaTriggiani2016,Avalos2007}, and \cite{Duyckaerts2007}). One proof method proceeds through a weakened ``observability inequality", with observations taken on the heat domain: for data in $\cD(\cA^3)$ and $T$ sufficiently large,
\begin{equation}\label{obsest}
\norm{U(0)}_{\HH}\le C\,\norm{\nabla u}_{H^3(0,T;L^2(\Om_H))},
\end{equation}
see \cite[Theorem 12]{ZhangZuazua2007}. We note that a classical observability inequality carries no time-derivatives on the right-hand side, in contrast to \eqref{obsest}. In this paper we use the expression \emph{loss of derivatives} in this sense: the right-hand side of a central estimate requires more derivatives (in the Sobolev sense) than the corresponding classical estimate would at the finite energy level.

On the periodic side, decay results connect to the discussion of Section~\ref{sec:notions}. The rational stability of $S(t)$ in this case is equivalent, by Theorem~\ref{polystab}, to polynomial resolvent growth along $i\R$, and the abstract result of \cite{GMW2026} furnishes explicit PWP spaces for the heat-wave system under the GCC. Alternatively, one can proceed directly through PDE estimates, as carried out in \cite{MMSW2024}; there, PWP was established for forcings in explicitly constructed spaces. These results are in contrast with the abstract result of Proposition~\ref{prop:Galdi}, {\em which only guarantees the existence of some dense well-posedness set}. The geometric conditions of \cite{MMSW2024} are broader than the classical GCC, admitting certain degenerate spatial domains. The crucial a priori estimates for the periodic problem in \cite{MMSW2024} are reminiscent of the observability estimates above, likewise exhibiting a clear loss of derivatives.

A subtle point,  which we will return to below, distinguishes between full and sampled resolvent information \cite{Pruss1984}. Polynomial growth of $\norm{R_\lambda}$ along all of $i\R$ implies, in particular, polynomial growth along any lattice $i\frac{2\pi}{T}\Z$; it is the sampled information that governs the $T$-periodic problem. The converse is not immediate: a resolvent bound along a lattice need not carry information between sample points. Thus, there are two scenarios of interest: First, geometric conditions yield controlled resolvent growth, and, with it, a Sobolev description of PWP spaces. In the second case, geometric conditions are violated and  polynomial resolvent bounds are not available, yet sampled resolvent control for particular periods $T$ remain accessible. However, when geometric conditions are violated, the quantitative picture is largely open, both for stability as well as  for the periodic problem.

Motivated by the connection between derivative loss and constructing PWP spaces, we shall introduce  a further notion of resonance, formulated entirely through derivative loss. \emph{Infinite} derivative loss will be the critical regime. The main analytical result of the latter part of the paper is the construction of a geometry---which we refer to as the \emph{resonant ellipse}---that exhibits infinite derivative loss for suitable periods $T$. Our example will demonstrate, in particular, that geometric conditions are necessary for PWP. Complementing it, we give an abstract construction of PWP purely in terms of norms defined on the resolvent $R_{\lambda}$ restricted to the lattice $i\frac{2\pi}{T}\Z$. These spaces (and the resulting correspondence) are reminiscent of general Tauberian theorems, of which the Borichev-Tomilov result \cite{BorichevTomilov2010} is an example. Finally, recall that the failure of uniform stability already produces, through the results of \cite{Galdi} given in Section~\ref{sec:notions}, statically resonant pairs $(T,f)$ in the mild forcing class. For the heat-wave system, the question is therefore not \emph{whether} resonance occurs, but through which mechanisms it does so. Subsequently, we ask {\em can a smooth forcing can produce resonance}? We will show that this is indeed possible by explicitly demonstrating it for the heat-wave system.

\section{New notions of resonance and taxonomy}\label{sec:taxonomy}

We have established that static range failure is the central mechanism for eliciting resonance. On the other hand, by Proposition~\ref{prop:Galdi}, some dense PWP space always exists when $S(t)$ is strongly stable. The latter result  is non-constructive, and identifying the relevant PWP (or resonant) spaces is challenging. When PWP can be connected directly to derivative loss, as in \cite{GMW2026}, identification becomes more direct. To exploit range failure, one seeks to understand the set $\mathcal R(I-S(T))$. We pursue this through the idea of derivative loss, read through the growth (or finer structure) of the resolvent on the lattice $i\frac{2\pi}{T}\Z$. In this sense, our resonance studies are genuinely {\em topological} in nature, as they depend on {\em how}  forcing and response are measured, hence on the choice of norms and the respective connections there between. Point-spectral resonance, being algebraic in nature is agnostic to such choices, whereas {\em each notion in this section is topological}. We recall that static resonance (Definition~\ref{def:galdi}) is equivalent to secular resonance via the Massera-type theorem \cite{MurakamiNaitoMinh2004} stated in Section~\ref{sec:notions} and proved below.

We begin with the spectral notion promised in Section~\ref{sec:terminology}. Strong stability removes the point spectrum from the imaginary axis (Lemma~\ref{prop:no-classical}), but the full spectrum may still reach it, and it can do so only through the approximate point spectrum, since $\sigma(A)\cap i\R=\sap(A)\cap i\R$ under our standing assumptions.
\begin{definition}[Spectral resonance]\label{def:spectrum}
System \eqref{eq:main} is \emph{spectrally resonant} if
\[
\sap(A)\cap i\R\neq\emptyset,
\]
equivalently, in this setting, if $\sigma(A)\cap i\R\neq\emptyset$.
\end{definition}
Point-spectral resonance (Definition~\ref{def:spectral}) is the special case in which the  approximate eigenvectors are genuine eigenvectors, and under strong stability that case cannot occur. 
\begin{definition}[Derivative loss]\label{def:finite-infinite-loss}
Let $T>0$ and let $m\ge1$ be an integer. We say that the operator $A$ \emph{exhibits an $m$-derivative loss} for the period $T$ {\bf if there does not} exist a constant $C_m>0$ such that
\[
\|u\|_{L^2(0,T;H)}\le C_m\|f\|_{H^{m-1}_{\#}(0,T;H)}
\]
holds for all $f\in H^{m-1}_{\#}(0,T;H)$ admitting a $T$-periodic mild solution $u$ of \eqref{eq:main}. 

We say that $A$ \emph{exhibits infinite derivative loss} for the period $T$ if it exhibits $m$-derivative loss for every $m\ge1$.
\end{definition}
In the above, we note that such a $u$ is necessarily unique by our strong stability hypothesis; as such, we refer to such a pair $(f,u)$ consisting of a $T$-periodic forcing and a corresponding $T$-periodic mild solution of \eqref{eq:main}  as an \emph{admissible pair} for the period $T$.

\begin{remark}\label{rem:loss-resolvent}
Definition~\ref{def:finite-infinite-loss} can be equivalently stated through sampled resolvent growth. Assuming $i\omega_n\in\rho(A)$ for all $n\in\Z$, the operator $A$ exhibits $m$-derivative loss for the period $T$ if and only if
\begin{align}\label{eq:sampled-growth}
\limsup_{\abs{n}\to\infty}\,(1+\abs{\omega_n})^{-(m-1)}\,\norm{R_{i\omega_n}}_{\cL(H)}=\infty,
\end{align}
and, consequently, infinite derivative loss holds if and only if the sampled resolvent grows faster than every polynomial along some subsequence of the frequencies $\omega_n$. 
\end{remark}

Infinite derivative loss is therefore a fixed-period notion, and Remark~\ref{rem:loss-resolvent} shows that it demands a certain arithmetic alignment by sampling the resolvent along the lattice $\{\omega_n\}$ of the specified period. Our final notion removes this alignment by allowing the period to drift: near a target period there may exist nearby periods at which the periodic problem is ``catastrophically" ill-conditioned, although no single fixed period need carry infinite loss.

\begin{definition}[High-frequency catastrophe]\label{def:period-accum}
Let $T_*>0$. We say that the operator $A$ \emph{exhibits a high-frequency catastrophe at $T_*$} if for every compact interval $I\subset(0,\infty)$ with nonempty interior containing $T_*$ and every integer $m\ge1$, {\bf there does not} exist a constant $C_{m,I}>0$ such that
\[
\|u\|_{L^2(0,T;H)}\le C_{m,I}\,\|f\|_{H^{m-1}_{\#}(0,T;H)}
\]
holds for all $T\in I$ and all $f\in H^{m-1}_{\#}(0,T;H)$ admitting a $T$-periodic mild solution $u$ of \eqref{eq:main}. 

Equivalently, $A$ exhibits a high-frequency catastrophe at $T_*$ if and only if for every integer $m\ge1$ there exist periods $T_j\to T_*$ and admissible pairs $(f_j,u_j)$ for the periods $T_j$ with
\[
\|u_j\|_{L^2(0,T_j;H)}\ge c_0>0,
\qquad
\|f_j\|_{H^{m-1}_{\#}(0,T_j;H)}\longrightarrow 0 .
\]
\end{definition}
\noindent Such a family of pairs $(f_j,u_j)$ as above will be referred to as {\em witnessing pairs}. 

Definition~\ref{def:period-accum} negates a period-uniform form of the estimate in Definition~\ref{def:finite-infinite-loss}, and is accordingly weaker: infinite derivative loss at $T_*$ immediately implies a high-frequency catastrophe at $T_*$, whereas  catastrophe requires no alignment of  ``bad" frequencies with any  lattice. In our examples, the mechanism producing such catastrophes is quasimodal and will live at high frequencies, whence the name. Two warnings are in order. A high-frequency catastrophe by itself may not yield secular resonance: each periodic response is bounded and periodic, and the period is not held fixed. It does, however, indicate the availability of infinite loss at exceptional periods; for the resonant ellipse we will upgrade this mechanism to infinite derivative loss for special periods.

We now establish that the above notions  are not vacuous. We will now state the main result of this part of the paper, described in Section~\ref{sec:heatwave}. The principal idea is to construct a geometry for $\Omega_W$ that concentrates energy along a particular billiard orbit, with prescribed decay away from said orbit, essentially rendering the wave dynamics ``invisible" to the heat domain. Such a construction yields the high-frequency catastrophe for all periods, and this can be utilized to extract special periods demonstrating infinite loss and, later,  secular resonance.

\begin{center} \includegraphics[scale=1]{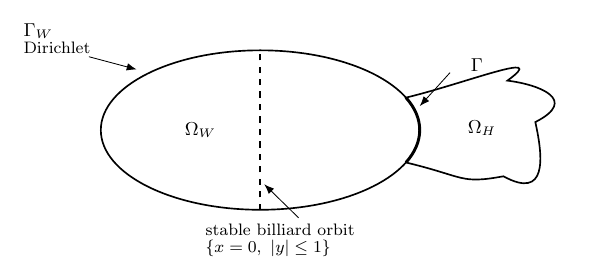} 
\end{center}

Although more general domains will be possible in the sequel, we take a simplified domain for straightforward exposition. Consider
\[
\Omega_W:=\{(x,y)\in\R^2:\ \frac{x^2}{a^2}+\frac{y^2}{b^2}<1\},\quad a>b
\]
coupled across a relatively open interface arc $\Gamma\subset\partial\Om_W$ with $\overline\Gamma\subset\partial\Om_W\cap\{x>0\}$ to a bounded, Lipschitz heat domain $\Om_H$ (so that $\overline\Gamma\subset\{x\ge\delta_\Gamma\}$ for some $\delta_\Gamma>0$, by compactness); the remaining boundary $\Gamma_W=\partial\Om_W\setminus\Gamma$ carries the Dirichlet condition. Upon the minor axis $\{x=0,\ \abs{y}\le1\}$ we consider a stable billiard orbit of $\Om_W$ that never meets $\overline\Gamma$, so the geometric conditions of Section~\ref{sec:heatwave} fail. We call this configuration the \emph{resonant ellipse}. We present an example diagram below:

\begin{theorem}[Heat-wave resonant ellipse]\label{thm:ellipse}
Consider the heat-wave system \eqref{eq:hw} defined for the resonant ellipse configuration above.
\begin{enumerate}
\item[\textup{(i)}] The system exhibits a high-frequency catastrophe at every $T_*>0$, in the sense of Definition~\ref{def:period-accum}. The witnessing pairs may be taken to be smooth in time, with values in $\dom(\cA)$, with vanishing heat component and forcing supported in $\Omega_W$ at positive distance from $\Gamma$.
\item[\textup{(ii)}] There is a set $\mathcal R\subset(0,\infty)$, simultaneously a dense $G_\delta$ and Lebesgue-null, such that for every $T\in\mathcal R$ the system \eqref{eq:hw} exhibits infinite derivative loss in the sense of Definition~\ref{def:finite-infinite-loss}. The sampled resolvent norms grow exponentially along a subsequence of the lattice $\{i\omega_n\}$. Consequently, for each $T\in\mathcal R$ there exists a smooth $T$-periodic forcing $f\in C^\infty_{\#}(0,T;\HH)$ for which no $T$-periodic mild solution exists. Hence, such a pair $(T,f)$ is statically (Definition~\ref{def:galdi}), and therefore secularly (Definition~\ref{def:secular}), resonant.
\end{enumerate}
\end{theorem}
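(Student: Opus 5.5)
The plan is to build quasimodes for the Dirichlet wave operator in the ellipse, concentrated on the minor-axis bouncing-ball orbit $\{x=0\}$, and to feed them into the abstract machinery (Remark~\ref{rem:loss-resolvent}, the Massera-type equivalence, and the stacking construction).

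\textbf{Quasimodes.} The minor-axis orbit is elliptic (stable) because $a>b$. Gaussian beam / Babich–Lazutkin constructions along this orbit give a sequence of frequencies $k_j\to\infty$ and functions $\phi_j\in C^\infty(\overline{\Om_W})$ with $\phi_j=0$ on $\partial\Om_W$ and $\norm{\phi_j}_{L^2}=1$. They satisfy
\[
\norm{(\Delta+k_j^2)\phi_j}_{L^2}\le Ce^{-ck_j}.
\]
To get this I would combine the Gaussian transverse decay $e^{-ck_jx^2}$ of the beam with a smooth cutoff equal to one near $\{|x|\le\delta_\Gamma/2\}$. Then $\phi_j$ and all its derivatives are $O(e^{-ck_j})$ on a neighbourhood of $\overline\Gamma$, and the cutoff error is exponentially small. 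The exponential (rather than $O(k^{-\infty})$) error requires analytic boundary and an analytic-class construction. The ellipse has analytic boundary, so I would use the known exponentially accurate bouncing-ball quasimodes, or explicit Mathieu-function asymptotics in elliptic coordinates. This is the step I expect to be the main obstacle. A fallback is $O(k^{-N})$ for every $N$, which still gives super-polynomial growth.

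Set $U_j=[\phi_j,\,ik_j\phi_j,\,0]^T$. It lies in $\dom(\cA)$ once the interface conditions are corrected by an exponentially small term supported near $\Gamma$, and it gives $\norm{(\cA-ik_jI)U_j}_{\HH}\le Ce^{-ck_j}\norm{U_j}_{\HH}$.

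\textbf{Part (i).} Fix $m$ and $T_j\to T_*$. Choose $n_j$ with $\omega=2\pi n_j/T_j=k_{\ell}$ exactly. Since $k_\ell\to\infty$, one can always pick $T_j=2\pi n_j/k_\ell$ within $O(1/k_\ell)$ of $T_*$. Define
\[
u_j(t)=e^{ik_\ell t}U_\ell,\qquad f_j=\dot u_j-\cA u_j=e^{ik_\ell t}(ik_\ell I-\cA)U_\ell .
\]
This is an admissible pair: both entries are smooth in time and $T_j$-periodic, the heat component vanishes, and after cutting the exponentially small residual off near $\Gamma$ the forcing is supported in $\Om_W$ away from $\Gamma$. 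We have $\norm{u_j}_{L^2(0,T_j;\HH)}\sim 1$, while $\norm{f_j}_{H^{m-1}}\lesssim k_\ell^{m-1}e^{-ck_\ell}\to0$. This is exactly the witnessing-pair form of Definition~\ref{def:period-accum}.

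\textbf{Part (ii).} For $T>0$ and $L\in\N$, let $G_L$ be the set of $T$ such that some $n$ with $|n|>L$ and some quasimode index $\ell$ satisfy $|2\pi n/T-k_\ell|<e^{-ck_\ell/2}$.

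Each $G_L$ is open. It is also dense, because the grid $2\pi\Z/T$ can be tuned to hit any large $k_\ell$ by a tiny change of $T$. Hence $\mathcal R=\bigcap_L G_L$ is a dense $G_\delta$.

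For nullity, the $T$-measure of the bad set near $k_\ell$ inside a compact interval is at most $\sum_n (T^2/2\pi n)e^{-ck_\ell/2}$ over $n\sim k_\ell$, which is $\lesssim e^{-ck_\ell/4}$. Given Weyl-type polynomial counting of the $k_\ell$, this is summable, so Borel–Cantelli gives Lebesgue measure zero.

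For $T\in\mathcal R$, perturbation gives $\norm{(\cA-i\omega_nI)U_\ell}\lesssim e^{-ck_\ell/2}$ along infinitely many $n$. Hence $\norm{R_{i\omega_n}}\gtrsim e^{ck_\ell/2}$, which is exponential growth. Together with $i\R\subset\rho(\cA)$ on the lattice, Remark~\ref{rem:loss-resolvent} yields infinite derivative loss.

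Finally, I would stack $f=\sum_p \alpha_p e^{i\omega_{n_p}t}\,\cA_{\text{res}}U_{\ell_p}$ with coefficients $\alpha_p=e^{-ck_{\ell_p}/4}$ along a sparse subsequence. This series converges in $C^\infty_\#$. The only candidate periodic solution has Fourier coefficients $R_{i\omega_{n_p}}\hat f(n_p)$, whose norms are bounded below and so are not square summable in $L^2(0,T;\HH)$. Hence no $T$-periodic mild solution exists, and the Massera equivalence gives secular resonance.
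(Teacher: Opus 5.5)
Your architecture is the paper's. Modes concentrated on the minor axis are cut off in $x$ away from $\overline\Gamma$. For (i), witnessing pairs come from rounding the period so that one $k_\ell$ is sampled exactly. For (ii), you take a $\limsup$ of exponentially thin bands around the $k_\ell$, dense by commensurability and null by Borel--Cantelli. The spreading bound $\norm{(i\omega I-\cA)^{-1}}\ge(\eps_\ell+\abs{\omega-k_\ell})^{-1}$ then gives growth on the lattice, and stacking finishes.

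The genuine difference is the step you flag as the obstacle. The paper uses no Gaussian beams; it builds \emph{exact} Dirichlet eigenfunctions of $E$ by separating variables in elliptic coordinates. It takes the positive ground state of the angular Mathieu operator $-G''+\kappa^2c^2\cos^2\eta\,G$, with $\Lambda_\kappa\le M\kappa$ from a Gaussian competitor. An Agmon estimate localizes this ground state at $\eta=\pi/2,3\pi/2$, giving mass $O(e^{-c\delta^2\kappa})$ outside $\delta$-neighbourhoods of the wells. The radial equation is quantized by a Pr\"ufer phase argument, giving $\kappa_j=j\pi/b+O(1)$. The foci are removed via the weak formulation and elliptic regularity.

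Since the modes are exact, the only residual is the commutator with the cutoff. It lives where the mode is exponentially small in $H^1$. Exponential accuracy is therefore elementary, and the mass lower bound, $H^m$-localization and counting are all explicit. Your ``Mathieu-function'' option is precisely what the paper carries out.

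Your Gaussian-beam option would in principle extend to any domain with a linearly stable bouncing-ball orbit avoiding $\overline\Gamma$. However, the standard construction only gives $O(k^{-\infty})$ residuals. That still yields infinite loss on a dense, null $G_\delta$ and a smooth resonant forcing. It does not give the asserted \emph{exponential} growth of the sampled resolvent, which would need the much heavier exponentially accurate constructions you mention.

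Two small repairs are needed.

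First, no interface correction is needed. Cutting the residual off near $\Gamma$ afterwards would actually break admissibility, since $f_j$ must equal $\dot u_j-\cA u_j$ exactly. Simply take the cutoff supported in $\{\abs x<\delta_\Gamma\}$. Then $U_\ell$ vanishes identically near $\overline\Gamma$, has zero heat component, and lies in $\dom(\cA)$ trivially.

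Second, in the stacking the residual must be taken at the lattice frequency and normalized. Set $r_p=(i\omega_{n_p}I-\cA)U_{\ell_p}$ with $\norm{U_{\ell_p}}_{\HH}=1$, and $\hat f_{n_p}=\alpha_p r_p/\norm{r_p}$. The response coefficient is then $\alpha_pU_{\ell_p}/\norm{r_p}$, of norm $\gtrsim e^{ck_{\ell_p}/4}$. With the unnormalized residual, the factor $\alpha_p$ makes the response coefficients square-summable and the contradiction disappears. Alternatively, just invoke Theorem~\ref{thm:loss-secular}, as the paper does.
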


Part (i) is the manifestation of a specific, focusing trapped orbit. Part (ii) is the exceptional endpoint, as the arithmetic alignment of  trapped frequencies with a single lattice, is achieved for special frequencies. {\em We stress that this alignment is obtained by a non-constructive (Baire) argument.} For a period given in advance we cannot use this machinery to decide whether the ``trapped" frequencies are sampled to the required ``accuracy". What the Baire Category Theorem provides is that the set of periods at which the alignment occurs is dense. With a resonant period in hand, however, a resonant forcing is constructed by the stacking argument of Theorem~\ref{thm:loss-secular}. There is a  quasimode construction behind both of the aforementioned parts for the resonant ellipse---a branch of Dirichlet modes of the ellipse concentrating on the minor axis, invisible to the interface. These proofs are given in Section~\ref{sec:ellipse}.

Theorem~\ref{thm:ellipse} is the simplest instance of Theorem~\ref{thm:ellipse-general} in Section~\ref{sec:ellipse}, which covers every wave domain that coincides with an ellipse in a slab around its minor axis, the interface staying outside the slab (Definition~\ref{def:ellipse}). \begin{center} \includegraphics[scale=0.25]{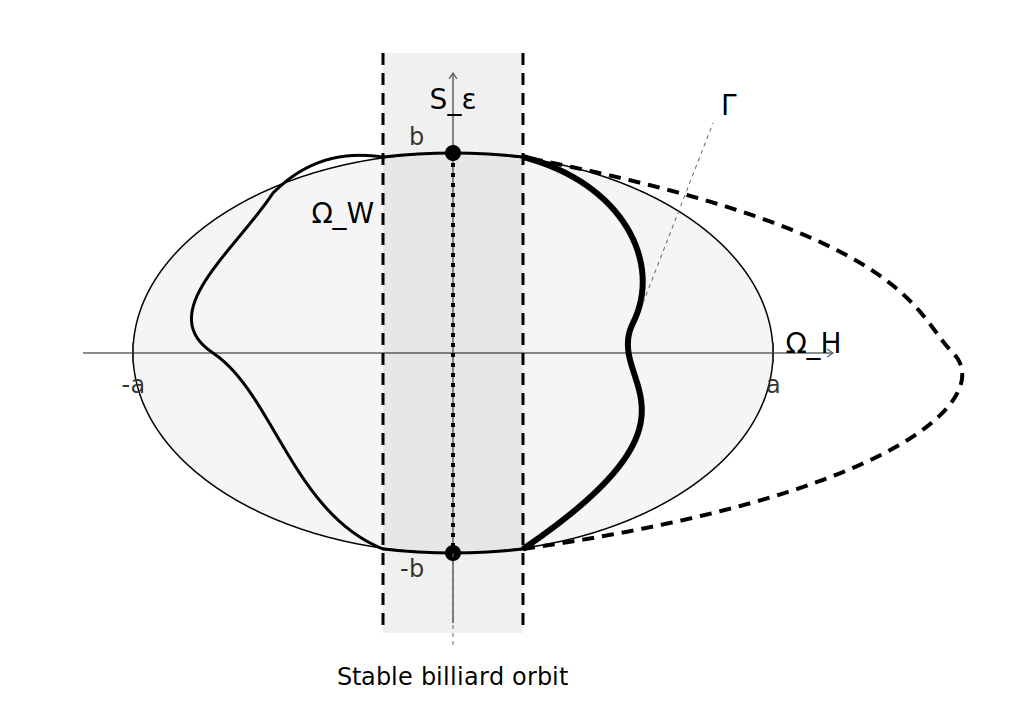} 
\end{center}

One family of such domains measures the sharpness of the geometric hypotheses under which finite-loss periodic well-posedness is known. For $-a<d<a$ let
\[
\Om_W^{d}:=E\cap\{x>-d\},\qquad \Gamma^{d}:=\{x=-d\}\cap E,
\]
with any admissible heat domain attached along the chord $\Gamma^d$: the wave domain is the part of the ellipse to the right of the chord, and it contains the minor axis if and only if $d>0$. For $d\le0$ (at most half of the ellipse) $\Om_W^d$ lies between the graphs $x=|d|$ and $x=a\sqrt{1-y^2/b^2}$ over $\abs y<b\sqrt{1-d^2/a^2}$, and the interface is the entire lower graph. This is the graph condition of \cite{MMSW2024}, which implies the so called graph optics condition, and the corresponding theorem there provides, for every period $T$, a unique finite-energy periodic solution for every wave forcing in $H^8_{\#}(0,T;L^2(\Om_W))$, together with the corresponding estimate. This yields a finite loss, in the sense of Definition~\ref{def:finite-infinite-loss}, for every period. For $d>0$ (anything more than half) the wave domain protrudes beyond both ends of the chord, so the interface is not the lower graph of $\Om_W^d$ in any direction and the graph condition fails. At the same time, Theorem~\ref{thm:ellipse-general} applies, so the system exhibits a high-frequency catastrophe at every period and infinite derivative loss at every period of a dense $G_\delta$: no estimate with any finite loss survives there, and near every period the constant of any such estimate must necessarily blow up. {\em The threshold is exactly the semi-ellipse}. In this sense the graph condition of \cite{MMSW2024} is optimal---the interface cannot be moved past the minor axis by any amount, however small, without losing every Sobolev-type periodic estimate. That mechanism is intrinsically geometric. For $d\le0$ every point of $\Om_W^d$ is joined to the interface by a horizontal segment inside the domain, the ``visibility'' behind the conditions of \cite{MMSW2024}; for $d>0$ the minor axis is a stable bouncing-ball billiard orbit that never meets $\Gamma^d$, and it carries a branch of eigenmodes invisible to the interface. Consistently with the finite loss available at $d=0$, the constants of Theorem~\ref{thm:ellipse-general} degenerate as $d\downarrow0$, the resolvent peaks responsible for the loss appearing only at frequencies of order $d^{-2}$ and beyond. 

Theorem~\ref{thm:ellipse}(ii) invokes, in an essential way, the passage from infinite derivative loss to the non-existence of periodic solutions, and from there to secular resonance. In the section that follows, we establish these implications, together with other remaining relationships among the notions introduced above.

\subsection{Implications and equivalences}\label{sec:implications}

We now establish the mathematical relationships among the resonance notions introduced heretofore. They group naturally into: \emph{static} (the range condition \eqref{kick} and Definition~\ref{def:galdi}), \emph{dynamic} (secular resonance: Definition~\ref{def:secular} and  loss-based Definitions~\ref{def:finite-infinite-loss} and~\ref{def:period-accum}), and \emph{spectral} (Definitions~\ref{def:spectral} and~\ref{def:spectrum}); the implications below cut across the groupings.

The cornerstone is the equivalence between the static and dynamic faces of resonance, announced before in Section~\ref{sec:notions}. For $T$-periodic forcing, the mild solution is determined {\em stroboscopically} by the affine Poincar\'e map determined by the kick $b_f$: $$x\mapsto S(T)x+b_f.$$ A $T$-periodic solution is precisely a fixed point of this mapping, and an unbounded solution is an escaping orbit. Massera's principle \cite{Massera,MurakamiNaitoMinh2004} asserts that any bounded orbit forces a fixed point; in the Hilbert (more generally, reflexive) setting, this rests on a classical ergodic fact, presented now. 

For a bounded operator \(V\in\mathcal L(X)\), set
\[
    S_0^V:=0,
    \qquad
    S_n^V:=\sum_{k=0}^{n-1}V^k,\qquad n\ge1.
\]
We say that \(V\) is power bounded \cite{KatznelsonTzafriri1986} if
\[
    M_V:=\sup_{n\ge0}\|V^n\|_{\mathcal L(X)}<\infty .
\]

\begin{lemma}\label{lem:ergodic}
Let \(V\in\mathcal L(X)\) be power bounded on a reflexive Banach space \(X\),
and let \(b\in X\). Then
\[
    b\in \mathcal R(I-V)
    \qquad\Longleftrightarrow\qquad
    \sup_{n\ge1}\|S_n^V b\|_X<\infty .
\]
\end{lemma}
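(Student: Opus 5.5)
The forward implication is a telescoping identity; the converse is a weak-compactness argument, and I will spend the effort there.

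\emph{Forward direction.} Suppose $b=(I-V)x$ for some $x\in X$. Telescoping gives
\[
S_n^V b=\sum_{k=0}^{n-1}V^k(I-V)x=x-V^nx,
\]
so $\|S_n^V b\|_X\le(1+M_V)\|x\|_X$ for every $n$. This uses only power boundedness; reflexivity is not needed.

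\emph{Converse.} Assume $K:=\sup_{n\ge1}\|S_n^V b\|_X<\infty$. I will work with the Cesàro averages of the partial sums,
\[
y_N:=\frac1N\sum_{n=1}^{N}S_n^V b .
\]
Each $y_N$ is a convex combination of the $S_n^V b$, so $\|y_N\|\le K$. The key algebraic identity is $(I-V)S_n^V b=b-V^nb$. Averaging it gives
\[
(I-V)y_N=b-\frac1N\sum_{n=1}^N V^nb .
\]
Next I use reflexivity. Bounded sets in a reflexive space are weakly sequentially compact (Eberlein--Šmulian), so some subsequence satisfies $y_{N_j}\rightharpoonup y$. Since $I-V$ is bounded, it is weakly continuous, and therefore $(I-V)y_{N_j}\rightharpoonup (I-V)y$.

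To conclude $(I-V)y=b$, it suffices to show that $\frac1N\sum_{n=1}^N V^n b\to0$ at least weakly. Here the boundedness of the partial sums does the work:
\[
\sum_{n=1}^N V^nb=V S_N^V b ,
\]
so this sum has norm at most $M_VK$. Hence $\bigl\|\frac1N\sum_{n=1}^N V^nb\bigr\|\le M_VK/N\to0$, which is in fact norm convergence. Passing to the weak limit in the averaged identity then yields $(I-V)y=b$, so $b\in\mathcal R(I-V)$.

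\emph{Main obstacle.} The only delicate step is extracting a limit point of the bounded sequence. This is exactly where reflexivity enters, through weak compactness of the closed ball. The convergence of the Cesàro remainder is immediate from the uniform bound on the partial sums. An alternative route is to apply the Mean Ergodic Theorem (as in Lemma~\ref{lem:MET}) directly, but the averaging argument above is self-contained.
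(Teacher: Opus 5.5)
Your proof is correct. It follows the same scheme as the paper: form bounded averages of the partial sums $S_n^V b$ on which $I-V$ returns $b$ up to a vanishing error, extract a weak limit using reflexivity, and pass to the limit by weak continuity of $I-V$. The difference is the averaging device. You use Ces\`aro means $y_N=\frac1N\sum_{n=1}^N S_n^V b$, where the paper uses Abel means $x_r=(I-rV)^{-1}b=(1-r)\sum_{j\ge1}r^{j-1}S_j^V b$ with $r\nearrow1$.

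Your route is slightly more elementary. The bound $\|y_N\|\le K$ is immediate by convexity, and the error term $\frac1N VS_N^V b$ is controlled by $\|V\|K/N$ directly, with no Neumann series or summation-by-parts identity. It also makes visible that the converse direction needs only $V\in\mathcal L(X)$ and reflexivity; power boundedness is used only in the forward direction.

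The paper's Abel version works with the resolvent $(I-rV)^{-1}$ of the period map, which suits a paper organized around resolvents. In exchange, as written, it relies on power boundedness for the Neumann series and on an extra summation-by-parts step to obtain the uniform bound on $x_r$. Either way some averaging is essential: taking $S_N^V b$ itself fails, since for $V=-I$ the partial sums are bounded while $V^N b$ does not converge.
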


The forward implication is immediate. The converse is due to
Browder~\cite{Browder1958}; see also Butzer-Westphal~\cite{ButzerWestphal} (and
\cite{LinSine1983} for related extensions and historical discussion).
For completeness, we reproduce a short proof.
\begin{proof}
Suppose first that \(b=(I-V)y\) for some \(y\in X\). Then
\[
    S_n^V b
    =\sum_{k=0}^{n-1}V^k(I-V)y
    =(I-V^n)y .
\]
Therefore
\[
    \|S_n^V b\|_X
    \le (1+M_V)\|y\|_X ,
\]
so the partial sums are uniformly bounded.

Conversely, assume
\[
    C:=\sup_{n\ge1}\|S_n^V b\|_X<\infty .
\]
For \(0<r<1\), define
\[
    x_r:=(I-rV)^{-1}b
        =\sum_{k=0}^{\infty}r^kV^k b .
\]
The Neumann series converges in \(X\), since \(V\) is power bounded. Using
$    V^k b=S_{k+1}^Vb-S_k^Vb,$ and $S_0^Vb=0,$
we obtain, for each \(N\ge0\),
\[
\begin{aligned}
    \sum_{k=0}^{N} r^k V^k b
    &=\sum_{k=0}^{N}r^k\bigl(S_{k+1}^Vb-S_k^Vb\bigr)  
    =(1-r)\sum_{j=1}^{N}r^{j-1}S_j^Vb
      +r^N S_{N+1}^Vb .
\end{aligned}
\]
Since \(\|S_{N+1}^Vb\|_X\le C\), the last term tends to zero in $X$ as
\(N\to\infty\). Hence
$   x_r=(1-r)\sum_{j=1}^{\infty}r^{j-1}S_j^Vb.$
Consequently,
\[
    \|x_r\|_X
    \le (1-r)\sum_{j=1}^{\infty}r^{j-1}\|S_j^Vb\|_X
    \le C .
\]
Thus the family \(\{x_r:0<r<1\}\) is bounded in \(X\). Since \(X\) is reflexive,
there exist a subsequential limit point \(z\in X\) such that for some $r_j\nearrow 1$
\[
    x_{r_j}\rightharpoonup z \in X.
\]
Now
\[
\begin{aligned}
    (I-V)x_r
    &=\bigl[(I-rV)-(1-r)V\bigr]x_r =b-(1-r)Vx_r .
\end{aligned}
\]
Since \(\{x_r\}\) is bounded and \(V\) is bounded, we have
\[
    (1-r)Vx_r\to0 \in X,~
  ~r\nearrow 1.
\]
 Hence
 $   (I-V)x_{r_j}\to b~~
    \text{strongly in }X.$ 
On the other hand, by boundedness of \(I-V\) 
\[
    (I-V)x_{r_j}\rightharpoonup (I-V)z,
\]
and uniqueness of limits gives    $(I-V)z=b$, and  \(b\in\mathcal R(I-V)\), as claimed.
\end{proof}

We now proceed with our main resonance equivalence, Theorem~\ref{thm:equiv}, which justifies taking secular resonance as \emph{the} principal resonance definition. 
\begin{theorem}[Static $\Leftrightarrow$ secular resonance]\label{thm:equiv}
Let $A$ generate a uniformly bounded $C_0$-semigroup; fix $T>0$ and let $f\in L^1_{\#}(0,T;H)$, and set $V:=S(T)$, $b:=b_f=\int_0^TS(T-s)f(s)ds$ as in \eqref{kick}. 

Then TFAE:
\begin{enumerate}[label=\textup{(\roman*)}]
\item $\dot u = Au+f$~ has no $T$-periodic mild solution;
\item every mild solution of \eqref{eq:main} is unbounded on $[0,\infty)$;
\item some mild solution of \eqref{eq:main} is unbounded on $[0,\infty)$;
\item $b\notin\mathcal R(I-V)$;
\item $\sup_{n\ge1}\norm{S_n^Vb}=\infty$.
\end{enumerate}
In particular, $(T,f)$ is a statically resonant pair if and only if it is a secularly resonant pair.
\end{theorem}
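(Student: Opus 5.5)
The plan is to reduce everything to the discrete affine Poincar\'e map $x\mapsto Vx+b$ and then invoke Lemma~\ref{lem:ergodic} with $X=H$, which is reflexive, and $V=S(T)$, which is power bounded with $M_V\le M$ by \eqref{eq:bdd}. The key identity comes from the mild formula \eqref{eq:mild} together with $T$-periodicity of $f$. Write $u(t;u_0)$ for the mild solution started at $u_0$. The semigroup property and the change of variables $s\mapsto s+kT$ give $u((k+1)T)=Vu(kT)+b$, so by induction
\[
u(nT;u_0)=V^nu_0+S_n^Vb,\qquad n\ge 0 .
\]
On each period we also have $u(nT+\tau)=S(\tau)u(nT)+\int_0^\tau S(\tau-s)f(s)\,ds$. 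The integral term is bounded uniformly in $\tau\in[0,T]$ by $M\|f\|_{L^1(0,T;H)}$. Consequently $\sup_{t\ge0}\|u(t)\|<\infty$ if and only if $\sup_n\|u(nT)\|<\infty$. Finally, $\|V^nu_0\|\le M\|u_0\|$, so for every $u_0$ the sequence $u(nT;u_0)$ is bounded if and only if $S_n^Vb$ is bounded.

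With this reduction, the equivalences are linked as follows.

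\textbf{(iv)$\Leftrightarrow$(v).} This is exactly Lemma~\ref{lem:ergodic}.

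\textbf{(v)$\Rightarrow$(ii).} Suppose $\sup_n\|S_n^Vb\|=\infty$. By the reduction, every $u(\cdot;u_0)$ is unbounded, whatever $u_0$ is.

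\textbf{(ii)$\Rightarrow$(iii).} This is trivial.

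\textbf{(iii)$\Rightarrow$(v).} If some $u(\cdot;u_0)$ is unbounded, then $u(nT;u_0)$ is unbounded. Since $V^nu_0$ stays bounded, $S_n^Vb$ must be unbounded.

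\textbf{(i)$\Leftrightarrow$(iv).} A mild solution is $T$-periodic exactly when $u(T)=u(0)$. Indeed, if $u(T)=u_0$, then $u(t+T)$ is the mild solution with initial datum $u(T)=u_0$ and forcing $f(\cdot+T)=f$, so by uniqueness $u(t+T)=u(t)$. The condition $u(T)=u_0$ is precisely \eqref{kick}, namely $(I-V)u_0=b$. Hence a $T$-periodic mild solution exists if and only if $b\in\mathcal R(I-V)$.

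Putting the links together closes the cycle (i)$\Leftrightarrow$(iv)$\Leftrightarrow$(v)$\Rightarrow$(ii)$\Rightarrow$(iii)$\Rightarrow$(v). The final sentence of the theorem then follows from Definitions~\ref{def:galdi} and~\ref{def:secular}: $(T,f)$ is statically resonant iff (i) holds, and secularly resonant iff (iii) holds.

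The only nonroutine ingredient is the converse direction of Lemma~\ref{lem:ergodic}, which is already proved. The point needing care in this proof is the stroboscopic identity $u((k+1)T)=Vu(kT)+b$. It requires shifting the integral over $[kT,(k+1)T]$ and using that $f$ is $T$-periodic a.e. as an $L^1_\#$ function. It also requires controlling the intra-period integral term uniformly, so that boundedness in continuous time and at the sample times $nT$ are equivalent. I would write this step out explicitly and treat the rest as bookkeeping.
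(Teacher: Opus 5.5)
Your proof is correct and follows the paper's argument essentially step for step: the monodromy identity $u(nT)=V^nu_0+S_n^Vb$, the criterion that a mild solution is $T$-periodic iff $u(T)=u(0)$, Lemma~\ref{lem:ergodic}, and the bound $\norm{V^nu_0}\le M\norm{u_0}$. The one difference is how you close the cycle. You prove (iii)$\Rightarrow$(v) using the intra-period estimate, which reduces continuous-time boundedness to boundedness at the times $nT$. The paper instead proves (iii)$\Rightarrow$(i): if a periodic mild solution $v$ existed, it would be bounded as a continuous periodic function, and every mild solution would satisfy $u(t)-v(t)=S(t)\bigl(u(0)-v(0)\bigr)$, so $u$ would be bounded; this route needs no intra-period bound.
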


\begin{proof}
$V=S(T)$ is power-bounded by the semigroup property and assumption \eqref{eq:bdd}. For a mild solution $u$ with $u(0)=u_0$, $T$-periodicity of $f$ gives
\begin{equation}\label{eq:monodromy}
u\bigl((n{+}1)T\bigr)=Vu(nT)+b,
\qquad
u(nT)=V^nu_0+S_n^Vb .
\end{equation}
Moreover $u$ is $T$-periodic if and only if $u(T)=u(0)$: in that case $u(\cdot+T)$ and $u$ are mild solutions with the same initial value and the same forcing, hence coincide. Therefore a $T$-periodic mild solution exists if and only if $(I-V)u_0=b$ is solvable, which is (i)$\Leftrightarrow$(iv). Lemma~\ref{lem:ergodic} gives (iv)$\Leftrightarrow$(v). If (v) holds, then \eqref{eq:monodromy} gives $\norm{u(nT)}\ge\norm{S_n^Vb}-M\norm{u_0}$, so every mild solution is unbounded, so (v)$ \Rightarrow$ (ii). That (ii)$\Rightarrow$(iii) is immediate. Finally, if a $T$-periodic mild solution $v$ existed, any mild solution $u$ would satisfy $u(t)-v(t)=S(t)\bigl(u(0)-v(0)\bigr)$, hence---taking norms---would immediately be bounded; this is (iii)$\Rightarrow$(i) by contraposition.
\end{proof}

Next, we demonstrate that point-spectral resonance produces secular growth by a textbook computation. The proof is obtained by immediate verification. 
\begin{proposition}[Point-spectral $\Rightarrow$ secular]\label{prop:spec-sec}
Let $A\phi=i\omega\phi$, $\omega\in\R\setminus\{0\}$ and $\phi\in\dom(A)\setminus\{0\}$. Put $T=2\pi/\abs{\omega}$ and $f(t)=e^{i\omega t}\phi$. Then $u(t)=t\,e^{i\omega t}\phi$ is a classical, hence mild, solution of \eqref{eq:main} and $\norm{u(t)}=t\norm{\phi}\to\infty$. Thus point-spectral resonance implies secular resonance.
\end{proposition}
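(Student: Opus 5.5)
The proof is a direct verification: I would check that the proposed $u$ is a classical solution of \eqref{eq:main}, appeal to uniqueness of mild solutions, and then read off unboundedness.

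\textbf{Step 1 (regularity and the equation).} Since $\phi\in\dom(A)$, the function $u(t)=t\,e^{i\omega t}\phi$ lies in $C^1([0,\infty);H)$ and takes values in $\dom(A)$. Differentiating gives
\[
\dot u(t)=e^{i\omega t}\phi+i\omega t\,e^{i\omega t}\phi .
\]
On the other hand,
\[
Au(t)+f(t)=t\,e^{i\omega t}A\phi+e^{i\omega t}\phi=i\omega t\,e^{i\omega t}\phi+e^{i\omega t}\phi .
\]
These agree, so $u$ is a classical solution with $u(0)=0$.

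\textbf{Step 2 (classical implies mild).} I would invoke the standard fact that a classical solution coincides with the mild solution \eqref{eq:mild} having the same initial datum, here $u_0=0$. To make this self-contained, one can compute the mild solution directly. The identity $S(t)\phi=e^{i\omega t}\phi$ follows because $v(t)=e^{i\omega t}\phi$ solves $\dot v=Av$ with $v(0)=\phi$, and the homogeneous Cauchy problem has unique solutions. Then
\[
\int_0^tS(t-s)e^{i\omega s}\phi\,ds=\int_0^te^{i\omega(t-s)}e^{i\omega s}\phi\,ds=t\,e^{i\omega t}\phi,
\]
which is exactly $u(t)$. This computation is the only point needing any care, and it also confirms that $u$ satisfies \eqref{eq:mild}.

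\textbf{Step 3 (periodicity and growth).} The forcing $f$ is $T$-periodic with minimal period $T=2\pi/\abs{\omega}$ because $\omega\neq0$, and it is continuous, so $f\in L^1_\#(0,T;H)$. Moreover $\norm{u(t)}=t\norm{\phi}\to\infty$ as $t\to\infty$. Hence $(T,f)$ is a secularly resonant pair in the sense of Definition~\ref{def:secular}.

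Finally, I would note that by Theorem~\ref{thm:equiv} the same pair $(T,f)$ is also statically resonant.
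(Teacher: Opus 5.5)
Your proposal is correct and takes the same approach as the paper, which dismisses the statement as following by immediate verification. Your explicit computation of the mild solution via $S(t)\phi=e^{i\omega t}\phi$, together with the appeal to Theorem~\ref{thm:equiv} for static resonance, is exactly the check that phrase leaves to the reader.
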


The converse fails. For strongly stable semigroups, Lemma~\ref{prop:no-classical} rules out point-spectral resonance, while secular resonance can still occur (as we have seen for the heat-wave system---Theorem~\ref{thm:ellipse}). The following explicit example is also instructive, albeit in the non-strongly stable setting; we  produce secular growth from purely continuous spectrum, at a sublinear rate $\sqrt t$---growth rates are revisited in Section~\ref{sec:growth}.
\begin{proposition}[Secular $\not\Rightarrow$ point-spectral]\label{prop:notconverse}
There exist a Hilbert space $H$ and a  generator $A$ of a uniformly bounded $C_0$-semigroup on $H$ with $\spp(A)=\emptyset$ such that \eqref{eq:main} is secularly resonant.
\end{proposition}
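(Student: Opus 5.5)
We will build the example as a multiplication operator on $H=L^2(\R)$ (or $L^2(-1,1)$), with $(A g)(\xi)=i\xi\, g(\xi)$ on the natural maximal domain. Then $S(t)$ is multiplication by $e^{i\xi t}$. This is a unitary $C_0$-group, so it is uniformly bounded with $M=1$.

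The point spectrum is empty: if $i\xi g=i\omega g$ a.e., then $g=0$ a.e., since $\{\xi=\omega\}$ is a null set. The spectrum equals $i\R$ and is purely continuous. Note that $S(t)$ is not strongly stable, since $\|S(t)g\|=\|g\|$; this matches the remark preceding the statement.

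To exhibit secular resonance we choose the period $T=2\pi$ and a constant-in-time spatial profile. Minimality of the period matters here, so we will instead take $f(t)=e^{it}\phi$ with $\phi\in L^2$, which has minimal period $2\pi$. The mild solution with $u_0=0$ is computed pointwise in $\xi$:
\[
u(t)(\xi)=\int_0^t e^{i\xi(t-s)}e^{is}\phi(\xi)\,ds=e^{i\xi t}\,\frac{e^{i(1-\xi)t}-1}{i(1-\xi)}\,\phi(\xi),
\]
and hence
\[
\|u(t)\|^2=\int \frac{4\sin^2\!\big((1-\xi)t/2\big)}{(1-\xi)^2}\,|\phi(\xi)|^2\,d\xi .
\]
We take $\phi$ to equal $1$ near $\xi=1$ (say on $[0,2]$) and $0$ elsewhere. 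Substituting $\eta=(1-\xi)t/2$, the integral becomes
\[
2t\int_{-t/2}^{t/2}\frac{\sin^2\eta}{\eta^2}\,d\eta,
\]
which tends to $2\pi t$. So $\|u(t)\|\sim\sqrt{2\pi t}\to\infty$, giving the claimed $\sqrt t$ growth. Thus $(2\pi,f)$ is a secularly resonant pair.

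As a cross-check, one could instead use Theorem~\ref{thm:equiv}. It suffices that $b_f\notin\mathcal R(I-S(2\pi))$, where $(I-S(2\pi))$ is multiplication by $1-e^{2\pi i\xi}$. Near $\xi=1$ this multiplier vanishes linearly, while $b_f(\xi)=\int_0^{2\pi}e^{i\xi(2\pi-s)}e^{is}\,ds\,\phi(\xi)$ tends to $2\pi e^{2\pi i}\phi(1)\neq0$. The would-be preimage therefore behaves like $1/(\xi-1)$, which is not in $L^2$.

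The only step needing care is this asymptotic evaluation of the Fejér-type integral, or equivalently the non-$L^2$ singularity of the preimage. Both are elementary. Measurability and domain issues for multiplication operators are standard.
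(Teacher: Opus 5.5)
Your proposal is correct and is essentially the paper's proof: the same skew-adjoint multiplication generator on $L^2(\R)$, a harmonic forcing $e^{i\omega_0 t}\phi$ at a continuous-spectrum frequency, and the same Fej\'er-type computation giving $\norm{u(t)}\sim\sqrt{2\pi t}$. The differences are cosmetic: your indicator profile $\phi=\mathbf{1}_{[0,2]}$ lets you evaluate the integral by direct substitution, where the paper's Gaussian $g$ requires the weak convergence $K_t\,dy\to2\pi\delta_0$; your range cross-check via Theorem~\ref{thm:equiv} is also valid, since the would-be preimage is exactly $\phi(\xi)/(i(1-\xi))\notin L^2$.
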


\begin{proof}
Let \(H=L^2(\mathbb R)\), and let \(M\) be the multiplication operator
\[
    (M\psi)(x)=x\psi(x),
    \qquad
    \mathcal D(M)=\{\psi\in L^2(\mathbb R):x\psi(x)\in L^2(\mathbb R)\}.
\]
Then it is a standard semigroup exercise to observe that \(M\) is self-adjoint,  hence \(A:=iM\) is skew-adjoint, and by Stone's Theorem \cite{Pazy1983} \(A\)
generates the unitary group
\[
    (S(t)\psi)(x)=e^{itx}\psi(x),
    \qquad t\in\mathbb R .
\]
Moreover \(A\) has no eigenvalues, since if \(A\psi=\lambda\psi\), then $
    (ix-\lambda)\psi(x)=0
    ~~\text{for a.e. }x\in\mathbb R,$
 so \(\psi=0\) in
\(L^2(\mathbb R)\). Hence
  $  \sigma_p(A)=\emptyset.$

Fix \(\omega_0\neq0\), set \(T=2\pi/|\omega_0|\), and define
  $  f(t)=e^{i\omega_0 t}g,
    \qquad
    g(x)=e^{-x^2}.$
Then \(f\) is \(T\)-periodic with values in \(L^2(\mathbb R)\) and the zero-initial
mild solution of~
$    \dot u=Au+f$ is
\[
    u(t)=\int_0^t S(t-s)e^{i\omega_0 s}g\,ds .
\]
Pointwise in \(x\), this gives
\[
\begin{aligned}
    u(t,x)
    &=g(x)\int_0^t e^{i(t-s)x}e^{i\omega_0 s}\,ds  \\
    &=g(x)\frac{e^{i\omega_0 t}-e^{itx}}{i(\omega_0-x)} .
\end{aligned}
\]
Consequently,
\[
    |u(t,x)|^2
    =
    |g(x)|^2
    \frac{4\sin^2\bigl(t(\omega_0-x)/2\bigr)}
         {(\omega_0-x)^2}.
\]
Define
\[
    K_t(y):=\frac{4\sin^2(ty/2)}{t y^2},
    \qquad t>0 .
\]
Then
\[
    \|u(t)\|_{L^2(\mathbb R)}^2
    =
    t\int_{\mathbb R}|g(x)|^2K_t(\omega_0-x)\,dx .
\]
It is an exercise to check that \(K_t(y)\,dy\) converges weakly as a family of finite measures to
\(2\pi\delta_0\) as $t\to \infty$. 
Since \(|g|^2\) is bounded and continuous, it follows that
\[
    \int_{\mathbb R}|g(x)|^2K_t(\omega_0-x)\,dx
    \longrightarrow
    2\pi |g(\omega_0)|^2, ~~t\to \infty.
\]
Therefore
\[
    \frac{1}{t}\|u(t)\|_{L^2(\mathbb R)}^2
    \longrightarrow
    2\pi |g(\omega_0)|^2
    =
    2\pi e^{-2\omega_0^2}>0 .
\]
Equivalently,
\[
    \|u(t)\|_{L^2(\mathbb R)}
    \sim
    \sqrt{2\pi}\,e^{-\omega_0^2}\sqrt{t}
    \qquad\text{as }t\to\infty .
\]
Thus the \(T\)-periodic forcing \(f(t)=e^{i\omega_0t}g\) produces an unbounded
mild solution even though \(A\) has no point spectrum. Hence \((T,f)\) is a
secularly resonant pair.
\end{proof}

\begin{remark}\label{rem:mechanism}
Here $\sigma(A)=i\R$ is purely continuous. This example is still spectrally resonant (Definition~\ref{def:spectrum}) but with no eigenvalues. The forcing resonates with the continuous-spectral point $i\omega_0$ at a sublinear rate $\sqrt t$, in contrast with the linear rate of Proposition~\ref{prop:spec-sec}. 
\end{remark}

The remaining implications concern temporal Fourier coefficients. As such we assert without proof the straightforward regularity of Fourier coefficients in the mild setting. See the related discussion in the classical reference \cite{Pruss1984}.
\begin{lemma}\label{lem:fourier}
Let $f\in L^1_{\#}(0,T;H)$ and let $u$ be a $T$-periodic mild solution of \eqref{eq:main}. Then for every $n\in\Z$ the coefficient $\hat u_n:=\frac1T\int_0^Te^{-i\omega_nt}u(t)\,dt$ belongs to $\dom(A)$ and
\begin{equation}\label{eq:modes}
(i\omega_nI-A)\hat u_n=\hat f_n .
\end{equation}
\end{lemma}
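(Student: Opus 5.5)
We prove Lemma~\ref{lem:fourier} by integrating the mild solution against the character $e^{-i\omega_n t}$ and using the standard fact that $\int_0^t S(r)x\,dr\in\dom(A)$ with $A\int_0^t S(r)x\,dr=S(t)x-x$. This identity extends to Bochner integrals against $L^1$ weights because $A$ is closed.

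The first step is to record the integrated form of the mild equation. For any $x\in H$, $v(t):=\int_0^t S(t-s)g(s)\,ds$ with $g\in L^1(0,T;H)$ satisfies, for $0\le t_1\le t_2$,
\[
\int_{t_1}^{t_2}u(\tau)\,d\tau\in\dom(A),\qquad u(t_2)-u(t_1)=A\int_{t_1}^{t_2}u(\tau)\,d\tau+\int_{t_1}^{t_2}f(\tau)\,d\tau .
\]
This holds for the mild solution $u(t)=S(t)u_0+\int_0^tS(t-s)f(s)\,ds$. The homogeneous part follows from the identity above. For the convolution part, we apply Fubini to obtain
\[
\int_0^t\!\int_0^\tau S(\tau-s)f(s)\,ds\,d\tau=\int_0^t\Bigl(\int_0^{t-s}S(r)f(s)\,dr\Bigr)ds,
\]
and then use closedness of $A$ to pass $A$ under the outer integral. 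This is the standard characterization of mild solutions, as in \cite{Pazy1983,EngelNagel2000}.

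The second step is to multiply by the character. Set $w(t):=e^{-i\omega_n t}u(t)$. Then $w$ is again a mild solution, of the equation $\dot w=(A-i\omega_n I)w+e^{-i\omega_n t}f(t)$, because $e^{-i\omega_n t}S(t)$ is the semigroup generated by $A-i\omega_n I$ and the variation of parameters formula transforms accordingly. Applying the integrated identity of the first step to $w$ on $[0,T]$ gives
\[
w(T)-w(0)=(A-i\omega_n I)\int_0^T w(\tau)\,d\tau+\int_0^T e^{-i\omega_n\tau}f(\tau)\,d\tau .
\]
By $T$-periodicity of $u$ and $e^{-i\omega_n T}=1$, the left side vanishes. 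Dividing by $T$, we conclude that $\hat u_n\in\dom(A)$ and $(A-i\omega_n I)\hat u_n+\hat f_n=0$, which is exactly \eqref{eq:modes}.

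The only genuine point of care is the first step, namely justifying that $A$ commutes with the Bochner integrals when $f$ is merely $L^1$. This is handled by approximating $f$ in $L^1$ by continuous functions and invoking closedness of $A$. Equivalently, one can apply Hille's theorem directly, since $s\mapsto \int_0^{t-s}S(r)f(s)\,dr$ is Bochner integrable in the graph norm: its $A$-image $S(t-s)f(s)-f(s)$ is integrable. Everything else is algebra.
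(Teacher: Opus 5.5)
Your proof is correct and complete. The paper does not prove this lemma; it asserts it as standard and points to \cite{Pruss1984}. So there is no argument of the authors to compare with, and yours supplies the omitted step in the standard way.

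Your argument has two parts:
\begin{enumerate}
\item The integrated form $u(t)-u(0)=A\int_0^t u+\int_0^t f$ of the mild equation, obtained from Fubini and Hille's theorem. Your remark that $s\mapsto\int_0^{t-s}S(r)f(s)\,dr$ is Bochner integrable in the graph norm already handles $f\in L^1$ directly, so the extra approximation by continuous forcings is unnecessary.
\item This identity applied to the modulated semigroup $e^{-i\omega_n t}S(t)$, generated by $A-i\omega_nI$, over one period. There $e^{-i\omega_nT}=1$ and $u(T)=u(0)$ cancel the boundary term.
\end{enumerate}
Your argument uses neither the uniform bound \eqref{eq:bdd} nor strong stability nor the Hilbert structure. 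It needs only the single condition $u(T)=u(0)$.

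An equally short alternative is the weak (duality) formulation. For $y\in\dom(A^*)$, the function $t\mapsto(u(t),y)_H$ is absolutely continuous with derivative $(u(t),A^*y)_H+(f(t),y)_H$. Integrating against $e^{-i\omega_nt}$ over a period gives $(\hat u_n,A^*y)_H=(i\omega_n\hat u_n-\hat f_n,y)_H$ for all such $y$. Since $A$ is closed and densely defined, this yields $\hat u_n\in\dom(A)$ together with \eqref{eq:modes}. Your route avoids adjoints and the weak-solution characterization altogether.

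One cosmetic point: the opening sentence of your first step introduces $x$, $v$ and $g$ but then displays an identity for $u$ and $f$. State the integrated identity directly for the mild solution $u$ with $u(0)=u_0$ and forcing $f$.
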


With Lemma~\ref{lem:fourier} in hand, we note that spectrum on the sampling lattice obstructs the periodic problem.
\begin{theorem}[Spectral $\Rightarrow$ secular]\label{thm:sampled-spectrum}
Suppose \eqref{eq:main} is spectrally resonant, and say $i\omega\in\sigma(A)$ with $\omega\in\R\setminus\{0\}$. Let $T>0$ sample the frequency, so $\omega\in\frac{2\pi}{T}\Z$. Then there is a harmonic forcing $f(t)=e^{i\omega t}g$ for which \eqref{eq:main} admits no $T$-periodic mild solution, and, by Theorem \ref{thm:equiv}, secular resonance is observed for the forcing $f(t)$. 
\end{theorem}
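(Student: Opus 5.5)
The plan is to reduce to the single Fourier mode $n$ with $\omega=\omega_n$, using Lemma~\ref{lem:fourier}, and to split according to whether the operator $i\omega I-A$ fails to be surjective or fails to be injective. Under our standing assumptions $\sigma(A)\cap i\R=\sap(A)\cap i\R$, and strong stability is not assumed here. So we cannot simply say $i\omega I-A$ is injective with non-closed range. In every case, though, $i\omega\in\sigma(A)$ means that $i\omega I-A$ is not a bijection from $\dom(A)$ onto $H$. We consider two cases.

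\emph{Case 1: $\mathcal R(i\omega I-A)\neq H$.} Choose $g\in H\setminus\mathcal R(i\omega I-A)$ and set $f(t)=e^{i\omega t}g$. Since $\omega\in\frac{2\pi}{T}\Z$, this $f$ is $T$-periodic and smooth. Suppose a $T$-periodic mild solution $u$ existed. Then Lemma~\ref{lem:fourier} gives $(i\omega I-A)\hat u_n=\hat f_n=g$ with $\hat u_n\in\dom(A)$, contradicting the choice of $g$.

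\emph{Case 2: $i\omega I-A$ is surjective.} Then it cannot be injective, because a closed operator that is bijective onto $H$ has a bounded inverse. So there is $\phi\in\dom(A)\setminus\{0\}$ with $A\phi=i\omega\phi$. Take $g=\phi$. Proposition~\ref{prop:spec-sec}, applied with the $T$-periodic forcing $e^{i\omega t}\phi$, gives the unbounded solution $u(t)=te^{i\omega t}\phi$. Here the period is $T$ rather than the minimal period $2\pi/|\omega|$, but that computation does not depend on which period is used. Theorem~\ref{thm:equiv}, direction (iii)$\Rightarrow$(i), then shows that no $T$-periodic mild solution exists.

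Alternatively, Case 2 can be handled directly at the Fourier level. Write $\mathcal R(i\omega I-A)^\perp=\mathcal N(-i\omega I-A^*)$. Since $\phi$ is an eigenvector, we have $i\omega\in\sigma_p$. By the mean ergodic decomposition, $\phi$ is orthogonal to the range. Taking $g=\phi$, the relation $\langle (i\omega-A)\hat u_n,\phi\rangle=0\neq\|\phi\|^2$ forces a contradiction. If strong stability is available, Case 2 is simply vacuous by Lemma~\ref{prop:no-classical}. I would present that as the main line and keep the eigenvector case only as a remark.

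In both cases, Theorem~\ref{thm:equiv} converts the absence of a periodic solution into secular resonance for $(T,f)$.

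The main obstacle is justifying Lemma~\ref{lem:fourier} in the mild setting, which the paper asserts without proof. If it is needed, I would sketch the argument as follows. Integrate the mild formula against $e^{-i\omega_n t}$ over one period. Use periodicity and the identity $A\int_0^t S(t-s)x\,ds=S(t)x-x$. This shows $\hat u_n\in\dom(A)$ and yields \eqref{eq:modes}.
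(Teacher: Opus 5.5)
Your proof is correct and is essentially the paper's. The paper splits into $i\omega\in\spp(A)$, handled by Proposition~\ref{prop:spec-sec} together with Theorem~\ref{thm:equiv}, (iii)$\Rightarrow$(i), versus $i\omega I-A$ injective and hence, by the bounded inverse theorem, not surjective, handled by picking $g\notin\mathcal R(i\omega I-A)$ and applying Lemma~\ref{lem:fourier}. That is your surjective/non-surjective dichotomy read the other way round.

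You should drop the ``alternative'' aside, however. For a merely uniformly bounded semigroup the ergodic splitting $H=\mathcal N(i\omega I-A)\oplus\overline{\mathcal R(i\omega I-A)}$ is direct but in general not orthogonal. So you only get $\phi\notin\overline{\mathcal R(i\omega I-A)}$, not $\phi\perp\mathcal R(i\omega I-A)$. That corrected statement contradicts the Case~2 hypothesis $\mathcal R(i\omega I-A)=H$ outright, which shows Case~2 is empty and Case~1 alone suffices.
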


\begin{proof}
If $i\omega\in\spp(A)$, Proposition~\ref{prop:spec-sec} provides an unbounded mild solution and Theorem~\ref{thm:equiv}\,(iii)$\Rightarrow$(i) then excludes a periodic one. 

If $i\omega\in\sigma(A)\setminus\spp(A)$, then $i\omega I-A$ is injective, so $\mathcal R(i\omega I-A)\neq H$ (by the bounded inverse theorem); pick $g\notin\mathcal R(i\omega I-A)$. A $T$-periodic mild solution for $f(t) = e^{i\omega t}g$ would give, by Lemma~\ref{lem:fourier} at $\omega_n=\omega$, an element $\hat u_n\in\dom(A)$ with $(i\omega I-A)\hat u_n=g$, which yields an immediate contradiction.
\end{proof}

 Finally, we show that infinite derivative loss forces secular resonance, in fact with \emph{smooth} forcing. We refer to this as a {\em stacking argument}. 
 \begin{theorem}[Infinite loss $\Rightarrow$ secular]\label{thm:loss-secular}
Fix $T>0$, and assume $i\omega_n\in\rho(A)$ for all $n\in\Z$. Assume that $A$ exhibits infinite derivative loss for the period $T$. Then there exists $f\in C^\infty_{\#}(0,T;H)$ such that \eqref{eq:main} admits no $T$-periodic mild solution; consequently, $(T,f)$ is a statically, and therefore secularly, resonant pair.
\end{theorem}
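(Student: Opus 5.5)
Using Remark~\ref{rem:loss-resolvent}, infinite derivative loss for the period $T$ means that $\norm{R_{i\omega_n}}$ grows faster than every polynomial along some subsequence of the lattice. The plan is to \emph{stack} single-mode forcings with rapidly decaying amplitudes. This produces a smooth forcing whose only candidate periodic solution has Fourier coefficients that fail to be square-summable. Lemma~\ref{lem:fourier} then rules out a periodic solution, and Theorem~\ref{thm:equiv} gives the resonance conclusions.

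\emph{Step 1: choosing modes.} By the growth condition, choose inductively distinct indices $n_k$ with $\abs{n_k}$ strictly increasing such that
\[
\norm{R_{i\omega_{n_k}}}\ge k\,(1+\abs{\omega_{n_k}})^{k}.
\]
By the definition of the operator norm, pick unit vectors $g_k\in H$ with
\[
\norm{R_{i\omega_{n_k}}g_k}\ge \tfrac12 k(1+\abs{\omega_{n_k}})^k.
\]
Set
\[
f(t):=\sum_{k\ge1}(1+\abs{\omega_{n_k}})^{-k}e^{i\omega_{n_k}t}g_k .
\]
For each fixed $j$, the $j$-th time derivative has terms of size at most $(1+\abs{\omega_{n_k}})^{j-k}$. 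Since $\abs{\omega_{n_k}}\to\infty$, these terms are summable, so the series converges in every $C^j$ norm and $f\in C^\infty_{\#}(0,T;H)$. The frequencies $\omega_{n_k}$ lie on the lattice, so $f$ is $T$-periodic. If minimality of the period is a concern, the indices can be chosen to have coprime gcd, or one can add a harmless fundamental mode.

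\emph{Step 2: contradiction.} Suppose a $T$-periodic mild solution $u$ exists. Since $u\in C([0,T];H)\subset L^2(0,T;H)$, Parseval gives
\[
\sum_n\norm{\hat u_n}^2<\infty .
\]
By Lemma~\ref{lem:fourier}, $(i\omega_nI-A)\hat u_n=\hat f_n$. Because $i\omega_n\in\rho(A)$, this gives $\hat u_n=-R_{i\omega_n}\hat f_n$. In particular,
\[
\hat u_{n_k}=-(1+\abs{\omega_{n_k}})^{-k}R_{i\omega_{n_k}}g_k .
\]
Hence $\norm{\hat u_{n_k}}\ge k/2\to\infty$, which contradicts square-summability. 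So there is no $T$-periodic mild solution, and Theorem~\ref{thm:equiv} ((i)$\Rightarrow$(ii),(iii)) yields that $(T,f)$ is both a statically and a secularly resonant pair.

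\emph{Main obstacle.} The only delicate point is the equivalence in Remark~\ref{rem:loss-resolvent}, namely that infinite loss in the sense of Definition~\ref{def:finite-infinite-loss} really gives superpolynomial sampled resolvent growth. Since the remark is stated earlier, I take it as given. If it needed checking, the direction used here goes by contraposition. Suppose $\norm{R_{i\omega_n}}\le C(1+\abs{\omega_n})^{m-1}$ for all $n$. Then Parseval and Lemma~\ref{lem:fourier} give
\[
\norm{u}_{L^2}^2=T\sum_n\norm{R_{i\omega_n}\hat f_n}^2\le C^2T\sum_n(1+\abs{\omega_n})^{2(m-1)}\norm{\hat f_n}^2\lesssim\norm{f}_{H^{m-1}_\#}^2 ,
\]
which is exactly the estimate of Definition~\ref{def:finite-infinite-loss}, so $m$-derivative loss fails. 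Therefore $m$-loss for every $m$ forces $\limsup_n(1+\abs{\omega_n})^{-(m-1)}\norm{R_{i\omega_n}}=\infty$ for every $m$, and a diagonal choice over $m$ and $n$ then yields the subsequence required in Step 1.
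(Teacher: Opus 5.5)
Your proof is correct and follows the paper's stacking argument: pick lattice frequencies with superpolynomially large sampled resolvent, superpose single-mode forcings with rapidly decaying amplitudes to get an $f\in C^\infty_{\#}(0,T;H)$, then rule out a periodic solution via Lemma~\ref{lem:fourier} and Parseval, and invoke Theorem~\ref{thm:equiv}. The differences are cosmetic: the paper rescales each mode so that $\norm{\hat u_{n_k}}=1$, whereas yours satisfy $\norm{\hat u_{n_k}}\ge k/2$, and your contrapositive check of the direction of Remark~\ref{rem:loss-resolvent} used here, which the paper takes for granted, is also correct.
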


\begin{proof}
By \eqref{eq:sampled-growth} applied for each $m$, we may choose  distinct $n_k$ with $\abs{n_k}\to\infty$ and
\[
\norm{(i\omega_{n_k}I-A)^{-1}}\ \ge\ 2^{2k}\,(1+\abs{\omega_{n_k}})^{k}.
\]
Thence, by the operator norm, we may pick $x_k\in H$ with $\norm{x_k}=1$ and $$\norm{(i\omega_{n_k}I-A)^{-1}x_k}\ge\frac12\norm{(i\omega_{n_k}I-A)^{-1}};$$ then set
\[
y_k:=\frac{(i\omega_{n_k}I-A)^{-1}x_k}{\norm{(i\omega_{n_k}I-A)^{-1}x_k}},
\qquad
r_k:=\frac{x_k}{\norm{(i\omega_{n_k}I-A)^{-1}x_k}},
\]
so that $\norm{y_k}=1$, $(i\omega_{n_k}I-A)y_k=r_k$, and
$$\norm{r_k}\le2\norm{(i\omega_{n_k}I-A)^{-1}}^{-1}\le2^{1-2k}(1+\abs{\omega_{n_k}})^{-k}.$$ Define a periodic forcing
\[
f(t):=\sum_{k\ge1}e^{i\omega_{n_k}t}r_k .
\]
For every $q\ge0$ we have $\sum_k(1+\abs{\omega_{n_k}})^q\norm{r_k}<\infty$; hence the series and all its termwise time-derivatives converge absolutely and uniformly in $H$. We conclude that $f\in C^\infty_{\#}(0,T;H)$, and $\hat f_{n_k}=r_k$ with all other Fourier coefficients being zero. 

If a $T$-periodic mild solution $u$ existed, Lemma~\ref{lem:fourier} would give $(i\omega_{n_k}I-A)\hat u_{n_k}=r_k$, and since $i\omega_{n_k}\in\rho(A)$, $\hat u_{n_k}=y_k$ and so $\norm{\hat u_{n_k}}=1$ for every $k$. However, this contradicts Parseval's identity $\sum_n\norm{\hat u_n}^2=\frac1T\norm{u}_{L^2(0,T;H)}^2<\infty$. Theorem~\ref{thm:equiv} gives the remaining conclusions.
\end{proof}
The stacking construction above is abstract, as only $i\omega_n\in\rho(A)$ and \eqref{eq:sampled-growth} for each $m$ are at play, so the theorem applies to any generator with these properties.

\section{Further properties of resonance}\label{sec:props}
\subsection{Growth rates and stacking}\label{sec:growth}

Secular resonance asserts unbounded growth; we now quantify its possible rates. Classical finite dimensional resonance indeed provides the ceiling. From the uniform boundedness of the semigroup, we can read off the growth rate from the variation of parameters formula.
\begin{proposition}\label{prop:linear-upper}
Let $A$ generate a uniformly bounded $C_0$-semigroup on $H$ satisfying \eqref{eq:bdd} and let $f\in L^1_{\#}(0,T;H)$. Every mild solution of \eqref{eq:main} satisfies
\[
\norm{u(NT)}\le M\norm{u(0)}+NM\norm{b_f},\qquad N\in\N,
\]
and, for $0\le\tau\le T$, $\norm{u(NT+\tau)}\le M\norm{u(NT)}+M\int_0^T\norm{f(s)}\,ds$.
\end{proposition}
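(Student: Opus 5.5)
The plan is to read both bounds directly off the monodromy relation \eqref{eq:monodromy} from the proof of Theorem~\ref{thm:equiv}, together with the uniform bound \eqref{eq:bdd}. No spectral input is needed. The only ingredients are the semigroup property and the variation of parameters formula \eqref{eq:mild}, applied on successive period intervals.

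For the first estimate, I start from
\[
u(NT)=V^Nu_0+S_N^Vb_f, \qquad V=S(T), \qquad S_N^V=\sum_{k=0}^{N-1}V^k .
\]
Since $V^k=S(kT)$, each power satisfies $\norm{V^k}\le M$ by \eqref{eq:bdd}. This gives $\norm{V^Nu_0}\le M\norm{u(0)}$. The sum $S_N^Vb_f$ has $N$ terms, each of norm at most $M\norm{b_f}$, so $\norm{S_N^Vb_f}\le NM\norm{b_f}$, and the triangle inequality finishes the first claim. If one wants \eqref{eq:monodromy} justified directly rather than quoted, I would verify it by induction. Split the integral in \eqref{eq:mild} at $nT$, and use periodicity of $f$ via the substitution $s\mapsto s+nT$. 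This gives
\[
u((n+1)T)=S(T)u(nT)+\int_0^T S(T-s)f(s)\,ds .
\]

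For the second estimate, I use the same splitting with $t=NT+\tau$. This gives
\[
u(NT+\tau)=S(\tau)u(NT)+\int_0^\tau S(\tau-s)f(NT+s)\,ds .
\]
By periodicity, $f(NT+s)=f(s)$ for a.e.\ $s$. Then I bound $\norm{S(\tau)}\le M$ and $\norm{S(\tau-s)}\le M$, and enlarge the integration range from $[0,\tau]$ to $[0,T]$.

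I do not expect a real obstacle here. The only point needing care is the restart identity for mild solutions, namely that $t\mapsto u(NT+t)$ is the mild solution with initial value $u(NT)$ and forcing $f(NT+\cdot)=f$. This follows from the semigroup property $S(NT+\tau-s)=S(\tau)S(NT-s)$ inside the Bochner integral, which is legitimate because $S(\tau)$ is bounded and commutes with integration.
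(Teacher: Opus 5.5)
Your proposal is correct and follows the argument the paper has in mind. The paper gives no separate proof; it simply says the bounds are read off from the variation of parameters formula, that is, from $u(NT)=V^Nu_0+S_N^Vb_f$ with $\norm{V^k}\le M$ and from the restart identity $u(NT+\tau)=S(\tau)u(NT)+\int_0^\tau S(\tau-s)f(s)\,ds$, which is exactly what you do.
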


Point-spectral (classical) resonance yields a solution $u(t)=te^{i\omega t}\phi$ of Proposition~\ref{prop:spec-sec}, which achieves that linear growth. Within the partially dissipative class, as we shall see, the ceiling is not reached.

\begin{corollary}[Sublinear growth under strong stability]\label{cor:sublinear-ss}
If $S(t)$ is strongly stable, in addition to the assumptions of Proposition~\ref{prop:linear-upper}, then every mild solution of \eqref{eq:main} with $T$-periodic $f\in L^1_{\#}(0,T;H)$ satisfies $\norm{u(t)}_H=o(t)$ as $t\to\infty$. Secular growth in the partially dissipative class is thus sublinear.
\end{corollary}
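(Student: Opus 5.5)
Our plan is to reduce the claim to the stroboscopic samples and then control the kick sums by an ergodic-averaging argument. By Proposition~\ref{prop:linear-upper}, for $t=NT+\tau$ with $0\le\tau\le T$,
\[
\norm{u(t)}\le M\norm{u(NT)}+M\norm{f}_{L^1(0,T;H)} .
\]
Since $t\sim NT$, it therefore suffices to prove $\norm{u(NT)}/N\to0$. By \eqref{eq:monodromy} we have $u(NT)=V^Nu_0+S_N^Vb$ with $V=S(T)$ and $b=b_f$. The first term is bounded by $M\norm{u_0}$, so everything reduces to the claim $\frac1N\norm{S_N^Vb}\to0$ for every $b\in H$.

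For this claim we use Lemma~\ref{lem:MET}. Strong stability gives $\overline{\mathcal R(I-V)}=H$, so we fix $\eps>0$ and choose $y\in H$ with $\norm{b-(I-V)y}\le\eps$. Writing $b=(I-V)y+e$, the computation in the proof of Lemma~\ref{lem:ergodic} gives $S_N^V(I-V)y=(I-V^N)y$, whose norm is at most $(1+M)\norm{y}$. For the remainder we use power boundedness, $\norm{S_N^Ve}\le NM\eps$. Combining the two estimates,
\[
\limsup_{N\to\infty}\frac1N\norm{S_N^Vb}\le M\eps ,
\]
and since $\eps$ is arbitrary the limit is $0$. This is the mean ergodic theorem for the power-bounded operator $V$ on the dense range.

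One could instead argue directly via strong stability, since $V^kb\to0$ implies the Cesàro means $\frac1N\sum_{k<N}V^kb\to0$; this is even simpler and avoids Lemma~\ref{lem:MET}. We will use that version: $\norm{V^kb}=\norm{S(kT)b}\to0$, and Cesàro averages of a null sequence are null.

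There is no serious obstacle here. The only points needing care are the interpolation between sample times, which is handled by the second estimate of Proposition~\ref{prop:linear-upper}, and converting $o(N)$ into $o(t)$ using $N\le t/T$.
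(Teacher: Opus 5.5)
Your proof is correct, and its skeleton matches the paper's: reduce to the sample times $NT$ via the second estimate of Proposition~\ref{prop:linear-upper}, write $u(NT)=V^Nu_0+S_N^Vb$, and show $\frac1N S_N^Vb\to0$. The only difference is the key step. The paper gets $\frac1N S_N^Vb\to0$ from the Mean Ergodic Theorem (Lemma~\ref{lem:MET}, following Galdi). That is precisely your first argument: approximate $b$ from the dense range of $I-V$ and use power boundedness on the remainder.

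The version you finally adopt uses strong stability directly. Since $\norm{V^kb}=\norm{S(kT)b}\to0$, the Ces\`aro means $\frac1N\sum_{k<N}V^kb$ vanish.

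Your direct route is more elementary and self-contained. It needs no ergodic theorem and no Hilbert or reflexivity structure, so it works verbatim in any Banach space.

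The ergodic route has a different advantage: it uses strong stability only through $\mathcal N(I-S(T))=\{0\}$. It therefore proves $\norm{u(t)}=o(t)$ under the weaker hypothesis that $S(T)$ has no nonzero fixed vector. This covers, for instance, the unitary multiplication group of Proposition~\ref{prop:notconverse}. That group is not strongly stable, yet it shows the sublinear $\sqrt t$ growth cited in Remark~\ref{rem:sublinear}, which your direct argument cannot reach.
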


\begin{proof}
Recall the notations $V=S(T)$ and $S_N^V b=\sum_{k=0}^{N-1}V^kb$. As observed in \cite[pp. 220--221]{Galdi}, we can infer from the Mean Ergodic Theorem (Lemma~\ref{lem:MET}) in the strongly stable case that the Ces\`aro averages go to zero $\displaystyle\frac1NS_N^Vb\to0$ for every $b\in H$. 

Hence $\norm{u(NT)}\le M\norm{u(0)}+N\norm{\frac1NS_N^Vb_f}=o(N)$, and the second estimate of Proposition~\ref{prop:linear-upper} extends this to $\norm{u(t)}=o(t)$. 
\end{proof}

\begin{remark}\label{rem:sublinear} The gain over Proposition~\ref{prop:linear-upper} comes from strong stability, as power-boundedness alone gives only $\norm{S_N^Vb_f}\le NM\norm{b_f}$, and $O(N)$ is thereby improved to $o(N)$. Genuinely sublinear rates occur: the continuous-spectrum example of Proposition~\ref{prop:notconverse} grows as $\sqrt t$. An interesting area of inquiry is the classification of the rates attainable within the strongly stable framework. \end{remark}

We again turn to the stacking mechanism, this time in the context of \emph{finite derivative loss}. The two forthcoming statements should be calibrated against Theorem~\ref{thm:loss-secular}. A polynomial sampled bound protects all smooth forcings, while an attained polynomial lower bound produces resonance from forcings of finite Sobolev regularity.

\begin{proposition}[Finite loss protects smooth forcings]\label{prop:conditional-converse-loss}
Fix $T>0$ and suppose $i\omega_n\in\rho(A)$ for every $n\in\Z$ together with the sampled bound
\begin{equation}\label{eq:sampled-poly}
\norm{(i\omega_nI-A)^{-1}}\le C(1+\abs{\omega_n})^m,\qquad n\in\Z,
\end{equation}
for some $m\in\N$ and $C>0$. Then every $f\in C^\infty_{\#}(0,T;H)$ admits a unique classical $T$-periodic solution in $C^\infty_{\#}(0,T;H)$. Consequently, a smooth statically resonant forcing necessitates infinite derivative loss.
\end{proposition}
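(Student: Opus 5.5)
The plan is to construct the periodic solution directly by its temporal Fourier series, and then to pass from the Fourier description to the mild formulation. Take $f\in C^\infty_{\#}(0,T;H)$ with coefficients $\hat f_n$. Since $f$ is smooth and periodic, integration by parts gives $\norm{\hat f_n}\le C_q(1+\abs{\omega_n})^{-q}$ for every $q\ge0$; explicitly, $\hat f_n=(i\omega_n)^{-q}\widehat{(\partial_t^qf)}_n$ for $n\neq0$. Define
\[
\hat u_n:=(i\omega_nI-A)^{-1}\hat f_n\in\dom(A),\qquad u(t):=\sum_{n\in\Z}e^{i\omega_nt}\hat u_n .
\]
The sampled bound \eqref{eq:sampled-poly} gives $\norm{\hat u_n}\le C(1+\abs{\omega_n})^{m-q}$ for every $q$, so the $\hat u_n$ decay faster than any polynomial. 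Hence the series for $u$ and all its termwise time derivatives converge absolutely and uniformly in $H$, and $u\in C^\infty_{\#}(0,T;H)$.

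Next I check that $u$ is a classical solution. The identity $A\hat u_n=i\omega_n\hat u_n-\hat f_n$ shows that $\norm{A\hat u_n}$ also decays rapidly, so the series $\sum_n e^{i\omega_nt}A\hat u_n$ converges uniformly. Since $A$ is closed, $u(t)\in\dom(A)$ and $Au(t)=\sum_n e^{i\omega_nt}A\hat u_n$. Summing the modewise identity then gives $\dot u=Au+f$ pointwise, and $Au$ is continuous, indeed smooth. A classical solution is a mild solution: differentiate $s\mapsto S(t-s)u(s)$ on $[0,t]$ and integrate to recover \eqref{eq:mild}. Thus $u$ is a $T$-periodic mild solution.

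Uniqueness follows from Lemma~\ref{lem:fourier}. Any $T$-periodic mild solution $v$ has coefficients satisfying $(i\omega_nI-A)\hat v_n=\hat f_n$, so $\hat v_n=\hat u_n$ because $i\omega_n\in\rho(A)$. Two continuous periodic functions with equal Fourier coefficients coincide, by Fej\'er summation or Parseval applied to $u-v$. (Alternatively, uniqueness follows from strong stability through Lemma~\ref{lem:MET}, though the Fourier argument does not need that hypothesis.)

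The final sentence of the proposition is the contrapositive. Suppose some smooth $f$ admits no $T$-periodic mild solution while all $i\omega_n\in\rho(A)$. Then \eqref{eq:sampled-poly} must fail for every $m$, which by Remark~\ref{rem:loss-resolvent} is precisely infinite derivative loss. If instead some $i\omega_n\in\sigma(A)$, the setting falls outside this proposition, since the statement concerns the resolvent-set hypothesis.

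The main obstacle I expect is the closedness argument placing $u(t)$ in $\dom(A)$ and justifying the termwise identity for $Au$. It is handled by the rapid decay of $A\hat u_n$, obtained through the resolvent identity above rather than from any bound on $A$ itself.
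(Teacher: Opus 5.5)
Your proposal is correct and follows the paper's proof step for step. The shared steps are Fourier synthesis $\hat u_n=(i\omega_nI-A)^{-1}\hat f_n$, rapid coefficient decay from \eqref{eq:sampled-poly}, closedness of $A$ via $A\hat u_n=i\omega_n\hat u_n-\hat f_n$, and uniqueness from Lemma~\ref{lem:fourier} with $i\omega_n\in\rho(A)$. Your differences are minor: you use pointwise coefficient decay instead of weighted $\ell^2$ sums, and you spell out two details the paper leaves implicit, namely that the classical solution is mild and the contrapositive via Remark~\ref{rem:loss-resolvent}.
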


\begin{proof}
Classical Fourier theory ensures that smoothness of $f$ yields $\sum_n(1+\abs{\omega_n})^{2q}\norm{\hat f_n}^2<\infty$ for every $q\ge0$. Set $u_n:=(i\omega_nI-A)^{-1}\hat f_n$; by \eqref{eq:sampled-poly},
\[
\sum_n(1+\abs{\omega_n})^{2q}\norm{u_n}^2\lesssim\sum_n(1+\abs{\omega_n})^{2(q+m)}\norm{\hat f_n}^2<\infty
\]
for every $q$, so $u(t):=\sum_ne^{i\omega_nt}u_n$ converges, with all its time-derivatives, absolutely and uniformly in $H$. Since $Au_n=i\omega_nu_n-\hat f_n$, the series for $Au$ converges in the same manner. Thus, $u(t)\in\dom(A)$, $u\in C^\infty_{\#}(0,T;H)$, and $\dot u=Au+f$ holds coefficient-wise. This means that $u$ is a classical $T$-periodic solution and uniqueness follows from Lemma~\ref{lem:fourier} with $i\omega_n\in\rho(A)$.
\end{proof}

Now, let us return to the notion of finite derivative loss, in particular, as described in Remark \ref{rem:loss-resolvent}. Assuming that a given dynamics exhibits finite loss, we can infer the existence of a sequence of frequencies which can be ``stacked" to violate periodic existence, as seen below. 
\begin{proposition}[Stacking at attained finite loss]\label{prop:finite-stacking}
Fix $T>0$ and fix an integer $m\ge1$, then suppose $i\omega_n\in\rho(A)$ for every $n\in\Z$. Suppose there are distinct $n_k$ with $\abs{n_k}\to\infty$ and $c>0$ such that
\[
\norm{(i\omega_{n_k}I-A)^{-1}}\ \ge\ c\,(1+\abs{\omega_{n_k}})^{m}.
\]
Then there exists $f\in H^{m-1}_{\#}(0,T;H)$ admitting no $T$-periodic mild solution; the pair $(T,f)$ is statically, hence secularly, resonant.
\end{proposition}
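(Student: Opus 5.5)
We mimic the stacking construction of Theorem~\ref{thm:loss-secular}, now with only a polynomial lower bound, so the summability has to be calibrated so that $f$ lands in $H^{m-1}_{\#}$ while the response coefficients still fail to be square-summable.

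First, passing to a subsequence, we may assume $\abs{\omega_{n_k}}$ grows as fast as we like, say $(1+\abs{\omega_{n_k}})\ge 2^k$. For each $k$, pick $x_k\in H$ with $\norm{x_k}=1$ and $\norm{(i\omega_{n_k}I-A)^{-1}x_k}\ge\frac12\norm{(i\omega_{n_k}I-A)^{-1}}$. Set $z_k:=(i\omega_{n_k}I-A)^{-1}x_k$, so that
\[
\norm{z_k}\ge \tfrac c2(1+\abs{\omega_{n_k}})^m.
\]
Next choose scalar weights $a_k>0$ with $\sum_k a_k^2=\infty$, for instance $a_k=k^{-1/2}$. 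Put
\[
r_k:=a_k\,\frac{x_k}{\norm{z_k}}, \qquad y_k:=a_k\,\frac{z_k}{\norm{z_k}},
\]
so that $(i\omega_{n_k}I-A)y_k=r_k$, $\norm{y_k}=a_k$, and $\norm{r_k}\le \frac2c a_k(1+\abs{\omega_{n_k}})^{-m}$.

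Define
\[
f(t):=\sum_k e^{i\omega_{n_k}t}r_k .
\]
Its $H^{m-1}_{\#}$ norm squared is comparable to
\[
\sum_k(1+\abs{\omega_{n_k}})^{2(m-1)}\norm{r_k}^2\lesssim\sum_k a_k^2(1+\abs{\omega_{n_k}})^{-2}\le\sum_k k^{-1}4^{-k}<\infty,
\]
so $f\in H^{m-1}_{\#}(0,T;H)$ with $\hat f_{n_k}=r_k$ and all other coefficients zero. (In fact the extra factor $(1+\abs{\omega_{n_k}})^{-2}$ shows one could even take $a_k\equiv1$, since $\sum_k(1+\abs{\omega_{n_k}})^{-2}<\infty$ by the sparsification; either choice works.) Since $m\ge1$, $f\in L^2_{\#}\subset L^1_{\#}$, so $f$ is an admissible mild forcing.

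Suppose a $T$-periodic mild solution $u$ existed. Lemma~\ref{lem:fourier} together with $i\omega_{n_k}\in\rho(A)$ forces $\hat u_{n_k}=y_k$. Then Parseval gives
\[
\frac1T\norm{u}^2_{L^2(0,T;H)}\ge\sum_k a_k^2=\infty,
\]
which is a contradiction. Theorem~\ref{thm:equiv} then upgrades this to static and secular resonance.

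The only delicate point is the bookkeeping between the $H^{m-1}$ weight $(1+\abs{\omega})^{m-1}$ and the resolvent lower bound exponent $m$. The one-power gap is exactly what lets $\norm{\hat u_{n_k}}$ be non-summable while $f$ stays in $H^{m-1}$. We must also make sure the $H^{m-1}_{\#}$ norm is indeed equivalent to the weighted Fourier $\ell^2$ norm, which holds for periodic vector-valued Sobolev spaces via Parseval.
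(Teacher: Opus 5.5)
Your proposal is correct and is essentially the paper's proof. It uses the same near-maximizing unit vectors $x_k$, the same weights $k^{-1/2}$ on the stacked sampled modes, the same one-power gap between the $H^{m-1}_{\#}$ weight and the resolvent exponent $m$, and the same Parseval contradiction via Lemma~\ref{lem:fourier} and Theorem~\ref{thm:equiv}. The only differences are cosmetic: you sparsify to $1+\abs{\omega_{n_k}}\ge 2^k$ where the paper uses the linear bound $1+\abs{\omega_{n_k}}\ge c_Tk$. Your side remark that $a_k\equiv1$ also works is correct, and it does not even need sparsification, since distinct integers $n_k$ already give $\sum_k(1+\abs{\omega_{n_k}})^{-2}<\infty$.
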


\begin{proof}
Passing to a subsequence and relabelling, we may assume $\abs{n_1}<\abs{n_2}<\cdots$, whence $$1+\abs{\omega_{n_k}}\ge c_Tk~~\text{with}~~ c_T>0.$$ As in the proof of Theorem~\ref{thm:loss-secular}, choose unit vectors $x_k$ with $\norm{(i\omega_{n_k}I-A)^{-1}x_k}\ge\frac12\norm{(i\omega_{n_k}I-A)^{-1}}$ and again set
\[
y_k:=\frac{(i\omega_{n_k}I-A)^{-1}x_k}{\norm{(i\omega_{n_k}I-A)^{-1}x_k}},
\qquad
r_k:=\frac{x_k}{\norm{(i\omega_{n_k}I-A)^{-1}x_k}},
\]
so that $\norm{y_k}=1$, $(i\omega_{n_k}I-A)y_k=r_k$ and $\norm{r_k}\le\frac2c(1+\abs{\omega_{n_k}})^{-m}$. Define
\[
f(t):=\sum_{k\ge1}k^{-1/2}e^{i\omega_{n_k}t}r_k .
\]
For $0\le q\le m-1$,
\[
\sum_k k^{-1}(1+\abs{\omega_{n_k}})^{2q}\norm{r_k}^2
\le\frac4{c^2}\sum_kk^{-1}(1+\abs{\omega_{n_k}})^{-2(m-q)}
\le\frac{4}{c^2c_T^2}\sum_kk^{-3}<\infty,
\]
so $f\in H^{m-1}_{\#}(0,T;H)$. If a $T$-periodic mild solution $u$ existed, Lemma~\ref{lem:fourier} and $i\omega_{n_k}\in\rho(A)$ would give $\hat u_{n_k}=k^{-1/2}y_k$, hence $\sum_n\norm{\hat u_n}^2\ge\sum_kk^{-1}=\infty$, contradicting Parseval's identity for $u\in L^2(0,T;H)$.
\end{proof}

Together, the two propositions calibrate resonance within the Sobolev scale. When finite derivative loss is observed, the sampled resolvent grows exactly like $(1+\abs{\omega_n})^m$ along a subsequence; in this case, smooth forcings are ``safe". Yet, resonance must occur from some forcings in $H^{m-1}_{\#}(0,T;H)$. In particular, {\em when finite loss is observed the resonant forcing guaranteed by the theory in \cite{Galdi} is necessarily non-smooth}, and the loss order measures precisely how rough a resonant forcing must be.

\subsection{General sampled growth spaces}\label{sec:loss}

In the final part of this section, we describe how sampled resolvent information can allow us to build  PWP spaces directly, without any necessary polynomial structure. We will assume throughout this subsection that $i\omega_n\in\rho(A)$ for every $n\in\Z$. For $F=\sum_ne^{i\omega_nt}F_n\in L^2_{\#}(0,T;H)$, we will associate the formal periodic response $$u \sim \sum_ne^{i\omega_nt}(i\omega_nI-A)^{-1}F_n,$$ and square-integrability of the response is exactly membership in the class we will now describe.

\begin{definition}[Sampled response space]\label{def:weighted-space}
\[
\mathfrak W_T(A):=\Bigl\{F=\textstyle\sum_ne^{i\omega_nt}F_n\in L^2_{\#}(0,T;H):\ 
\norm{F}_{\mathfrak W_T(A)}^2:=\sum_{n\in\Z}\norm{(i\omega_nI-A)^{-1}F_n}^2<\infty\Bigr\}.
\]
\end{definition}

From the above definition, we are able to produce general PWP spaces as well as connect to an explicit observation in \cite{Galdi}. Namely, the density of trigonometric polynomials used in (i) of Proposition~\ref{prop:weighted-pwp} is \cite[Lemma 3.2]{Galdi}, and the solvability for finitely many modes is the mechanism of \cite[Theorem 3.5]{Galdi}.

\begin{proposition}[Sampled-growth periodic well-posedness]\label{prop:weighted-pwp}
Assume $i\omega_n\in\rho(A)$ for all $n\in\Z$.
\begin{enumerate}[label=\textup{(\roman*)}]
\item Every $H$-valued trigonometric polynomial $F$ admits a unique $T$-periodic mild solution, which is classical and a trigonometric polynomial with values in $\dom(A)$. Since the trigonometric polynomials are dense in $L^p_{\#}(0,T;H)$, $1\le p<\infty$, they form an explicit dense periodic well-posedness class.
\item If $F\in L^2_{\#}(0,T;H)$ admits a $T$-periodic mild solution, then $F\in\mathfrak W_T(A)$. Conversely, every $F\in\mathfrak W_T(A)$ admits a unique $T$-periodic solution in the $L^2$-Fourier sense---a function $U\in L^2_{\#}(0,T;H)$ whose modes satisfy \eqref{eq:modes}---and $\norm{U}_{L^2(0,T;H)}^2=T\norm{F}_{\mathfrak W_T(A)}^2$.
\end{enumerate}
\end{proposition}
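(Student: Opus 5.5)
For (i), I would construct the solution directly and then identify it with the mild solution. Write $F(t)=\sum_{|n|\le N}e^{i\omega_nt}F_n$ and set $u_n:=(i\omega_nI-A)^{-1}F_n\in\dom(A)$, which is possible since $i\omega_n\in\rho(A)$. Then $u(t):=\sum_{|n|\le N}e^{i\omega_nt}u_n$ is a $\dom(A)$-valued trigonometric polynomial, and $\dot u-Au=\sum e^{i\omega_nt}(i\omega_nI-A)u_n=F$. So $u$ is a classical solution, and classical solutions are mild (the standard computation $\frac{d}{ds}S(t-s)u(s)=S(t-s)F(s)$).

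Uniqueness of the periodic mild solution comes from Lemma~\ref{lem:fourier}. Any $T$-periodic mild solution $v$ has coefficients satisfying $(i\omega_nI-A)\hat v_n=\hat F_n$. Invertibility forces $\hat v_n=u_n$ for all $n$. Since both $u$ and $v$ are continuous, hence in $L^2$, Fourier uniqueness gives $v=u$. (Alternatively, strong stability with Lemma~\ref{lem:MET} gives uniqueness, but the Fourier route uses only the hypothesis at hand.) Density of $H$-valued trigonometric polynomials in $L^p_\#(0,T;H)$ for $1\le p<\infty$ is standard: Fejér means converge in $L^p$ for Bochner functions, or one approximates by simple functions and uses the scalar case. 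This is \cite[Lemma 3.2]{Galdi}.

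For the first half of (ii), I would take $F\in L^2_\#$ with a periodic mild solution $u\in C_\#\subset L^2_\#(0,T;H)$. Lemma~\ref{lem:fourier} gives $\hat u_n=(i\omega_nI-A)^{-1}F_n$, and Parseval gives $\sum_n\|\hat u_n\|^2=\frac1T\|u\|_{L^2}^2<\infty$. This is exactly $F\in\mathfrak W_T(A)$.

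For the converse in (ii), given $F\in\mathfrak W_T(A)$, I would define $U:=\sum_n e^{i\omega_nt}(i\omega_nI-A)^{-1}F_n$. The series converges in $L^2_\#(0,T;H)$ by Riesz–Fischer for Hilbert-valued Fourier series. Its modes are $\hat U_n=(i\omega_nI-A)^{-1}F_n\in\dom(A)$, so they satisfy \eqref{eq:modes}. Parseval yields $\|U\|^2_{L^2}=T\sum\|\hat U_n\|^2=T\|F\|^2_{\mathfrak W_T}$. Uniqueness in this $L^2$-Fourier sense is again immediate, because \eqref{eq:modes} determines every coefficient.

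The only point needing care is this normalization. With $\hat u_n=\frac1T\int_0^T e^{-i\omega_n t}u\,dt$, Parseval reads $\|u\|^2_{L^2(0,T;H)}=T\sum\|\hat u_n\|^2$. I need the paper's coefficients $F_n$ in Definition~\ref{def:weighted-space} to be those same normalized coefficients, and I would state this consistently with the norm identity claimed. Otherwise nothing here is deep; the statement's ``$L^2$-Fourier sense'' is chosen precisely to avoid proving that $U$ is a mild solution, which may fail in general.
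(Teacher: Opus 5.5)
Your proof is correct and follows the paper's argument for (ii) essentially verbatim: Lemma~\ref{lem:fourier} plus Parseval gives the forward direction and the isometry, and Fourier synthesis gives the converse. For (i) the paper only cites Galdi, and your explicit argument (modal construction, classical solutions are mild, uniqueness from the Fourier coefficients) is the finitely-many-modes mechanism that citation points to.
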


\begin{proof}
(i) is taken from \cite[, pp.226--227]{Galdi}. For (ii), a mild solution is continuous, hence in $L^2(0,T;H)$, and Lemma~\ref{lem:fourier} gives $\hat U_n=(i\omega_nI-A)^{-1}F_n$. From Parseval we have $\frac1T\norm{U}_{L^2}^2=\sum_n\norm{(i\omega_nI-A)^{-1}F_n}^2$, which proves the forward direction and gives the isometry. For the converse, the same formula defines $U\in L^2_{\#}(0,T;H)$ by Fourier synthesis, and its modes satisfy \eqref{eq:modes} by construction.
\end{proof}
In the above theorem, we do note that the solution Fourier solution constructed in $\mathfrak W_T(A)$ should be confirmed to be a mild solution in a given context, with the regularity at hand. We do not belabor this point here in the general since the space $\mathfrak W_T(A)$ can be quite general and exhibit pathological regularity properties. 

Furthermore, the characterization of $\mathfrak W_T(A)$ lies entirely in our choice of norm, i.e., {\em is inherently topological}. Once the forcing is measured by the size of its own response, periodic solvability is automatic, no matter how rapidly the sampled resolvent grows. Part (i) realizes explicitly, in the Fourier setting, the dense well-posedness class whose existence Proposition~\ref{prop:Galdi} provides abstractly. More usable subclasses arise from ``majorants": namely, if~ $M_n(T)\ge\norm{(i\omega_nI-A)^{-1}}$, then
\[
\Bigl\{F:\ \sum_nM_n(T)^2\norm{F_n}^2<\infty\Bigr\}\subseteq\mathfrak W_T(A).
\]
Note that when $M_n(T)\lesssim(1+\abs{\omega_n})^m$, this class contains $H^m_{\#}(0,T;H)$ by that space's Fourier characterization. In this way, one recovers the finite derivative-loss PWP of \cite{GMW2026,LaurentRivas2026}, with the forcing paying the price of $m$ derivatives. However, when sampled growth is, for instance,  logarithmic, exponential along a sparse subsequence, or otherwise irregular, the natural forcing class is no longer expected to be a proper Sobolev space of the form $H^m(0,T;H)$, but a weighted space itself. 

We will see below that the above discussion will precisely apply to the resonant ellipse scenario for the heat-wave dynamics. For the periods of Theorem~\ref{thm:ellipse}(ii), no Sobolev-type estimate will be available, yet $\mathfrak W_T(\cA)$ will remain a dense class on which the periodic response is an isometry. {\em The failure to obtain a finite Sobolev-loss, therefore, does not mean failure of PWP.} Rather, it means the ``correct" forcing class is determined by weights that may vary irregularly from one sampled frequency to the next, rather than by a single uniform power of $\abs{\omega_n}$. We note that similar observations have been made concerning rates of decay and irregular resolvent growth in the semigroup/stability context \cite{RozendaalSeifertStahn2019,LaurentRivas2026}.

\section{Model problems}\label{sec:examples}

We now apply the notions of Sections~\ref{sec:taxonomy} and \ref{sec:props} on three models. First, a wave equation with exponentially weak damping, where every quantity is made explicit; secondly, the heat-wave system \eqref{eq:hw}, whose spectral and periodic properties we record for later involved discussion; and finally, a locally damped wave, which places the geometric mechanisms in a familiar setting. In each subsection $\cA$ and $\HH$ denote the generator and the energy space of the model under discussion.

\subsection{Pseudo-spectrally damped wave equation}\label{sec:pdwave}

The simplest realization of an example of infinite derivative loss can be arrived at for a wave equation. We consider  damping that is exponentially weak at high spatial frequencies. The damped generator's spectrum misses the imaginary axis, so no spectral obstruction is available, and the stacking argument from the proof of Theorem~\ref{thm:loss-secular} applied directly in closed form.

Let $x\in\T=\R/2\pi\Z$, let $D=-i\partial_x$, and let $|D|$ be the operator obtained from the symbol $\abs{n}$ so we observe $e^{-|D|}e^{inx} = e^{-n}e^{inx}$. The operator $e^{-|D|}$ then is a positive, smoothing, self-adjoint contraction, which is exponentially weak on high frequencies. Consider
\begin{equation}\label{eq:pdwave}
u_{tt}-u_{xx}+e^{-|D|}u_t=f,\qquad x\in\T,
\end{equation}
restricted to mean-zero functions. With $U=(u,u_t)$, the semigroup generator is
\[
\cA=\begin{pmatrix}0&I\\ \partial_x^2&-e^{-|D|}\end{pmatrix}
\quad\text{on}\quad
\HH=\dot H^1(\T)\times L^2_0(\T),
\qquad \norm{(u,v)}_{\HH}^2=\norm{u_x}_{L^2}^2+\norm{v}_{L^2}^2,
\]
and the energy satisfies $$\frac{d}{dt}\norm{U}_{\HH}^2=-2\norm{e^{-|D|/2}u_t}_{L^2}^2\le0.$$ On the mode $e^{inx}$, the modal equation reads $\ddot q+e^{-\abs n}\dot q+n^2q=0$, with roots
\[
\lambda_{n,\pm}=-\frac{e^{-\abs n}}{2}\pm i\sqrt{n^2-\frac{e^{-2\abs n}}{4}} .
\]
We observe that the eigenvalues of $\cA$ are precisely the $\lambda_{n,\pm}$, $n\neq0$, a set with no finite accumulation point, lying in the open left half-plane with real parts tending to $0$ as $\abs n\to\infty$. Since $\abs{\operatorname{Im}\lambda_{n,\pm}}\sim\abs n$, the ``modal" resolvent points are uniformly bounded at each imaginary point, so $i\R\subset\rho(\cA)$.
The system is therefore not spectrally resonant in the sense of Definition~\ref{def:spectrum}. Each mode decays and the semigroup is contractive, from which we may infer that $S(t)$ is strongly stable. The modal rates $e^{-\abs n}/2$ tend to $0$, so uniform stability is not possible. The model is truly partially dissipative in the sense of Section~\ref{sec:terminology} in that $S(t)$ is strongly but not uniformly stable. 

\begin{proposition}[Infinite loss and smooth resonance for \eqref{eq:pdwave}]\label{prop:pdwave}
Take $T=2\pi$, so that $\omega_n=n$. Then
\[
\norm{(inI-\cA)^{-1}}_{\cL(\HH)}\ \ge\ \sqrt2\,e^{n},\qquad n\ge1,
\]
and $\cA$ exhibits infinite derivative loss for the period $2\pi$. Moreover, with $e_n:=(2\pi)^{-1/2}e^{inx}$, the forcing
\[
F(t)=\sum_{n\ge1}e^{-n}e^{int}\,(0,e_n)\ \in\ C^\infty_{\#}(0,2\pi;\HH)
\]
admits no $2\pi$-periodic mild solution; the pair $(2\pi,F)$ is statically, hence secularly, resonant.
\end{proposition}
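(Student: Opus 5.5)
The plan is to compute the resolvent of $\cA$ exactly on the single spatial mode $e_n$ at the sampled frequency $i\omega_n=in$, and then read off both claims from results already established. The first claim follows from Remark~\ref{rem:loss-resolvent}. The second is a closed-form instance of the stacking argument of Theorem~\ref{thm:loss-secular}, closed off by Lemma~\ref{lem:fourier}, Parseval, and Theorem~\ref{thm:equiv}. We already know that $i\R\subset\rho(\cA)$ and that $S(t)$ is a strongly stable contraction semigroup, so the standing hypotheses of those results hold.

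\emph{Step 1 (modal resolvent).} Fix $n\ge1$ and $g\in\C$, and solve $(inI-\cA)(u,v)=(0,g\,e_n)$ with the ansatz $u=a\,e_n$, $v=b\,e_n$. Since $\cA(u,v)=(v,\;u_{xx}-e^{-|D|}v)$, the first row gives $v=in\,u$, i.e.\ $b=in\,a$. The second row reads
\[
in\,v-u_{xx}+e^{-|D|}v=g\,e_n .
\]
Substituting $v=in\,a\,e_n$ and $u_{xx}=-n^2a\,e_n$ gives $(-n^2+n^2+in\,e^{-n})a=g$. The two $n^2$ terms cancel exactly; this cancellation is the whole mechanism. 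Hence $a=e^{n}g/(in)$. In the energy norm,
\[
\norm{u_x}_{L^2}=n\abs a=e^n\abs g,\qquad \norm{v}_{L^2}=n\abs a=e^n\abs g,
\]
so $\norm{(inI-\cA)^{-1}(0,g e_n)}_{\HH}=\sqrt2\,e^n\abs g$, while $\norm{(0,g e_n)}_{\HH}=\abs g$. This yields the lower bound $\norm{(inI-\cA)^{-1}}\ge\sqrt2\,e^n$.

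\emph{Step 2 (infinite loss).} For every $m\ge1$ we have $(1+n)^{-(m-1)}\sqrt2\,e^n\to\infty$, so \eqref{eq:sampled-growth} holds for each $m$. By Remark~\ref{rem:loss-resolvent}, $\cA$ therefore exhibits $m$-derivative loss for every $m$, which is infinite loss at $T=2\pi$.

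\emph{Step 3 (smooth resonant forcing).} The $n$-th temporal Fourier coefficient of $F$ is $\hat F_n=e^{-n}(0,e_n)$ for $n\ge1$, and all others vanish. Since $\sum_n (1+n)^q e^{-n}<\infty$ for every $q$, the series and all its termwise time derivatives converge absolutely and uniformly, so $F\in C^\infty_{\#}(0,2\pi;\HH)$. Suppose a $2\pi$-periodic mild solution $U$ existed. Lemma~\ref{lem:fourier}, together with $in\in\rho(\cA)$, forces $\hat U_n=(inI-\cA)^{-1}\hat F_n$. By Step 1 with $g=e^{-n}$, this gives $\norm{\hat U_n}_{\HH}=\sqrt2$ for all $n\ge1$. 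That contradicts Parseval, since $\sum_n\norm{\hat U_n}^2=\frac1{2\pi}\norm{U}^2_{L^2(0,2\pi;\HH)}<\infty$ for the continuous function $U$. Theorem~\ref{thm:equiv} then upgrades static resonance to secular resonance.

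There is no real obstacle here. The only point requiring care is the sign convention in Step 1. One must use the operator $(i\omega_nI-\cA)$ as in \eqref{eq:modes}, and check that the damping term $+e^{-|D|}v$ enters with the sign that makes the $n^2$ terms cancel, leaving the pure factor $in\,e^{-n}$.
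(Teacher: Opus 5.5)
Your proposal is correct and follows the paper's proof essentially line by line. The paper gets the resolvent bound by applying $inI-\cA$ to the normalized quasimode $Z_n=(e_n,in\,e_n)$, which is your modal inversion read in the opposite direction. It then concludes exactly as you do, via Remark~\ref{rem:loss-resolvent}, Lemma~\ref{lem:fourier}, Parseval, and Theorem~\ref{thm:equiv}.
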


\begin{proof}
Put $Z_n=(e_n,in\,e_n)$. Then $\norm{Z_n}_{\HH}^2=n^2+n^2=2n^2$ and
\[
(inI-\cA)Z_n=\bigl(in\,e_n-in\,e_n,\ -n^2e_n+n^2e_n+in\,e^{-n}e_n\bigr)=(0,\ in\,e^{-n}e_n),
\]
of norm $\abs n e^{-n}$. With $Y_n:=Z_n/\norm{Z_n}_{\HH}$ we get $\norm{Y_n}_{\HH}=1$ and $\norm{(inI-\cA)Y_n}_{\HH}=e^{-n}/\sqrt2$, from which we obtain the resolvent bound. Since $e^n$ beats every polynomial growth, \eqref{eq:sampled-growth} holds for every $m$, which provides infinite derivative loss by Remark~\ref{rem:loss-resolvent}.

For the forcing, we choose $\widehat F_n=(0,e^{-n}e_n)$, and, by construction, this has $\norm{\widehat F_n}_{\HH}=e^{-n}$. Then we observe $\sum_n(1+n)^q\norm{\widehat F_n}_{\HH}<\infty$ for every $q\ge0$ and so $F\in C^\infty_{\#}(0,T;\mathcal H)$. Solving $(inI-\cA)(u,v)=(0,e^{-n}e_n)$ with $u=Qe_n$ gives $v=inQe_n$ and $in\,e^{-n}Q=e^{-n}$, that is $Q=1/(in)$; the resulting mode $\widehat U_n=(Qe_n,inQe_n)$ has $\norm{\widehat U_n}_{\HH}^2=1+1=2$ for every $n$. If a $2\pi$-periodic mild solution existed, Lemma~\ref{lem:fourier} and $in\in\rho(\cA)$  force these $\widehat U_n$ to be its Fourier coefficients, contradicting Parseval. Theorem~\ref{thm:equiv} gives the remaining conclusions.
\end{proof}
Proposition~\ref{prop:pdwave} demonstrates that this system is secularly resonant without being spectrally resonant. We do not observe the spectrum ``touching" the imaginary axis, but rather asymptotically ``closing in" on it.

\subsection{The heat-wave system: spectrum and known periodic well-posedness}\label{sec:hwfacts}

We return to \eqref{eq:hw} and record the two essential facts about it used in the sequel. The first is that, like the model of Section~\ref{sec:pdwave} and in contrast with the classical picture, the heat-wave generator has no spectrum on the imaginary axis. {In fact, this holds with no geometric hypothesis beyond connectedness and a  general unique continuation property. We state the theorem here, but forgo the proof, as it reduces to a by now standard application of Holmgren's unique continuation theorem in the context of a compactness-uniqueness argument that precludes approximate eigenvectors.
\begin{lemma}[No imaginary spectrum]\label{lem:hw-nospec}
Assume, in addition to the standing hypotheses of Section~\ref{sec:heatwave}, that $\Om_W$ is connected and that $\Gamma$ contains a smooth open patch. Then 
\[
\sigma(\cA)\cap i\R=\emptyset,\qquad\text{equivalently}\qquad i\R\subset\rho(\cA).
\]
\end{lemma}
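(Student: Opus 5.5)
Since $\sigma(\cA)\cap i\R=\sap(\cA)\cap i\R$ (recalled in Section~\ref{sec:terminology}), it suffices to exclude approximate eigenvectors on $i\R$. The argument has two stages: first exclude genuine eigenvalues, then upgrade to approximate eigenvalues by a compactness--uniqueness argument.

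\emph{Eigenvalues.} Suppose $\cA U=i\omega U$ with $U=(w,v,u)\in\dom(\cA)$. Then $v=i\omega w$, and the dissipation identity gives
\[
0=\operatorname{Re}(\cA U,U)_{\HH}=-\norm{\nabla u}^2_{L^2(\Om_H)}.
\]
Hence $u$ is constant; since $u=0$ on $\Gamma_H$ (and using Poincaré on $\Om_H$), $u\equiv0$. Consequently $u=0$ in $\Om_H$ and $\partial_{\mathbf n}u=0$ on $\Gamma$. The transmission conditions then give $i\omega w=u=0$ and $\partial_{\mathbf n}w=0$ on $\Gamma$. So $w$ solves $-\Delta w=\omega^2 w$ in $\Om_W$ with vanishing Cauchy data on the smooth patch of $\Gamma$.

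\begin{itemize}
\item If $\omega\ne0$, Holmgren's theorem (the Helmholtz operator has analytic coefficients) gives $w=0$ near the patch, and connectedness of $\Om_W$ gives $w\equiv0$.
\item If $\omega=0$, we have $v=0$ and $\Delta w=0$ with $w=0$ on $\Gamma_W$ and $\partial_{\mathbf n}w=\partial_{\mathbf n}u=0$; energy gives $\nabla w=0$, and $\abs{\Gamma_W}>0$ gives $w=0$.
\end{itemize}
Thus $i\R\cap\spp(\cA)=\emptyset$. This is consistent with Lemma~\ref{prop:no-classical}.

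\emph{Approximate eigenvalues.} Suppose $U_n\in\dom(\cA)$, $\norm{U_n}_{\HH}=1$ and $(\cA-i\omega)U_n=G_n\to0$. The same dissipation identity yields $\norm{\nabla u_n}_{L^2(\Om_H)}\to0$, hence $u_n\to0$ in $H^1(\Om_H)$. Elliptic estimates for the heat resolvent then give $\partial_{\mathbf n}u_n\to0$ in $H^{-1/2}(\Gamma)$, and therefore the wave traces $i\omega w_n|_\Gamma\to0$ in $H^{1/2}(\Gamma)$ and $\partial_{\mathbf n}w_n\to0$ in $H^{-1/2}(\Gamma)$.

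Since $\omega$ is fixed, the components $(w_n,v_n)$ are bounded in $\dom(\cA)$ and thus $w_n$ is bounded in $H^1$ with $\Delta w_n$ bounded in $L^2$. Rellich compactness (the resolvent is compact on bounded domains) extracts a strong limit $U$ in $\HH$ with $\norm{U}=1$ (using $v_n=i\omega w_n+o(1)$ and strong $H^1$ convergence of $w_n$ from the equation). This limit is an eigenvector at $i\omega$, contradicting the first stage.

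\emph{Main obstacle.} The delicate step is justifying strong (not merely weak) convergence in $\HH$, so that the limit has norm one, together with the passage of the interface traces to the limit in Lipschitz geometry. Here the $\mathcal C^1$ interface and the local elliptic regularity near the smooth patch are what is needed. The $\omega=0$ case should use $0\in\rho(\cA)$, which can be argued directly via Lax--Milgram on the stationary coupled system.
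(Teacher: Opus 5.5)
Your two-stage strategy matches the route the paper indicates; the paper omits details and only cites ``Holmgren in a compactness--uniqueness argument''. The stages are: the dissipation identity plus Holmgren to exclude imaginary eigenvalues, then compactness--uniqueness to exclude approximate eigenvectors. Your eigenvalue stage is sound. One minor point: the standing hypotheses only give $\abs{\Gamma_W}>0$, so Poincar\'e through $\Gamma_H$ is not available. Instead, kill the constant $u$ with $\Delta u=i\omega u$ when $\omega\neq0$, and with $u|_\Gamma=v|_\Gamma=0$ when $\omega=0$; treat $u_n$ the same way.

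\textbf{The gap.} In the second stage you get a strongly convergent subsequence from ``the resolvent is compact on bounded domains''. That is false for this transmission system. In $\dom(\cA)$ the wave component only has $\Delta w\in L^2(\Om_W)$ and a Neumann trace in $H^{-1/2}(\Gamma)$. That trace is tied to the heat side through a Dirichlet-to-Neumann map, which gains nothing. A concrete counterexample:
\begin{itemize}
\item Take $F_k=(f_k,0,0)$ with $f_k$ bounded and weakly null in $H^1_{\Gamma_W}(\Om_W)$, supported near the patch, with traces oscillating so that $\norm{f_k|_\Gamma}_{H^{1/2}(\Gamma)}\gtrsim1$.
\item Then $\cA^{-1}F_k=(w_k,f_k,u_k)$. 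Here $u_k$ is the harmonic extension of $f_k|_\Gamma$ into $\Om_H$, and $w_k$ is harmonic with the matching Neumann data.
\item Testing the weak transmission condition with $(f_k,u_k)$ gives $\bigl|\int_{\Om_W}\nabla w_k\cdot\nabla\bar f_k\bigr|=\norm{\nabla u_k}^2_{L^2(\Om_H)}\gtrsim1$.
\item Hence $w_k$ stays bounded below in $H^1$ while $\cA^{-1}F_k\rightharpoonup0$.
\end{itemize}
So compactness can only come from your particular sequence. The remedy you suggest, local elliptic regularity near the patch, does not supply it, because strong convergence in $\HH$ is a global energy statement.

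\textbf{The repair} uses what you already derived.
\begin{itemize}
\item You have $u_n\to0$ in $H^1(\Om_H)$ and $\Delta u_n=i\omega u_n+g^3_n\to0$ in $L^2(\Om_H)$. The weak form of $\partial_{\mathbf n}w_n=\partial_{\mathbf n}u_n$ then shows that $\phi\mapsto\int_{\Om_W}(\Delta w_n\,\bar\phi+\nabla w_n\cdot\nabla\bar\phi)$ tends to zero uniformly on the unit ball of $H^1_{\Gamma_W}(\Om_W)$.
\item Set $h_n:=\Delta w_n+\omega^2w_n=i\omega g^1_n+g^2_n$, which tends to $0$ in $L^2(\Om_W)$. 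Testing with $\phi=w_n-w_m$ gives
\[
\norm{\nabla(w_n-w_m)}^2_{L^2(\Om_W)}=\omega^2\norm{w_n-w_m}^2_{L^2(\Om_W)}+o(1).
\]
Rellich in $L^2(\Om_W)$ --- not compactness of the resolvent --- then makes a subsequence Cauchy in $H^1$.
\item Consequently $v_n=i\omega w_n+g^1_n$ converges in $H^1$, and $U_n\to U$ in $\HH$ with $\norm{U}_{\HH}=1$.
\item Closedness of $\cA$ gives $U\in\dom(\cA)$ and $\cA U=i\omega U$, which contradicts your first stage. No separate trace-passage argument is needed.
\item For $\omega=0$, the same identity together with $\abs{\Gamma_W}>0$ gives $U_n\to0$ directly. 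So you do not need a separate Lax--Milgram argument for $0\in\rho(\cA)$.
\end{itemize}
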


\begin{remark}\label{rem:hw-geom}
The geometry enters only through unique continuation, as the above uses the transmission structure and generalized trace theory on arbitrary admissible domains. We tacitly use the fact that a Dirichlet eigenfunction cannot have vanishing Neumann trace on $\Gamma$. 
Compare this result to the geometry-free strong stability of \cite[Theorem 4]{ZhangZuazua2007}, proven by different methods.
\end{remark}

Lemma~\ref{lem:hw-nospec} has two consequences worth isolating. First, the heat-wave system is never spectrally resonant in the sense of Definition~\ref{def:spectrum}, so Theorem~\ref{thm:sampled-spectrum} is never available for it: whatever resonance occurs must be non-spectral. Second, for every $T>0$ all sampled frequencies satisfy $i\omega_n\in\rho(\cA)$, so the hypotheses of Theorem~\ref{thm:loss-secular} and of Propositions~\ref{prop:conditional-converse-loss}, \ref{prop:finite-stacking} and~\ref{prop:weighted-pwp} are met rather unconditionally in the general geometry.
The second issue is the state of the periodic theory at this juncture, recalled from Section~\ref{sec:heatwave}: under the geometric conditions of either \cite{ZhangZuazua2007} or \cite{MMSW2024}, periodic well-posedness holds in $H^m(0,T; \cdots)$ spaces of Sobolev type, with a clear finite loss of derivatives. Consequently Proposition~\ref{prop:conditional-converse-loss} protects every smooth forcing from generating resonance. Resonance is nevertheless present already in these geometries, at the cost of lower regularity, and this is visible in the constructed examples \cite[Section 2.2]{MMSW2024}.

\begin{proposition}[Resonance in the rectangle]\label{prop:mmsw}
Let $\Om_W=(0,\pi)\times(0,1)$, $\Om_H=(0,\pi)\times(-1,0)$, $\Gamma=(0,\pi)\times\{0\}$, and $T=2\pi$. Then there is $c>0$ with
\[
\norm{(inI-\cA)^{-1}}_{\cL(\HH)}\ \ge\ c\,n,\qquad n\ge1,
\]
so $\cA$ exhibits $1$-derivative loss for the period $2\pi$. Consequently there exists $F\in L^2_{\#}(0,2\pi;\HH)$ admitting no $2\pi$-periodic mild solution, and $(2\pi,F)$ is statically, hence secularly, resonant.
\end{proposition}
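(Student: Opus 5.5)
Following the pseudo-spectral wave example, we will construct an explicit approximate eigenvector (quasimode) $Z_n$ at $in$ whose defect is only $O(1)$, while $\norm{Z_n}_{\HH}\sim n$. Then we invoke Proposition~\ref{prop:finite-stacking} with $m=1$. Recall that by Lemma~\ref{lem:hw-nospec} all $in\in\rho(\cA)$, so the hypotheses there are met once the lower bound is in place.

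\emph{Quasimode.} In the rectangle, take the wave component built from separated variables with horizontal frequency $n$:
\[
w_n(x,y)=\sin(nx)\,\phi(y),
\]
with $\phi$ chosen independent of $n$, say $\phi(y)=\sin(\pi y/2)$ or any fixed profile vanishing at $y=1$. Then $w_n=0$ on $\Gamma_W$, which consists of the sides $x=0,\pi$ and the top $y=1$. We set $Z_n=(w_n,\,in\,w_n,\,u_n)$. The heat component $u_n$ is chosen to satisfy the transmission conditions exactly: $u_n=in\,w_n$ on $\Gamma$ and $\partial_{\mathbf n}u_n=\partial_{\mathbf n}w_n$. A convenient choice is the following. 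Take $\phi(0)=0$ so that the Dirichlet trace vanishes (e.g.\ $\phi(y)=\sin(\pi y)$, which gives $w_n=0$ on $\Gamma$). Then only the normal-derivative matching remains, with $\partial_y w_n=\pi\sin(nx)$ on $\Gamma$. Put
\[
u_n=\pi\sin(nx)\,\psi(y),\qquad \psi(0)=0,\ \psi'(0)=1,\ \psi(-1)=0,
\]
with $\psi$ fixed and smooth. One must check that $Z_n\in\dom(\cA)$; we will take the domain characterization (interface traces and $\Delta$ in $L^2$) from the standard references.

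\emph{Defect computation.} We compute $(in I-\cA)Z_n$ componentwise.
\begin{enumerate}
\item The first component is $in\,w_n-in\,w_n=0$.
\item The second component is $in(in\,w_n)-\Delta w_n=(-n^2+n^2+\pi^2)w_n=\pi^2 w_n$, which is $O(1)$ in $L^2(\Om_W)$.
\item The third component is $in\,u_n-\Delta u_n=(in+n^2)u_n-\pi\sin(nx)\psi''$, which is $O(n^2)$.
\end{enumerate}
The third component is bad. So the heat part must instead be taken as the decaying boundary-layer solution of $(in-\Delta)u_n=0$: set $u_n=c_n\sin(nx)e^{\kappa_n y}$ with $\kappa_n^2=n^2+in$, $\operatorname{Re}\kappa_n\approx n$, and choose $c_n$ so that the Neumann data match, giving $c_n=\pi/\kappa_n=O(1/n)$. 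To restore $u_n=0$ on $y=-1$, cut off near $y=-1$; since $e^{-\operatorname{Re}\kappa_n}$ is exponentially small there, the cutoff error is negligible. The Dirichlet matching holds because $u_n(\cdot,0)=c_n\sin(nx)\ne0$ while $in\,w_n=0$ on $\Gamma$. To fix this, add to $w_n$ a correction $\chi(y)\,c_n\sin(nx)/(in)$ localized near $y=0$. This correction is $O(n^{-2})$ in $L^2$ and $O(n^{-1})$ in $H^1$, and $\Delta$ applied to it is $O(1)$, so the defect remains $O(1)$. The correction slightly perturbs the Neumann matching, and we will absorb that perturbation by adjusting $c_n$ at order $O(n^{-1})$.

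\emph{Conclusion.} We have $\norm{Z_n}_{\HH}\ge\norm{\nabla w_n}_{L^2}\gtrsim n$ and $\norm{(inI-\cA)Z_n}_{\HH}\le C$. Hence
\[
\norm{(inI-\cA)^{-1}}\ge \frac{\norm{Z_n}_{\HH}}{\norm{(inI-\cA)Z_n}_{\HH}}\ge c\,n.
\]
By Remark~\ref{rem:loss-resolvent} this is $1$-derivative loss, and Proposition~\ref{prop:finite-stacking} with $m=1$ produces $F\in H^0_{\#}=L^2_{\#}(0,2\pi;\HH)$ with no periodic mild solution. Theorem~\ref{thm:equiv} then yields the static and secular resonance. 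The main obstacle is the interface bookkeeping: exact membership in $\dom(\cA)$ with $O(1)$ defect. I would first try the boundary-layer heat profile described above, since any smooth $n$-independent heat profile costs $n^2$.
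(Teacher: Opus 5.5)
Your final construction is correct, but it takes a more laborious route than the paper's proof. The paper sidesteps the interface bookkeeping by choosing the transverse profile $\phi\in C^2_0(0,1)$ with $\phi(0)=\phi'(0)=0$. Then $\sin(nx)\phi(y)$ has vanishing Dirichlet and Neumann traces on $\Gamma$, so $(\sin(nx)\phi,\ in\sin(nx)\phi,\ 0)$ lies in $\dom(\cA)$ with identically zero heat component. Applying $inI-\cA$ to it gives $(0,-\sin(nx)\phi'',0)$, hence $c=\norm{\phi}_{L^2}/\norm{\phi''}_{L^2}$ at once. The paper phrases this through the classical $2\pi$-periodic solution $\sin(nt)\sin(nx)\phi(y)$ and its $n$-th Fourier coefficient, which is the same computation.

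Your choice $\phi(y)=\sin(\pi y)$ has $\phi'(0)=\pi\neq0$. That nonzero flux is what forces the heat boundary layer $c_n\sin(nx)e^{\kappa_n y}$, the cutoff near $y=-1$, and the Dirichlet corrector in the wave domain. These steps all check out:
\begin{itemize}
\item The corrector contributes only $-\chi''(y)\,c_n\sin(nx)/(in)=O(n^{-2})$ to the second component of the defect.
\item The Neumann re-matching is a linear equation for $c_n$, solvable for large $n$, and it is avoided entirely by taking $\chi'(0)=0$.
\item The cutoff error is exponentially small.
\end{itemize}
There is one wording slip: ``the Dirichlet matching holds'' should read ``fails.''

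Your route buys a small robustness statement: the linear lower bound does not require the quasimode to be exactly invisible to the interface. A wave mode with nonzero interface flux at tangential frequency $n$ is absorbed only through an $O(1/n)$-thin heat layer, so it remains an $O(1)$-defect quasimode relative to energy of order $n$. The paper's route buys brevity, a trivial domain check, and the clean interpretation it stresses afterwards, namely that the witnessing waves never engage the dissipation at all.
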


\begin{proof}
Fix $\phi\in C^2_0(0,1)$, $\phi\not\equiv0$, and set $w_n(t,x,y)=\sin(nt)\sin(nx)\phi(y)$. A direct computation gives $\partial_{tt}w_n-\Delta w_n=g_n:=-\sin(nt)\sin(nx)\phi''(y)$, and $w_n$ vanishes on $\partial\Om_W$. Since $\phi(0)=\phi'(0)=0$, both interface conditions in \eqref{eq:hw} hold with vanishing heat component: $\partial_tw_n|_\Gamma=0$ and $\partial_{\bm n}w_n|_\Gamma=0$. Hence $U_n:=(w_n,\partial_tw_n,0)$ takes values in $\dom(\cA)$ and is a classical $2\pi$-periodic solution of \eqref{eq:main} with forcing $F_n:=(0,g_n,0)$.

Writing $\sin(nt)=(e^{int}-e^{-int})/2i$ and comparing the $e^{int}$ coefficients,
\[
\widehat{(U_n)}_n=\tfrac1{2i}\bigl(\sin(nx)\phi,\ in\sin(nx)\phi,\ 0\bigr),
\qquad
\widehat{(F_n)}_n=\tfrac1{2i}\bigl(0,\ -\sin(nx)\phi'',\ 0\bigr),
\]
so that
\[
\norm{\widehat{(U_n)}_n}_{\HH}^2=\tfrac\pi8\bigl(2n^2\norm{\phi}_{L^2}^2+\norm{\phi'}_{L^2}^2\bigr),
\qquad
\norm{\widehat{(F_n)}_n}_{\HH}^2=\tfrac\pi8\norm{\phi''}_{L^2}^2 .
\]
By Lemma~\ref{lem:hw-nospec} we have $in\in\rho(\cA)$, so $\widehat{(U_n)}_n=(inI-\cA)^{-1}\widehat{(F_n)}_n$ and the resolvent bound follows with $c=\norm{\phi}_{L^2}/\norm{\phi''}_{L^2}$. As $n\ge\frac12(1+n)$ for $n\ge1$, Proposition~\ref{prop:finite-stacking} applies with $m=1$ and produces $F\in H^0_{\#}(0,2\pi;\HH)=L^2_{\#}(0,2\pi;\HH)$ with the stated properties.
\end{proof}

The solutions $w_n$ are wave motions completely invisible to the interface: the heat component vanishes identically, so the dissipation never engages, and the response exceeds the forcing by one derivative. This is Example~1 of \cite{MMSW2024}, where it is presented as a regularity gap in the data-to-solution map; through Theorem~\ref{thm:equiv}, it is then a statement about resonance. The forcing produced here is not smooth, and by Proposition~\ref{prop:conditional-converse-loss} it cannot be at any period for which the sampled resolvent obeys a polynomial bound. A smooth resonant forcing requires infinite derivative loss, and hence a geometry outside the reach of the construction above.

\subsection{Partially damped wave systems}\label{sec:pdamped}

The interplay between geometry, resolvent growth, and periodic resonance is visible in the locally damped wave equation
\[
u_{tt}-\Delta u+a(x)u_t=f\ \text{ in }\Om,\qquad u|_{\partial\Om}=0,
\]
with $0\le a$ supported in a damping region. This system has been extensively studied, and is written in first-order form as $\partial_t(u,u_t)=\cA_a(u,u_t)+(0,f)$, for the damped wave generator $\mathcal A_a$. The geometric control condition famously requires ``every generalized bicharacteristic to meet $\{a>0\}$ within a uniform time" \cite{BardosLebeauRauch1992}. Under the GCC, one has exponential energy decay, and hence a uniform resolvent bound on $i\R$ by the Gearhart--Pr\"uss theorem \cite{Gearhart1978,Pruss1984,Huang1985}; thence, one has unconditional PWP in the standard energy class \cite{Galdi,Bostan2002}. However, when
the GCC fails,  consequences depend on the so called trapped set. 

If $\phi_j$ are quasimodes which concentrate away from the damping,
\[
\norm{(-\Delta-k_j^2)\phi_j}_{L^2}\le\eps_j,
\qquad
\norm{a^{1/2}\phi_j}_{L^2}\le\eps_j,
\qquad \norm{\phi_j}_{L^2}=1,
\]
then the energy vectors $(\phi_j,ik_j\phi_j)$ are approximate eigenvectors of $\cA_a$ at $ik_j$, and $$\norm{(ik_jI-\cA_a)^{-1}}\gtrsim\eps_j^{-1}$$ up to polynomial factors, whenever $ik_j\in\rho(\cA_a)$. The strength of the resulting lower bound tracks the nature of the trapping: stable trapping can produce super-polynomial or exponential growth \cite{BurqHitrik2007}, while hyperbolic-type trapping typically produces weaker rates \cite{ChristiansonSchenckVasyWunsch2014,BurqZworski2004}, with intermediate behavior possible in between \cite{AnantharamanLeautaud2014,LeautaudLerner2017}.

The comparison is apt because the heat-wave system is, in effect, an interface-damped wave equation in which the dissipation is supplied by the heat domain rather than by a cutoff function $a(\mathbf x)$. Eliminating the heat component at a fixed frequency $\omega$ leads formally to a frequency-dependent {\em impedance condition} on the wave domain/boundary,
\[
\partial_{\bm n}W+i\omega\,N_H(\omega)\,W|_\Gamma=0\quad\text{on }\Gamma,
\]
where $N_H(\omega)$ is a Dirichlet-to-Neumann mapping of $i\omega-\Delta$ on $\Om_H$. Here, the wave is damped only through its trace on $\Gamma$, just as the locally damped wave is damped only where $a>0$.

Both models then share the sampling layer described in Section~\ref{sec:taxonomy}, namely, however large the resolvent is near a trapped frequency $k_j$, a $T$-periodic forcing tests only the lattice $\{i\omega_n\}$. Thus, the fixed-period response is governed by the interaction of the trapped frequencies with $\frac{2\pi}{T}\Z$. Failure of the GCC robustly destroys uniform resolvent control, whereas fixed-period resonance demands an arithmetic alignment of the two. The recent \cite{LaurentRivas2026} develops the positive side of this picture, deducing periodic well-posedness with a finite loss of derivatives from prescribed rates in the non-uniform case, and from associated resolvent bounds on $i\R$. They apply the theory to the damped wave equation as well as to heat-wave systems. They also give a counterexample to PWP (i.e., a resonant scenario), whose mechanism is instructive; they consider the Schr\"odinger equation on $\mathbb S^2$, with damping vanishing near an equator, and produce, for every $k$, a dense $G_\delta$ of forcings in $L^1_{\#}(0,2\pi;H^k(\mathbb S^2))$ whose solutions are unbounded. The choice of equation is forced by the arithmetic. On the round sphere the eigenvalues of $-\Delta_g$ are the integers $j(j+1)$, so every trapped frequency is sampled exactly by the lattice of the single period $T=2\pi$---the needed alignment is automatic. For the wave equation, the corresponding frequencies are $\sqrt{j(j+1)}$ and no such alignment is available. The next section obtains alignment for a genuinely hyperbolic system.

\section{Resonant ellipse: infinite derivative loss and smooth resonance}\label{sec:ellipse}

We now prove Theorem~\ref{thm:ellipse}. The mechanism is geometric: an ellipse that is not a circle carries a stable bouncing-ball orbit along its minor axis. This is made precise by detecting a branch of Dirichlet eigenfunctions $U_j$ with eigenvalues $\kappa_j^2$, which concentrates exponentially around the minor axis. Cutting these modes off away from the interface produces quasimodes on which the dissipation is exponentially weak, and the resolvent grows exponentially along their frequencies $\kappa_j$. Proving part~ (i) of Theorem~\ref{thm:ellipse} follows by rounding the period so that a single $\kappa_j$ is sampled exactly. Proof of part~(ii) requires the sharper statement that one period can be sampled at infinitely many $\kappa_j$ at once, and this is where a Baire argument enters.

For the argument only two things are required: near its minor axis the wave domain coincides with an ellipse, and the interface stays out of that slab. We formalize this and state the general form of Theorem~\ref{thm:ellipse}. Let $a>b>0$ and let
\[
 E:=\Bigl\{(x,y)\in\R^2:\ \frac{x^2}{a^2}+\frac{y^2}{b^2}<1\Bigr\},
\]
whose minor axis is the segment $\{0\}\times(-b,b)$; for $\eps>0$ let $S_\eps:=\{(x,y)\in\R^2:\ \abs x<\eps\}$.

\begin{definition}[Ellipse configuration]\label{def:ellipse}
A pair of domains $(\Om_W,\Om_H)$ with interface $\Gamma$, admissible in the sense of Section~\ref{sec:heatwave}, is an \emph{ellipse configuration} if $\Gamma$ contains a smooth open patch and there is $\eps>0$ such that, after a rigid motion of the plane,
\[
\Om_W\cap S_\eps=E\cap S_\eps
\qquad\text{and}\qquad
\overline\Gamma\cap\overline{S_\eps}=\emptyset .
\]
\end{definition}
In particular the minor axis $\{0\}\times(-b,b)$ is a bouncing-ball billiard orbit of $\Om_W$ which never meets $\overline\Gamma$, so no ray issuing along it reaches the heat domain and the geometric conditions of Section~\ref{sec:heatwave} fail. Two instances are of particular interest: (a) the resonant ellipse of Section~\ref{sec:taxonomy}, $\Om_W=E$ and $\overline\Gamma\subset\{x\ge\delta_\Gamma\}$, for which every $\eps<\delta_\Gamma$ works; (b) the \emph{truncated ellipse} $\Om_W^{d}=E\cap\{x>-d\}$, $0<d<a$, with the chord $\Gamma^{d}=\{x=-d\}\cap E$ as interface and a heat domain attached to the left of the chord, in the notation of Section~\ref{sec:taxonomy}, for which every $\eps<d$ works. Instance (b) is the family behind the sharpness discussion after Theorem~\ref{thm:ellipse} and in Section~\ref{sec:ellipse-sharp}.

\begin{theorem}[Resonant ellipse configurations]\label{thm:ellipse-general}
Let $(\Om_W,\Om_H)$ be an ellipse configuration. Then both conclusions \textup{(i)} and \textup{(ii)} of Theorem~\ref{thm:ellipse} hold for the heat-wave system \eqref{eq:hw} on $(\Om_W,\Om_H)$.
\end{theorem}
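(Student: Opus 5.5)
The proof has three stages: build interface-invisible quasimodes, obtain (i) by adjusting the period to sample one quasimode frequency exactly, and obtain (ii) by a Baire argument that makes a single period sample infinitely many of them closely enough.

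\textbf{Step 1: quasimodes.} I would use the bouncing-ball branch of Dirichlet eigenfunctions of the ellipse $E$ concentrating on the minor axis. These can be built in elliptic coordinates via Mathieu functions, or as Gaussian-beam/Birkhoff quasimodes along the stable orbit $\{0\}\times(-b,b)$. Their frequencies are $\kappa_j\to\infty$ with asymptotics $\kappa_j = \frac{\pi j}{2b} + c_0 + O(j^{-1})$, where $c_0$ is a Maslov/transverse-oscillator correction. The key property is exponential localization: $|U_j(x,y)|\lesssim e^{-c\kappa_j x^2}$ in an Agmon sense. I would prove this with an Agmon estimate in the transverse variable, using that the stable orbit acts as a harmonic well for the transverse motion.

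I then cut off with $\chi(x)$ supported in $S_\eps$ and equal to $1$ near $x=0$, and set $\phi_j=\chi U_j$. Since $\Om_W\cap S_\eps=E\cap S_\eps$, $\phi_j$ is a Dirichlet function on $\Om_W$ that vanishes near $\overline\Gamma$. It satisfies $\|(-\Delta-\kappa_j^2)\phi_j\|\lesssim e^{-c\kappa_j\eps^2}$, with $\phi_j$ of order one.

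\textbf{Step 2: part (i).} I would take the state $Z_j=(\phi_j,i\kappa_j\phi_j,0)\in\dom(\cA)$. Both interface conditions hold trivially, since $\phi_j$ and its normal derivative vanish on $\Gamma$ and the heat component is zero. Then $(i\kappa_j-\cA)Z_j=(0,(-\Delta-\kappa_j^2)\phi_j,0)$, so $\|R_{i\kappa_j}\|\gtrsim \kappa_j e^{c\kappa_j\eps^2}$. Given $T_*$, I choose $n_j=\mathrm{round}(\kappa_jT_*/2\pi)$ and $T_j=2\pi n_j/\kappa_j\to T_*$. Then $i\kappa_j$ is exactly a lattice point for period $T_j$.

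The witnessing pair is $u_j(t)=e^{i\kappa_j t}Z_j/\|Z_j\|$ with $f_j(t)=e^{i\kappa_j t}(i\kappa_j-\cA)Z_j/\|Z_j\|$. This $f_j$ is smooth, supported in $\Om_W\cap S_\eps$, and has heat component zero. Its $H^{m-1}$ norm is $\lesssim \kappa_j^{m-1}e^{-c\kappa_j\eps^2}\to0$ for every $m$, while $\|u_j\|_{L^2}\sim\sqrt{T_j}$. This gives (i) via Definition~\ref{def:period-accum}.

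\textbf{Step 3: part (ii), the main obstacle.} The resolvent is large in a neighborhood of $i\kappa_j$, not just at the point. Indeed $\|(i\omega-\cA)Z_j\|\le \|(i\kappa_j-\cA)Z_j\|+|\omega-\kappa_j|\|Z_j\|$, so $\|R_{i\omega}\|\gtrsim e^{c\kappa_j\eps^2/2}$ whenever $|\omega-\kappa_j|\le e^{-c\kappa_j\eps^2/2}$.

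For $N\in\N$ I define $G_N$ as the set of $T>0$ for which some $j\ge N$ and some $n\in\Z$ satisfy $|2\pi n/T-\kappa_j|<e^{-c\kappa_j\eps^2/2}$. Each $G_N$ is open. Each $G_N$ is dense by the rounding argument of Step 2, applied with $j\ge N$ and $T_j\to T_*$. Baire then makes $\mathcal R=\bigcap_N G_N$ a dense $G_\delta$.

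For $T\in\mathcal R$, infinitely many lattice points $\omega_n$ carry $\|R_{i\omega_n}\|\gtrsim e^{c'|\omega_n|}$. This is exponential sampled growth, hence infinite derivative loss by Remark~\ref{rem:loss-resolvent}; note $i\omega_n\in\rho(\cA)$ by Lemma~\ref{lem:hw-nospec}. Theorem~\ref{thm:loss-secular} and Theorem~\ref{thm:equiv} then give the smooth statically and secularly resonant forcing.

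For Lebesgue-nullness I would use Borel--Cantelli on a compact $T$-interval. For each $j$ the admissible $n$ number $O(\kappa_j)$, and each excluded $T$-window has length $O(\kappa_j^{-1}e^{-c\kappa_j\eps^2/2})$. The total measure is $\sum_j O(e^{-c\kappa_j\eps^2/2})<\infty$, since $\kappa_j$ grows linearly.

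The delicate analytic point is the quantitative exponential concentration of the ellipse modes with a rate uniform in $j$. I would try the Mathieu-function (separable) route first, because it gives explicit WKB control. The Agmon estimate is the fallback if that control proves hard to make uniform in $j$.
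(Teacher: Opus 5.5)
Your proposal is correct and follows essentially the paper's route. The matching steps are: exact Dirichlet modes of $E$ concentrating on the minor axis, cut off inside the slab to give interface-invisible quasimodes with exponentially small residual; rounding the period so that $\frac{2\pi}{T_j}\Z$ contains $\kappa_j$ for (i); and, for (ii), the spreading bound, a Baire argument on $\limsup_j$ of open band sets, Borel--Cantelli for nullity, and the stacking theorem. Your band width $e^{-c\kappa_j\eps^2/2}$ in place of the paper's $\eps_j$ is immaterial.

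The analytic core you leave open is handled in the paper by combining your two suggested routes rather than choosing between them. It separates variables in elliptic coordinates, proves an Agmon estimate for the ground state of the angular operator $-G''+\kappa^2c^2\cos^2\eta\,G$ with $c=\sqrt{a^2-b^2}$ (this is exactly your transverse harmonic well), and uses a Pr\"ufer-phase argument to quantize $\kappa$. By contrast, a $C^\infty$ Gaussian-beam construction alone would give only $O(\kappa_j^{-\infty})$ residuals: enough for infinite loss, but not for the claimed exponential growth.
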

Theorem~\ref{thm:ellipse} is instance (a). Throughout the rest of this section $(\Om_W,\Om_H)$ is an arbitrary ellipse configuration, with $E$ and $\eps$ fixed as in Definition~\ref{def:ellipse}; every construction below is supported in $S_\eps$, where $\Om_W$ and $E$ coincide, and vanishes near $\overline\Gamma$.

Two admissibility remarks. First, $\Om_W$ is connected and Lipschitz, and $\Gamma$ contains a smooth open patch, so Lemma~\ref{lem:hw-nospec} applies and $i\R\subset\rho(\cA)$. Every sampled frequency therefore lies in the resolvent set, for every period, and the hypotheses of Theorem~\ref{thm:loss-secular} are met as soon as infinite derivative loss is established. Second, $\Gamma_W$ has positive measure, so the standing assumptions of Section~\ref{sec:heatwave} hold and $S(t)$ is strongly stable \cite[Theorem 4]{ZhangZuazua2007}. It is not uniformly stable: the resolvent is unbounded on $i\R$ by Corollary~\ref{lem:ell-resolvent} below, which excludes uniform stability by the Gearhart--Pr\"uss theorem \cite{Gearhart1978,Pruss1984,Huang1985} (for $\mathcal C^4$ or polyhedral configurations this also follows from \cite[Theorems 5 and 6]{ZhangZuazua2007}).

\subsection{The quasimode branch}\label{sec:ellipse-quasimode}
The basis is an accurate construction of the concentrating eigenmodes; the precise construction is given in the Appendix, compare also \cite{Ralston77,CardosoPopov}. The essential properties are collected in the following proposition. Since we also want to show that no gain of \emph{space} regularity of the forcing prevents the high-frequency catastrophe (Corollary~\ref{cor:obstruction}), we record the scale of Sobolev spaces adapted to the Dirichlet Laplacian on $E$,
\[
\mathcal{H}^\ell(E):=\{\phi\in H^\ell(E):\ \Delta^m\phi\in H^1_0(E)\text{ for all integers }0\le m<\tfrac{\ell}{2}\},\quad \ell\in\N,
\]
with $\mathcal{H}^0(E):=L^2(E)$ and $\mathcal{H}^{-\ell}(E):=(\mathcal{H}^\ell(E))^*$.
These are Hilbert spaces with inner product
\[
\langle\phi,\psi\rangle_{\mathcal{H}^\ell(E)}=\begin{cases}
\int_E \Delta^{\ell/2}\phi\; \Delta^{\ell/2}\psi\, dx&\ell\text{ even},
\\[2pt]
\int_E \nabla \Delta^{(\ell-1)/2}\phi\cdot \nabla \Delta^{(\ell-1)/2}\psi\, dx&\ell\text{ odd},
\end{cases}
\]
whose norm is equivalent to the $H^\ell(E)$-norm on $\mathcal{H}^\ell(E)$ by elliptic regularity on the smooth domain $E$; negative orders are defined by duality, $\mathcal{H}^{-\ell}(E):=(\mathcal{H}^{\ell}(E))^*$.

The following proposition collects Lemma~\ref{lem:radial-zeros}, Lemma~\ref{lem:smooth-ext} and Lemma~\ref{lem:lowerbound} of the appendix (with the constants renamed).
\begin{proposition}[Concentrated Dirichlet eigenmodes]\label{prop:ell-branch}
There are $N\in\N$ and constants $c_0,C_1,C_2>0$, and for every $\eps'\in(0,a\sin(\pi/8))$ and every  $m\in \mathbb{N}_0$ constants $C_{m,\eps'}>0$ and $c_m>0$ (the latter independent of $\eps'$), all depending only on $a>b>0$, such that for every $j\ge N$ there is an eigenpair $(U_j,\kappa_j^2)$ of the Dirichlet Laplacian on $E$ with the following properties:
\begin{enumerate}[label=\textup{(\roman*)}]
 \item $U_j\in C^\infty(\overline E)\cap H^1_0(E)$ is real-valued, even in $x$ and in $y$, and $-\Delta U_j=\kappa_j^2U_j$ in $E$;
 \item $c_0\le\norm{U_j}_{L^2(E)}\le C_1$; consequently $\norm{U_j}_{\mathcal{H}^\ell(E)}=\kappa_j^\ell\norm{U_j}_{L^2(E)}\ge c_0\kappa_j^\ell$ for every $\ell\in\Z$;
 \item $\norm{U_j}_{H^m(E\setminus S_{\eps'})}\le C_{m,\eps'}\,\kappa_j^{2m}\,e^{-c_m\eps'^2\kappa_j}$;
 \item $j-C_2\le \kappa_j\,b/\pi \le j+C_2$; in particular $\kappa_j\to\infty$, and the $\kappa_j$ are strictly increasing.
\end{enumerate}
\end{proposition}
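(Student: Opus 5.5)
The plan is to build the branch from separation of variables in elliptic coordinates, which is exact for the ellipse. Write $x=c\sinh\mu\,\sin\theta$, $y=c\cosh\mu\,\cos\theta$ with $c^2=a^2-b^2$, oriented so that the minor axis corresponds to $\theta\in\{0,\pi\}$. The Dirichlet eigenproblem $-\Delta U=\kappa^2U$ on $E$ then separates into the angular Mathieu equation
\[
\Theta''+(\alpha-2q\cos2\theta)\Theta=0
\]
and the radial modified Mathieu equation
\[
R''-(\alpha-2q\cosh2\mu)R=0,
\qquad q=c^2\kappa^2/4,
\]
with the boundary condition $R(\mu_0)=0$, where $\cosh\mu_0=a/c$.

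\textbf{Step 1: the angular factor.} Take $\Theta$ to be the ground-state (even, $\pi$-periodic) Mathieu function $ce_0(\theta;q)$, or the lowest-order one compatible with the symmetry. Its eigenvalue satisfies $\alpha_0(q)=-2q+2\sqrt q+O(1)$, so the potential $2q\cos2\theta$ is minimized exactly at $\theta\in\{0,\pi\}$. A harmonic-oscillator/Agmon comparison then shows that $\Theta$ is Gaussian-concentrated, $|\Theta|\lesssim e^{-c\sqrt q\,\theta^2}$ near $\theta=0$. Since $\sqrt q\sim\kappa$ and the distance to the minor axis is $x\approx c\sinh\mu\,\theta$, this gives decay of order $e^{-c\kappa x^2}$. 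This is the source of the factor $e^{-c_m\eps'^2\kappa_j}$ in (iii).

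\textbf{Step 2: the radial factor and quantization.} With $\alpha=\alpha_0(q)$ fixed, the radial equation on $(0,\mu_0)$ is oscillatory with WKB phase $\int_0^{\mu_0}\sqrt{2q\cosh2\mu-\alpha}\,d\mu\approx\kappa\int_0^{\mu_0}c\cosh\mu\,d\mu=\kappa b$, up to $O(1)$ corrections. I would impose the regularity (evenness) condition at $\mu=0$, which is the smoothness condition across the focal segment, together with $R(\mu_0)=0$. By a Sturm comparison / zero-counting argument (Lemma~\ref{lem:radial-zeros}), for each large $j$ there is exactly one $\kappa_j$ whose radial solution has its $j$-th zero at $\mu_0$. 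This yields (iv): $\kappa_jb/\pi=j+O(1)$, and strict monotonicity of $\kappa_j$ follows from the monotonicity of the zeros in $\kappa$. The product $U_j=R(\mu)\Theta(\theta)$ must then be shown to extend smoothly across the focal segment $\{\mu=0\}$ and to be even in $x$ and $y$ (Lemma~\ref{lem:smooth-ext}); this uses the matching of the parities of $ce_0$ and of the radial solution. Item (i) follows.

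\textbf{Step 3: normalization and derivative bounds.} For (ii), normalize by the sup of $R$ and estimate $\int|R\Theta|^2J\,d\mu\,d\theta$ from both sides, where $J=c^2(\sinh^2\mu+\sin^2\theta)$ is the Jacobian. The Gaussian width in $\theta$ is $\kappa^{-1/2}$ and the radial amplitude is of WKB size, so I would rescale the normalization so that these factors produce two-sided bounds independent of $j$ (Lemma~\ref{lem:lowerbound}). The identity $\norm{U_j}_{\mathcal H^\ell}=\kappa_j^\ell\norm{U_j}_{L^2}$ holds because $U_j$ is a Dirichlet eigenfunction. For (iii), combine the pointwise decay from Step 1 with interior and boundary elliptic estimates: $\norm{U}_{H^m}\lesssim\sum_{k\le m}\kappa^{2k}\norm{U}_{L^2}$ on slightly enlarged sets, together with a cutoff. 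This costs polynomial factors $\kappa^{2m}$ and halves the constant in the exponent.

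\textbf{Main obstacle.} I expect the uniform-in-$q$ Gaussian decay of $ce_0$, combined with the radial-angular coupling through the shared $\alpha$, to be the hardest step. It requires explicit constants that are valid on the whole region $|\theta|\ge\eps'$, away from where the harmonic approximation holds. I would first try an Agmon-type energy estimate with weight $e^{\phi(\theta)}$, $\phi=\tfrac12\sqrt q\,\theta^2$ truncated, using the bound $2q(\cos2\theta-1)+2\sqrt q\ge cq\theta^2$ for $|\theta|\le\pi/4$. This is consistent with the restriction $\eps'<a\sin(\pi/8)$.
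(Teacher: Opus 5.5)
Your route is the paper's: separation in elliptic coordinates, Agmon localization of the angular ground state, phase counting for the radial problem with zero derivative at the focal segment, smooth extension across that segment, and a cutoff/elliptic bootstrap for (iii). But Step 1 is set up with the wrong orientation, and as written its central claim is false.

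\textbf{The orientation error in Step 1.} With $x=c\sinh\mu\sin\theta$, $y=c\cosh\mu\cos\theta$ and $\cosh\mu_0=a/c$, the curve $\mu=\mu_0$ is $x^2/b^2+y^2/a^2=1$. That is $E$ rotated by $90^\circ$: the focal segment lies on $\{x=0\}$, and $\theta\in\{0,\pi\}$ is the part of the \emph{major} axis beyond the foci. Your Jacobian $J=\tfrac{c^2}{2}(\cosh2\mu-\cos2\theta)$ gives the angular operator $-\Theta''+2q\cos2\theta\,\Theta$, whose potential is \emph{maximal} at $\theta\in\{0,\pi\}$. Consequently $ce_0(\cdot;q)$ is exponentially small there and carries its mass near $\theta=\pm\pi/2$. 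Indeed, the value $\alpha_0=-2q+2\sqrt q+O(1)$ you quote is exactly the harmonic-oscillator level at the minimum $-2q$, which is attained at $\pm\pi/2$. So the asserted Gaussian concentration at $\theta=0$ fails.

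The Agmon input is sign-reversed too. The expression $2q(\cos2\theta-1)+2\sqrt q$ is negative once $q^{-1/4}\ll\abs\theta\le\pi/4$, so it cannot dominate $cq\theta^2$. An Agmon estimate needs $V-\alpha$, not $\alpha-V$, to be large off the well, and no such bound can hold down to the well itself, which is classically allowed.

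\textbf{The repair.} This is a relabeling error rather than a missing idea. Take $x=c\cosh\mu\sin\theta$, $y=c\sinh\mu\cos\theta$; equivalently, use the paper's $x=c\cosh\xi\cos\eta$, $y=c\sinh\xi\sin\eta$, with the well at $\eta=\pi/2$. Then:
\begin{itemize}
\item the domain is $E$ and $h^2=\tfrac{c^2}{2}(\cosh2\mu+\cos2\theta)$;
\item your radial equation is unchanged;
\item the angular equation becomes $\Theta''+(\alpha+2q\cos2\theta)\Theta=0$, with ground state $ce_0(\theta+\pi/2;q)$ concentrated at $\theta\in\{0,\pi\}$, which is now the minor axis;
\item the correct Agmon lower bound is $V-\alpha=4q\sin^2\theta-2\sqrt q+O(1)\ge cq\theta^2$ for $Cq^{-1/4}\le\abs\theta\le\pi/2$, with the core $\abs\theta\lesssim q^{-1/4}$ controlled by $\norm{\Theta}_{L^2}=1$.
\end{itemize}
Moreover $\abs x\le a\abs{\sin\theta}$, so $E\setminus S_{\eps'}$ lies where $\abs{\sin\theta}>\eps'/a$. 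This is where the restriction on $\eps'$ enters; the paper works with $\delta=\arcsin(\eps'/a)$. Only the upper bound $\alpha_0\le-2q+C\sqrt q$ is actually needed. The paper obtains it from a cut-off Gaussian competitor ($\Lambda_\kappa\le M\kappa$) and uses a plateau weight (zero near the well, $\alpha\kappa$ away from it) instead of your Gaussian weight; either works once the signs are right.

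\textbf{Smaller points.}
\begin{itemize}
\item Your claim of \emph{exactly one} $\kappa_j$ per zero count uses that the radial coefficient $2q\cosh2\mu-\alpha_0(q)$ increases with $q$. This is not automatic, because $\alpha_0$ moves with $q$. It does hold, since $\abs{\alpha_0'(q)}<2$ by Hellmann--Feynman. The paper sidesteps the issue by taking the smallest root of the Pr\"ufer condition $\theta(\xi_0,\kappa)=n\pi$.
\item Property (ii) alone is trivial after rescaling. Its content is that (iii) holds relative to that normalization. This requires bounding the sup of the radial factor by its weighted $L^2$ mass, and bounding the Jacobian from below near the minor axis. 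The paper does this with $F_\kappa(0)=1$, a Gronwall bound on $\abs{F_\kappa'}^2+R_\kappa\abs{F_\kappa}^2$, and equipartition. Your WKB-amplitude remark points the same way but needs exactly these estimates.
\item The smooth extension across the focal segment does not cover the two foci, where $h=0$. The paper handles them by a weak formulation, using that points have zero capacity, followed by elliptic regularity.
\end{itemize}
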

\begin{figure}[t]
  \centering
  \includegraphics[width=0.48\textwidth]{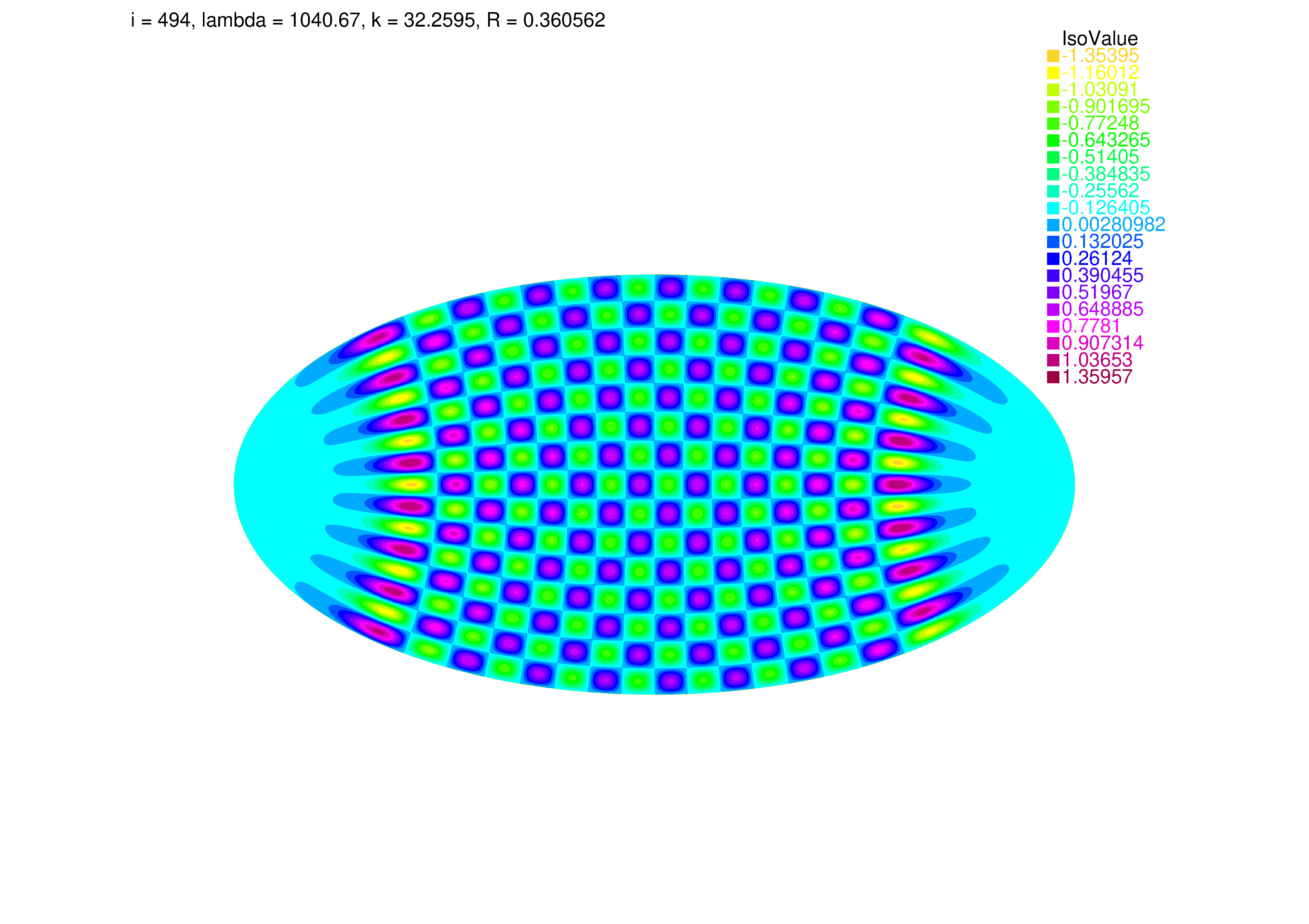}\hfill
  \includegraphics[width=0.48\textwidth]{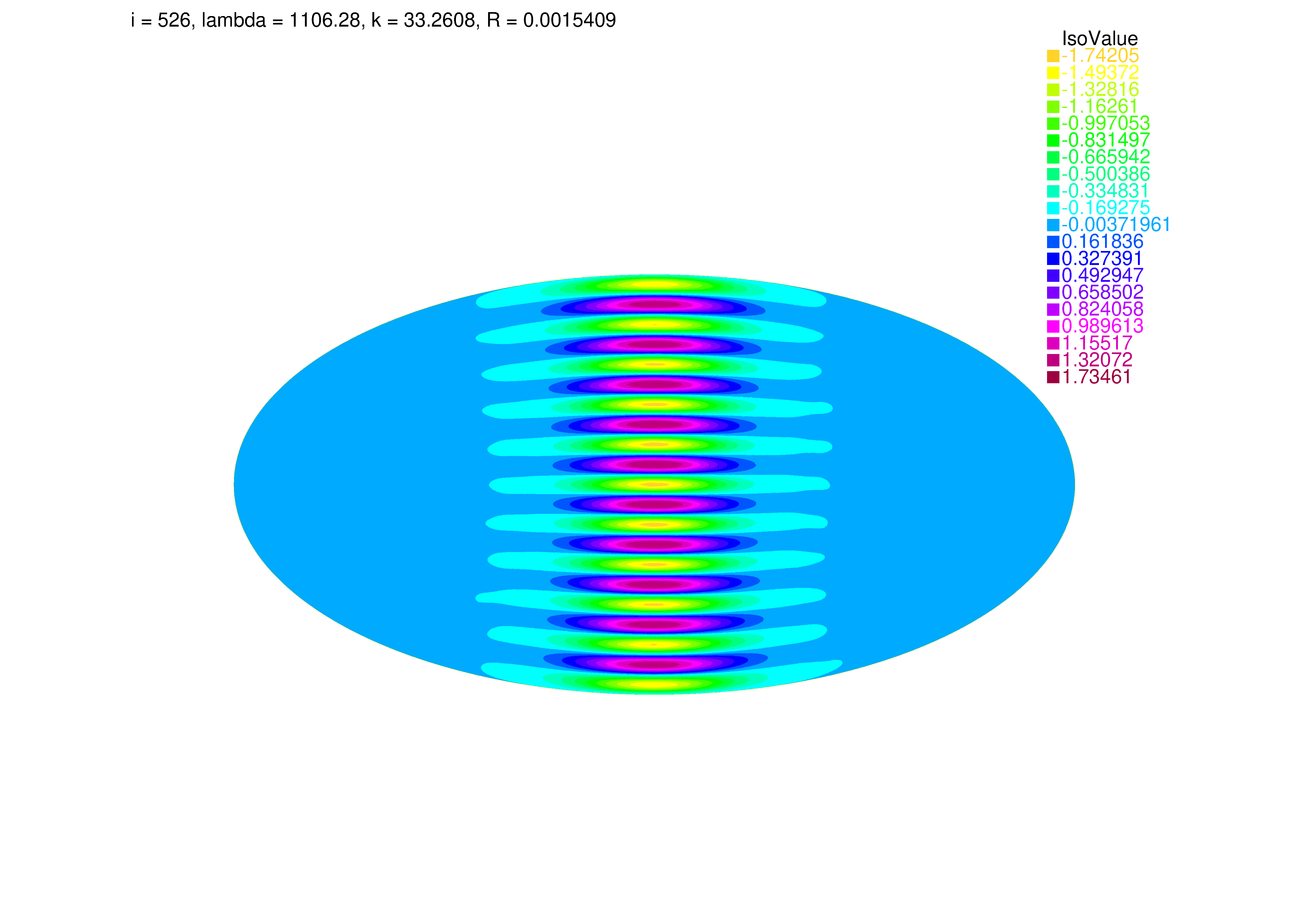}\\[2ex]
  \includegraphics[width=0.48\textwidth]{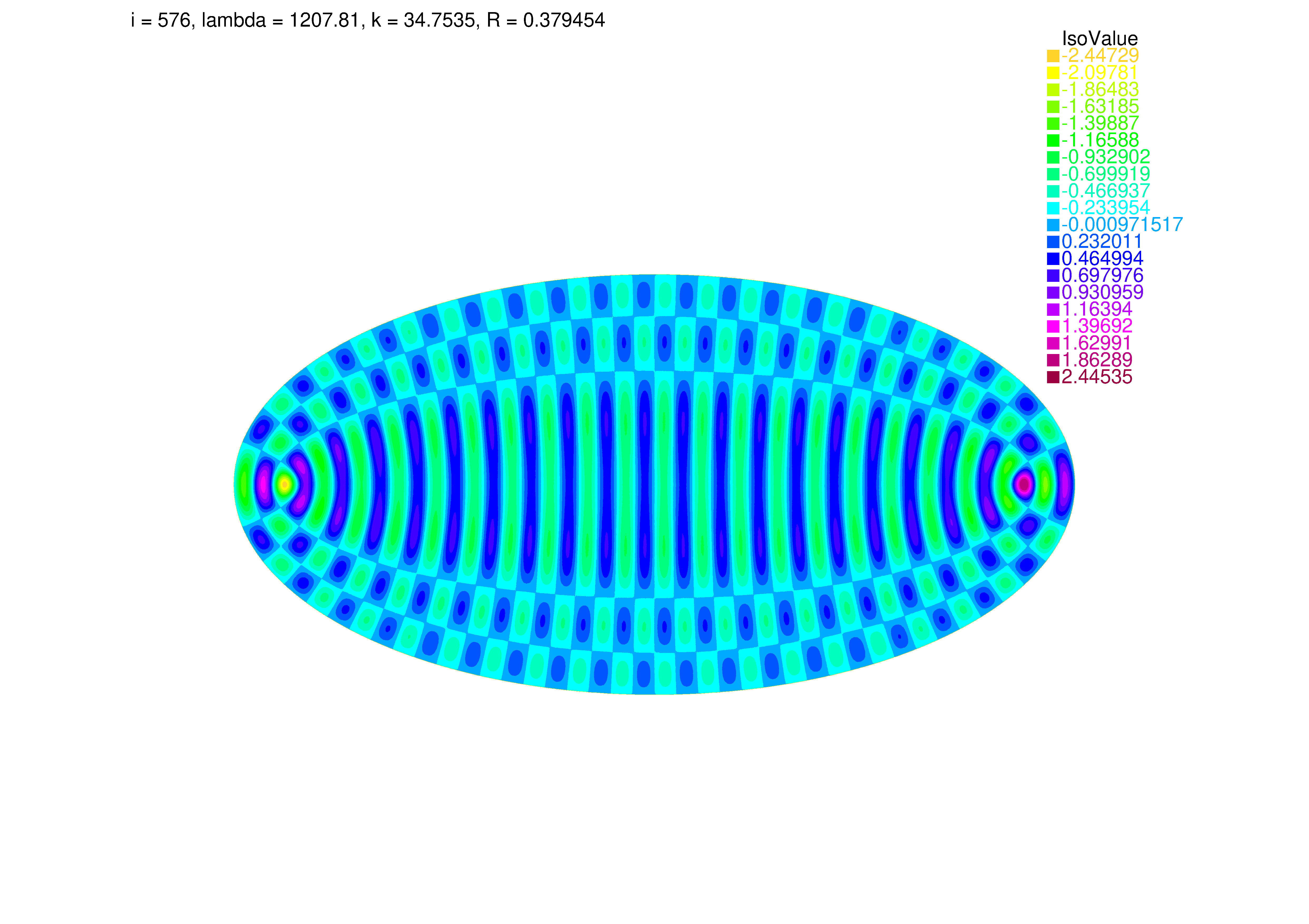}\hfill
  \includegraphics[width=0.48\textwidth]{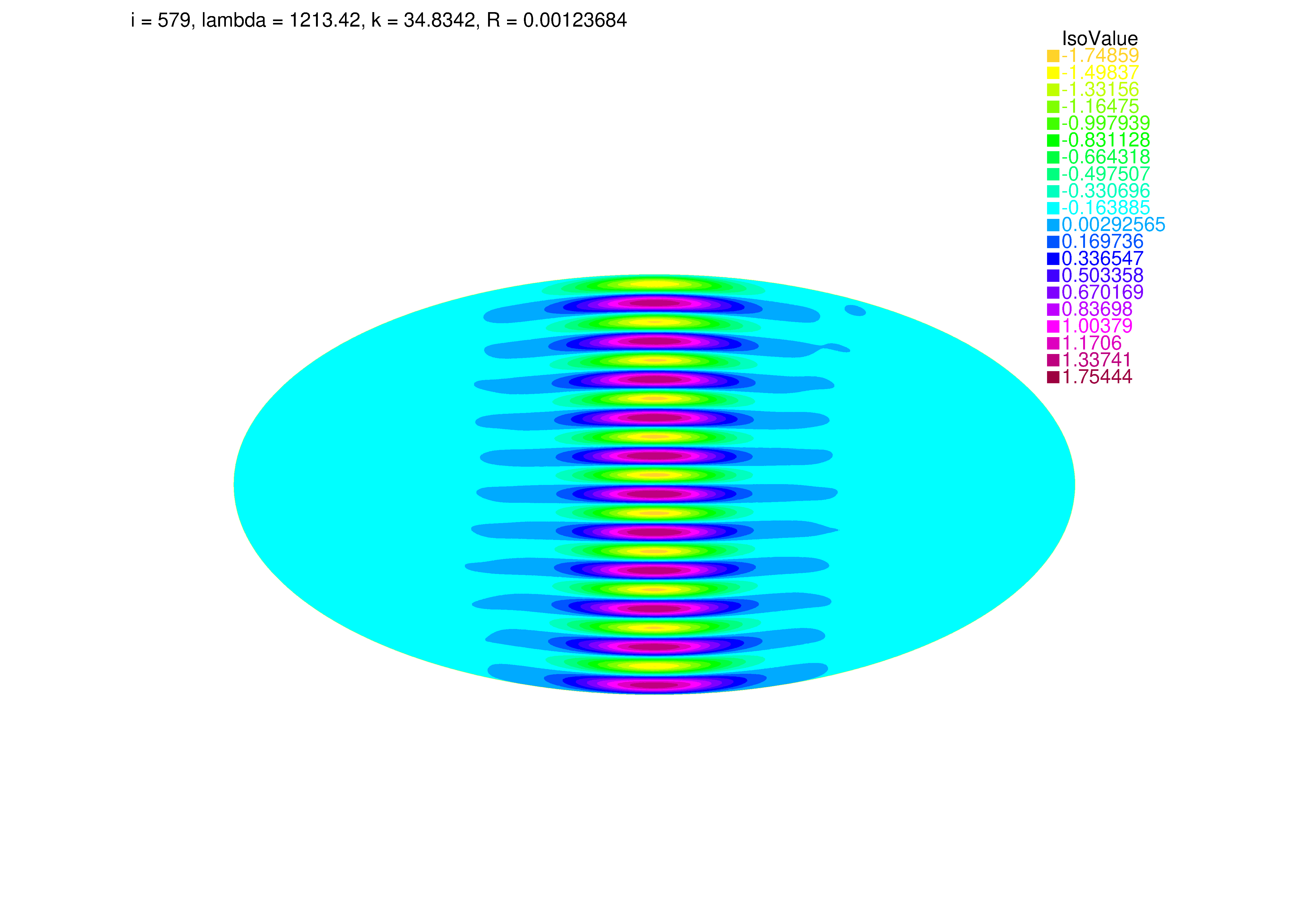}
  \caption{Numerically computed Dirichlet eigenfunctions of the ellipse $E$ with $a=2$, $b=1$ (the header of each panel gives the index of the eigenvalue, the eigenvalue $\lambda$ and $k=\sqrt\lambda$). Left column: two non-localized modes (indices $494$ and $576$). Right column: two modes localized along the minor axis (indices $526$ and $579$), as in Proposition~\ref{prop:ell-branch}(iii); the mass outside a thin slab around the minor axis is negligible, which is what makes them invisible to an interface placed outside the slab.}
  \label{fig:panel}
\end{figure}
These eigenmodes allow the construction of quasimodes with exponentially large resolvent. Fix $\eps_0>0$ with
\begin{equation}\label{eq:eps0}
\tfrac32\eps_0<\eps\qquad\text{and}\qquad \eps_0<a\sin(\pi/8),
\end{equation}
and an even cutoff $\phi\in C^\infty(\R)$ with $0\le\phi\le1$, $\phi(x)=1$ for $\abs x\le\eps_0$ and $\phi(x)=0$ for $\abs x\ge\tfrac32\eps_0$. For a function $g$ on $E$ we write $\phi g$ for the function $(x,y)\mapsto\phi(x)g(x,y)$, extended by zero to $\Om_W$; if $g$ vanishes on $\partial E$, this extension vanishes on $\partial\Om_W$ and in a neighbourhood of $\overline\Gamma$, because $\supp\phi g\subset\overline{S_{3\eps_0/2}}$, $\Om_W\cap S_\eps=E\cap S_\eps$ and $\overline\Gamma\cap\overline{S_\eps}=\emptyset$. The cutoff is two-sided, so that the construction applies to every ellipse configuration, whichever side of the slab carries the interface.
\begin{corollary}[Energy-space quasimodes and resolvent growth]\label{lem:ell-resolvent}
Set $V_j:=(\phi U_j,\ i\kappa_j\phi U_j,\ 0)$, $j\ge N$. Then $V_j\in\dom(\cA)$,
\[
(\cA-i\kappa_jI)V_j=\bigl(0,\ \phi''U_j+2\phi'\partial_x U_j,\ 0\bigr),
\qquad
\tfrac12c_0\kappa_j\le\norm{V_j}_{\HH}\le C\kappa_j
\]
for all $j$ large. Consequently, with $\widehat V_j:=V_j/\norm{V_j}_{\HH}$ and
\begin{equation}\label{eq:ell-eps}
\eps_j:=\norm{(\cA-i\kappa_jI)\widehat V_j}_{\HH},
\end{equation}
there are $C_{\eps_0}>0$ and $\beta>0$, the latter independent of $\eps_0$, such that $0<\eps_j\le C_{\eps_0}e^{-\beta\eps_0^2\kappa_j}$ for all $j\ge N$; in particular $\eps_j\to0$, $\sum_j\eps_j<\infty$, and
\[
\norm{(i\kappa_jI-\cA)^{-1}}_{\cL(\HH)}\ \ge\ \frac1{\eps_j}\ \ge\ C_{\eps_0}^{-1}\,e^{\beta\eps_0^2\kappa_j}.
\]
\end{corollary}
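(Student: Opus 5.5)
The plan is to verify the statement directly from the structure of $\cA$ and the properties of Proposition~\ref{prop:ell-branch}, in four steps.

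\textbf{Domain membership.} The function $\phi U_j$ is smooth and vanishes on $\partial E$. It is supported in $\overline{S_{3\eps_0/2}}$, where $\Om_W$ and $E$ coincide, and it vanishes in a neighbourhood of $\overline\Gamma$. Hence $\phi U_j\in H^1_{\Gamma_W}(\Om_W)$, its second component $i\kappa_j\phi U_j$ lies in $H^1$, and $\Delta(\phi U_j)\in L^2$. Since the heat component is $0$, both interface conditions hold trivially on $\Gamma$: $w_t|_\Gamma=0=u|_\Gamma$, and $\partial_{\mathbf n}w=0=\partial_{\mathbf n}u$ there. So $V_j\in\dom(\cA)$, with $\cA V_j=(i\kappa_j\phi U_j,\ \Delta(\phi U_j),\ 0)$.

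\textbf{The identity and the norm bounds.} Subtracting $i\kappa_jV_j$, the first component cancels. The second component is $\Delta(\phi U_j)+\kappa_j^2\phi U_j=\phi''U_j+2\phi'\partial_xU_j$, using $-\Delta U_j=\kappa_j^2U_j$ and the fact that $\phi$ depends only on $x$. For the norm,
\[
\norm{V_j}_{\HH}^2=\norm{\nabla(\phi U_j)}^2+\kappa_j^2\norm{\phi U_j}^2 .
\]
This is at least $\kappa_j^2\norm{\phi U_j}_{L^2}^2$. Moreover $\norm{\phi U_j}_{L^2}\ge\norm{U_j}_{L^2}-\norm{U_j}_{L^2(E\setminus S_{\eps_0})}\ge c_0-o(1)$ by (ii) and (iii) with $\eps'=\eps_0$, which is admissible by \eqref{eq:eps0}. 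This gives the lower bound $\tfrac12c_0\kappa_j$ for $j$ large. For the upper bound, $\norm{\nabla U_j}^2=\kappa_j^2\norm{U_j}^2$ by integration by parts, and $\phi'$ is bounded, so $\norm{V_j}_{\HH}\le C\kappa_j$ by (ii).

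\textbf{Exponential smallness of $\eps_j$.} This is the only real estimate. The key observation is that $\phi'$ and $\phi''$ are supported in $\eps_0\le\abs x\le\tfrac32\eps_0$, hence outside $S_{\eps_0}$. Therefore
\[
\norm{\phi''U_j+2\phi'\partial_xU_j}_{L^2}\le C\norm{U_j}_{H^1(E\setminus S_{\eps_0})}\le C\,C_{1,\eps_0}\kappa_j^2e^{-c_1\eps_0^2\kappa_j}
\]
by (iii) with $m=1$. Dividing by $\norm{V_j}_{\HH}\ge\tfrac12c_0\kappa_j$ gives $\eps_j\le C\kappa_je^{-c_1\eps_0^2\kappa_j}$. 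Absorbing the polynomial factor into half the exponent gives $\eps_j\le C_{\eps_0}e^{-\beta\eps_0^2\kappa_j}$ with $\beta=c_1/2$, which is independent of $\eps_0$ because $c_1$ is. The constant $C_{\eps_0}$ is enlarged if necessary to cover the finitely many $j\ge N$ below the "large $j$" threshold; for those $j$ it suffices that $V_j\neq0$, which holds since $\phi U_j\not\equiv0$ by unique continuation for the eigenfunction.

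\textbf{Positivity, summability and the resolvent bound.} If $\eps_j=0$, then $i\kappa_j$ would be an eigenvalue of $\cA$, contradicting Lemma~\ref{prop:no-classical} (or Lemma~\ref{lem:hw-nospec}); hence $\eps_j>0$. Summability $\sum_j\eps_j<\infty$ follows from (iv), since $\kappa_j\gtrsim j$ and the bound on $\eps_j$ decays exponentially in $\kappa_j$. Finally, $i\kappa_j\in\rho(\cA)$ by Lemma~\ref{lem:hw-nospec}. Applying the resolvent to $(i\kappa_jI-\cA)\widehat V_j$ gives $1=\norm{\widehat V_j}\le\norm{(i\kappa_jI-\cA)^{-1}}\,\eps_j$, which is the claimed resolvent lower bound.

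The main point needing care is the choice $\eps'=\eps_0$ in Proposition~\ref{prop:ell-branch}(iii) together with the support of $\phi'$. The rest is bookkeeping.
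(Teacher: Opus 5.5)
Your proposal is correct and follows essentially the same route as the paper: domain membership from the support of $\phi U_j$, the identity for $\Delta(\phi U_j)$, norm bounds from Proposition~\ref{prop:ell-branch}(ii)--(iii), the residual estimate via (iii) with $m=1$ on the strip carrying $\phi',\phi''$ (absorbing a factor $\kappa_j$ into half the exponent, $\beta=c_1/2$), and the resolvent bound from $i\kappa_j\in\rho(\cA)$. Your extra steps — enlarging $C_{\eps_0}$ to cover the finitely many small $j$ (using $V_j\neq0$) and deriving $\sum_j\eps_j<\infty$ explicitly from (iv) — only spell out details the paper leaves implicit.
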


\begin{remark}[Pseudospectral, not spectral]\label{rem:ell-pseudo}
Because $\cA$ is not self-adjoint, an exponentially accurate quasimode does \emph{not} place a spectral point, let alone an eigenvalue, exponentially close to $i\kappa_j$: for a non-normal generator the resolvent norm can be large far from $\sigma(\cA)$. We therefore assert the resolvent lower bound of Corollary~\ref{lem:ell-resolvent} and nothing more; in particular we do \emph{not} assert $\dist(i\kappa_j,\sigma(\cA))\le Ce^{-\beta\epsilon_0^2\kappa_j}$, and indeed $\sigma(\cA)\cap i\R=\emptyset$. The $\widehat V_j$ are not eigenvectors, the residual in \eqref{eq:ell-eps} being nonzero, and the frequencies $\kappa_j\to\infty$ do not accumulate at any finite point, so they do not form a Weyl sequence for a fixed imaginary $\lambda$. The resonance produced below is thus {\bf non-spectral}.\
\end{remark}

\begin{proof}[Proof of Corollary~\ref{lem:ell-resolvent}]
By construction $\phi U_j\in C^\infty(\overline{\Om_W})$ vanishes on $\partial\Om_W$ and near $\overline\Gamma$, so $V_j\in\dom(\cA)$ with $\cA V_j=(i\kappa_j\phi U_j,\Delta(\phi U_j),0)$, and $\Delta(\phi U_j)=\phi\Delta U_j+2\phi'\partial_xU_j+\phi''U_j=-\kappa_j^2\phi U_j+2\phi'\partial_xU_j+\phi''U_j$ gives the formula for $(\cA-i\kappa_jI)V_j$. Since $\norm{\nabla U_j}_{L^2(E)}=\kappa_j\norm{U_j}_{L^2(E)}$, Proposition~\ref{prop:ell-branch}(ii) gives $\norm{V_j}_{\HH}\le C\kappa_j$, while $\norm{V_j}_{\HH}\ge\kappa_j\norm{\phi U_j}_{L^2}\ge\kappa_j\bigl(\norm{U_j}_{L^2(E)}-\norm{U_j}_{L^2(E\setminus S_{\eps_0})}\bigr)\ge\tfrac12c_0\kappa_j$ for large $j$ by (ii) and (iii). The residual is supported in the strip $\{\eps_0\le\abs x\le\tfrac32\eps_0\}$, hence by (iii) with $m=1$
\[
\norm{(\cA-i\kappa_jI)V_j}_{\HH}\le\bigl(\norm{\phi''}_\infty+2\norm{\phi'}_\infty\bigr)\norm{U_j}_{H^1(E\setminus S_{\eps_0})}\le C_{\eps_0}\kappa_j^2e^{-c_1\eps_0^2\kappa_j},
\]
and dividing by $\norm{V_j}_{\HH}$ yields the bound on $\eps_j$ with $\beta:=c_1/2$ (the factor $\kappa_j$ being absorbed in $C_{\eps_0}$). Finally $\eps_j>0$ because $i\kappa_j\in\rho(\cA)$, and $1=\norm{(i\kappa_jI-\cA)^{-1}(i\kappa_jI-\cA)\widehat V_j}_{\HH}\le\norm{(i\kappa_jI-\cA)^{-1}}\,\eps_j$.
\end{proof}

\subsection{Failure of periodic well-posedness: the high-frequency catastrophe}
The following theorem provides, for every target period $T$, time-periodic solutions of the heat-wave system with vanishing heat component, supported in the slab $S_{3\eps_0/2}$, whose response is bounded from below while the forcing is exponentially small. Together with Corollary~\ref{cor:obstruction} it proves part~(i) of Theorems~\ref{thm:ellipse} and~\ref{thm:ellipse-general}.

\begin{theorem}[Witnessing pairs]\label{thm:obstruction}
Let $(\Om_W,\Om_H)$ be an ellipse configuration, let $\eps_0$ and $\phi$ be as in \eqref{eq:eps0}, and let $T>0$. For $n\ge N$ set $N_n:=\lfloor T\kappa_n/2\pi\rfloor$, $T_n:=2\pi N_n/\kappa_n$ and $T^n:=2\pi(N_n+1)/\kappa_n$, so that
\[
T_n\le T\le T^n,\qquad T^n-T_n=\frac{2\pi}{\kappa_n}\le\frac{C}{n},
\]
and define
\[
w_n(t,x,y):=\sin(\kappa_nt)\,\phi(x)U_n(x,y),\qquad
f_n:=\partial_t^2w_n-\Delta w_n=-\sin(\kappa_nt)\bigl(\phi''U_n+2\phi'\partial_xU_n\bigr),
\]
both extended by zero to $\Om_W$. Then $W_n:=(w_n,\partial_tw_n,0)$ is a classical solution of $\dot W_n=\cA W_n+F_n$, $F_n:=(0,f_n,0)$, with values in $\dom(\cA)$; $w_n$ and $f_n$ vanish outside $S_{3\eps_0/2}$ and are $2\pi/\kappa_n$-periodic, hence $T_n$- and $T^n$-periodic; and for every integer $m\ge0$ and all $n$ large,
\begin{equation}\label{eq:witness-bounds}
\begin{aligned}
\norm{\partial_tw_n}_{L^2((0,T_n)\times\Om_W)}\ &\ge\ c\,\kappa_n,\qquad
\norm{w_n}_{L^2((0,T_n)\times\Om_W)}\ \ge\ c,\\
\norm{f_n}_{H^{m}(0,T_n;L^2(\Om_W))}\ &\le\ C_{m,\eps_0}\,\kappa_n^{m+2}\,e^{-c_1\eps_0^2\kappa_n},
\end{aligned}
\end{equation}
with $c>0$ depending only on $T$, $a$, $b$, and $C_{m,\eps_0}$ depending on $m$, $\eps_0$, $T$, $a$, $b$. The same bounds hold with $T^n$ in place of $T_n$.
\end{theorem}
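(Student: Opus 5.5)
The proof is a direct verification modelled on the rectangle example (Proposition~\ref{prop:mmsw}), with the eigenmodes of Proposition~\ref{prop:ell-branch} in place of $\sin(nx)\phi(y)$. It proceeds in four steps.

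\emph{Step 1: the arithmetic of the periods.} Since $N_n\le T\kappa_n/2\pi<N_n+1$, we get $T_n\le T\le T^n$ and $T^n-T_n=2\pi/\kappa_n$. Proposition~\ref{prop:ell-branch}(iv) gives $\kappa_n\ge\pi(n-C_2)/b$, hence $2\pi/\kappa_n\le C/n$ for $n$ large. Because $w_n$ is $2\pi/\kappa_n$-periodic in $t$ and $T_n,T^n$ are integer multiples of $2\pi/\kappa_n$, both $w_n$ and $f_n$ are $T_n$- and $T^n$-periodic. This needs $N_n\ge1$, which holds once $\kappa_n>2\pi/T$.

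\emph{Step 2: the solution property.} As in the discussion preceding Corollary~\ref{lem:ell-resolvent}, $\phi U_n$ is smooth on $\overline{\Om_W}$. It vanishes on $\partial\Om_W$, because $U_n$ vanishes on $\partial E$ and $\Om_W\cap S_\eps=E\cap S_\eps$. It also vanishes in a neighbourhood of $\overline\Gamma$, so both transmission conditions hold with zero heat component: $\partial_t w_n|_\Gamma=0=\partial_{\mathbf n}w_n|_\Gamma$. Hence $W_n(t)\in\dom(\cA)$ for every $t$, and $W_n$ is $C^\infty$ in $t$ with values in $\dom(\cA)$. The identity $f_n=\partial_t^2w_n-\Delta w_n$ is the Leibniz computation from the proof of Corollary~\ref{lem:ell-resolvent}: the terms $-\kappa_n^2\sin(\kappa_nt)\phi U_n$ cancel, leaving $-\sin(\kappa_nt)(\phi''U_n+2\phi'\partial_xU_n)$. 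This gives $\dot W_n=\cA W_n+F_n$ classically. Support in $S_{3\eps_0/2}$ follows from $\supp\phi$.

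\emph{Step 3: the lower bounds.} Write $T_n=N_n\cdot 2\pi/\kappa_n$ and use $\int_0^{T_n}\sin^2(\kappa_nt)\,dt=\int_0^{T_n}\cos^2(\kappa_nt)\,dt=T_n/2$. This yields
\[
\norm{w_n}_{L^2((0,T_n)\times\Om_W)}^2=\tfrac{T_n}{2}\norm{\phi U_n}^2_{L^2},\qquad \norm{\partial_tw_n}_{L^2((0,T_n)\times\Om_W)}^2=\kappa_n^2\tfrac{T_n}{2}\norm{\phi U_n}^2_{L^2}.
\]
From the proof of Corollary~\ref{lem:ell-resolvent}, $\norm{\phi U_n}_{L^2}\ge\tfrac12c_0$ for $n$ large, by Proposition~\ref{prop:ell-branch}(ii),(iii). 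Moreover $T_n\ge T-C/n\ge T/2$ for $n$ large. Together these give the two lower bounds with $c$ depending on $T,a,b$. The case $T^n\ge T$ is easier.

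\emph{Step 4: the forcing bound (the only step needing care).} Here $f_n$ factors as $\sin(\kappa_nt)g_n$ with $g_n:=-(\phi''U_n+2\phi'\partial_xU_n)$. Each time derivative contributes at most $\kappa_n$, so
\[
\norm{f_n}_{H^m(0,T_n;L^2)}\le C_m T_n^{1/2}(1+\kappa_n)^m\norm{g_n}_{L^2}.
\]
Since $g_n$ is supported where $\eps_0\le|x|\le\tfrac32\eps_0$, we have $\norm{g_n}_{L^2}\le C_{\eps_0}\norm{U_n}_{H^1(E\setminus S_{\eps_0})}$. Proposition~\ref{prop:ell-branch}(iii) with $m=1$ and $\eps'=\eps_0$ bounds this by $C\kappa_n^2e^{-c_1\eps_0^2\kappa_n}$; this is admissible because $\eps_0<a\sin(\pi/8)$ by \eqref{eq:eps0}. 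Using $T_n\le T$ and $T^n\le T+C$ gives the stated $\kappa_n^{m+2}e^{-c_1\eps_0^2\kappa_n}$ bound for both periods. I expect no real obstacle. The points to watch are that the decay exponent is exactly $c_1$ from Proposition~\ref{prop:ell-branch}(iii), and that the lower bound $\norm{\phi U_n}_{L^2}\ge c_0/2$ holds uniformly for large $n$.
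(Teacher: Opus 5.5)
Your proposal is correct and follows essentially the same route as the paper's proof. The paper likewise checks the identity for $f_n$ by the Leibniz computation with zero heat component, uses exact $\sin^2$/$\cos^2$ integrals over integer multiples of $2\pi/\kappa_n$ together with $\norm{\phi U_n}_{L^2}\ge c_0/2$ from Proposition~\ref{prop:ell-branch}(ii),(iii), and bounds each time derivative of the strip-supported forcing via Proposition~\ref{prop:ell-branch}(iii) with $m=1$.
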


\begin{proof}
By Proposition~\ref{prop:ell-branch}(i), $u_n:=\sin(\kappa_nt)U_n$ solves $\partial_t^2u_n-\Delta u_n=0$ in $E$, and $w_n=\phi u_n$ satisfies $\partial_t^2w_n-\Delta w_n=-\phi''u_n-2\phi'\partial_xu_n=f_n$. As in Corollary~\ref{lem:ell-resolvent}, $\phi U_n\in C^\infty(\overline{\Om_W})$ vanishes on $\partial\Om_W$ and near $\overline\Gamma$, so $W_n(t)\in\dom(\cA)$ for every $t$, and $\dot W_n=\cA W_n+F_n$ is the first-order form of the wave equation for $w_n$ with vanishing heat component; the interface conditions hold trivially because $w_n\equiv0$ near $\Gamma$. Periodicity is clear since $\kappa_nT_n,\kappa_nT^n\in2\pi\Z$, with $N_n\ge1$ for $n$ large; the bound $T^n-T_n\le C/n$ follows from Proposition~\ref{prop:ell-branch}(iv).

For the lower bounds, $\int_0^{T_n}\sin^2(\kappa_nt)\,dt=\int_0^{T_n}\cos^2(\kappa_nt)\,dt=T_n/2\ge T/4$ for $n$ large (as $T_n$ is a multiple of $\pi/\kappa_n$ and $T_n\ge T-2\pi/\kappa_n$), so
\[
\norm{w_n}^2_{L^2((0,T_n)\times\Om_W)}=\frac{T_n}{2}\norm{\phi U_n}_{L^2(E)}^2,\qquad
\norm{\partial_tw_n}^2_{L^2((0,T_n)\times\Om_W)}=\frac{T_n}{2}\kappa_n^2\norm{\phi U_n}_{L^2(E)}^2,
\]
and $\norm{\phi U_n}_{L^2(E)}\ge\norm{U_n}_{L^2(E)}-\norm{U_n}_{L^2(E\setminus S_{\eps_0})}\ge c_0/2$ for $n$ large by Proposition~\ref{prop:ell-branch}(ii),(iii). For the forcing, $\partial_t^kf_n=\pm\kappa_n^k\sigma_k(\kappa_nt)\bigl(\phi''U_n+2\phi'\partial_xU_n\bigr)$ with $\sigma_k\in\{\sin,\cos\}$, and the spatial factor is supported in $\{\eps_0\le\abs x\le\tfrac32\eps_0\}$, so Proposition~\ref{prop:ell-branch}(iii) with $m=1$ gives
\[
\norm{\partial_t^kf_n}_{L^2((0,T_n)\times\Om_W)}\le\kappa_n^k\sqrt{T_n}\bigl(\norm{\phi''}_\infty+2\norm{\phi'}_\infty\bigr)\norm{U_n}_{H^1(E\setminus S_{\eps_0})}
\le C_{\eps_0}\sqrt T\,\kappa_n^{k+2}e^{-c_1\eps_0^2\kappa_n};
\]
summing over $0\le k\le m$ proves the last bound in \eqref{eq:witness-bounds}. The argument for $T^n$ is identical.
\end{proof}

\begin{proof}[Proof of Theorem~\textup{\ref{thm:ellipse-general}(i)} \textup{(}and of Theorem~\textup{\ref{thm:ellipse}(i)}\textup{)}]
Fix $T_*>0$ and an integer $m\ge1$, and apply Theorem~\ref{thm:obstruction} with $T=T_*$. The pairs $(F_n,W_n)$ are admissible for the periods $T_n\to T_*$, smooth in time with values in $\dom(\cA)$, and
\begin{align*}
\norm{W_n}_{L^2(0,T_n;\HH)}&\ge\norm{\partial_tw_n}_{L^2((0,T_n)\times\Om_W)}\ge c\kappa_n\ge c,\\
\norm{F_n}_{H^{m-1}_{\#}(0,T_n;\HH)}&=\norm{f_n}_{H^{m-1}(0,T_n;L^2(\Om_W))}\le C_{m-1,\eps_0}\kappa_n^{m+1}e^{-c_1\eps_0^2\kappa_n}\to0 .
\end{align*}
Hence $(F_n,W_n)$ are witnessing pairs in the sense of Definition~\ref{def:period-accum}, and $\cA$ exhibits a high-frequency catastrophe at $T_*$. The forcing acts only in $\Om_W$, at positive distance from $\Gamma$, and the heat component vanishes identically.
\end{proof}

The theorem shows that the PWP estimate is destroyed on every interval of periods, but the period $T_n$ at which the $n$-th witness lives moves with $n$; nothing thus far pins the obstruction to a single period. Before addressing this, we note that no gain of space regularity of the forcing helps either.

\begin{corollary}[Higher order]\label{cor:obstruction}
In the setting of Theorem~\ref{thm:obstruction}, for all integers $\ell,j,m,i\ge0$ and all $n$ large,
\[
\norm{w_n}_{H^\ell(0,T_n;H^j(\Om_W))}\ \ge\ c\,\kappa_n^{\ell+j}
\qquad\text{and}\qquad
\norm{f_n}_{H^m(0,T_n;H^i(\Om_W))}\ \le\ C_{m,i,\eps_0}\,\kappa_n^{m+2i+2}\,e^{-c_{i+1}\eps_0^2\kappa_n},
\]
with $c>0$ depending only on $T,a,b,\ell,j$.
\end{corollary}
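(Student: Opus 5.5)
The argument splits into a lower bound for $w_n$ and an upper bound for $f_n$. In both, the time and space variables are handled separately, since $w_n=\sin(\kappa_n t)\,\phi U_n$ and $f_n=-\sin(\kappa_n t)\,g_n$ with $g_n:=\phi''U_n+2\phi'\partial_xU_n$ are products.

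\emph{Time factor.} On $(0,T_n)$, which is an integer number of periods of $\sin(\kappa_n t)$, the function $\partial_t^k\sin(\kappa_n t)$ equals $\pm\kappa_n^k\sigma_k(\kappa_n t)$ and has squared $L^2(0,T_n)$ norm exactly $\kappa_n^{2k}T_n/2$. Hence
\[
\norm{\sin(\kappa_n\cdot)\,h}_{H^\ell(0,T_n;X)}^2=\tfrac{T_n}{2}\sum_{k\le\ell}\kappa_n^{2k}\norm{h}_X^2
\]
for any fixed $h\in X$. Since $T_n\ge T/2$ for large $n$, the time factor contributes between $c\,\kappa_n^{\ell}$ and $C\kappa_n^{\ell}$. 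It remains to bound $\norm{\phi U_n}_{H^j(\Om_W)}$ from below and $\norm{g_n}_{H^i(\Om_W)}$ from above.

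\emph{Lower bound.} Since the $H^j$ norm dominates the $\dot H^j$ seminorm, it suffices to bound $\norm{D^j(\phi U_n)}_{L^2}$ from below. I would use the $\mathcal H^j$ structure rather than raw derivatives. We have $\norm{U_n}_{\mathcal H^j(E)}=\kappa_n^j\norm{U_n}_{L^2}\ge c_0\kappa_n^j$, and $\norm{\cdot}_{\mathcal H^j(E)}\le C_j\norm{\cdot}_{H^j(E)}$ on $\mathcal H^j(E)$. Writing $U_n=\phi U_n+(1-\phi)U_n$, both pieces lie in $\mathcal H^j(E)$, because they vanish with all $\Delta^m$ on $\partial E$: $\phi U_n$ is an eigenfunction near the boundary inside the slab, and $(1-\phi)U_n$ vanishes near the slab. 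This gives
\[
\norm{\phi U_n}_{H^j}\ \ge\ C_j^{-1}\bigl(c_0\kappa_n^j-\norm{(1-\phi)U_n}_{\mathcal H^j}\bigr).
\]
The remainder $(1-\phi)U_n$ is supported in $E\setminus S_{\eps_0}$. By Proposition~\ref{prop:ell-branch}(iii) its $H^j$ norm is at most $C\kappa_n^{2j}e^{-c_j\eps_0^2\kappa_n}$, which is $o(1)$, so the lower bound $c\,\kappa_n^j$ survives for large $n$. Combining this with the time factor gives $\norm{w_n}_{H^\ell(0,T_n;H^j)}\ge c\,\kappa_n^{\ell+j}$.

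\emph{Upper bound.} The function $g_n$ is supported in $\{\eps_0\le|x|\le\tfrac32\eps_0\}\subset E\setminus S_{\eps_0}$, and the derivatives of $\phi$ are bounded with bounds depending on $i$ and $\eps_0$. Therefore $\norm{g_n}_{H^i}\le C_{i,\eps_0}\norm{U_n}_{H^{i+1}(E\setminus S_{\eps_0})}$. Proposition~\ref{prop:ell-branch}(iii) with $m=i+1$ bounds this by $C\kappa_n^{2i+2}e^{-c_{i+1}\eps_0^2\kappa_n}$. Multiplying by the time factor $C\kappa_n^m$ gives the stated estimate.

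\emph{Main obstacle.} The only delicate point is the lower bound: the $H^j$ norm of the cut-off $\phi U_n$ must genuinely scale like $\kappa_n^j$. Differentiating $\phi U_n$ directly would produce cross terms, and comparing against the eigenfunction through the spectral norm avoids them. The point that needs care is membership of the two pieces in $\mathcal H^j(E)$, together with the norm equivalence on the smooth domain $E$. If that membership is awkward, the fallback is to avoid the decomposition: use the $L^2$-based bound $\norm{\Delta^{j/2}(\phi U_n)}$ (or $\nabla\Delta^{(j-1)/2}$), expand via Leibniz, note that $\Delta^{k}(\phi U_n)=(-\kappa_n^2)^k\phi U_n$ plus terms supported off $S_{\eps_0}$, and bound those terms exponentially.
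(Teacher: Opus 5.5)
Your proof is correct, and the upper bound for $f_n$ is exactly the paper's argument.

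\textbf{One false claim in the lower bound.} For $j\ge3$, neither $\phi U_n$ nor $(1-\phi)U_n$ lies in $\mathcal H^j(E)$. Indeed $\Delta(\phi U_n)=-\kappa_n^2\phi U_n+2\phi'\partial_xU_n+\phi''U_n$ has trace $2\phi'\partial_xU_n$ on $\partial E\cap\{\eps_0<\abs x<\tfrac32\eps_0\}$. That trace is nonzero on a set of positive measure:
\begin{itemize}
\item $\nabla U_n$ is normal on $\partial E$, with $\partial_\nu U_n=h^{-1}F_{\kappa_n}'(\xi_0)G_{\kappa_n}\neq0$, because $F(\xi_0)=0$ forces $F'(\xi_0)\neq0$ and $G>0$;
\item $\nu_x\neq0$ wherever $x\neq0$.
\end{itemize}
The transition strip of the cutoff meets $\partial E$, and your justification (``eigenfunction near the boundary inside the slab, vanishes near the slab'') misses exactly that region.

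\textbf{Why it does not matter.} The only comparison you use is the trivial direction $\norm{\Delta^k\psi}_{L^2}\le 2^k\norm{\nabla^{2k}\psi}_{L^2}$, and its odd analogue with $\nabla\Delta^k$. This is a pointwise bound and holds for every $\psi\in H^j(E)$. Moreover, $\psi\mapsto\norm{\Delta^{j/2}\psi}_{L^2(E)}$ (resp.\ $\norm{\nabla\Delta^{(j-1)/2}\psi}_{L^2(E)}$) is a seminorm on all of $H^j(E)$. So your triangle-inequality step goes through verbatim. Boundary conditions would only matter for the reverse (elliptic-regularity) inequality, which you never invoke.

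\textbf{Comparison with the paper.} The paper's lower bound avoids the cutoff altogether. It restricts the $H^j(\Om_W)$ norm to the core $E_0=E\cap S_{\eps_0}$, where $w_n=\sin(\kappa_nt)U_n$ exactly. There it applies the same pointwise inequality together with $\Delta^kU_n=(-\kappa_n^2)^kU_n$ on $E_0$. It then needs only the $L^2$ and $H^1$ tails of $U_n$ outside the slab, i.e.\ Proposition~\ref{prop:ell-branch}(iii) with $m\le1$, via $\norm{U_n}_{L^2(E_0)}\ge\norm{U_n}_{L^2(E)}-\norm{U_n}_{L^2(E\setminus S_{\eps_0})}$ and its gradient analogue.

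Your global splitting instead needs the $H^j$ tail of $(1-\phi)U_n$, i.e.\ (iii) with $m=j$. That estimate rests on the elliptic-regularity induction in the appendix. It is available, so nothing breaks, but the paper's localisation is more economical. In exchange, you read off the main term directly from the global identity $\norm{U_n}_{\mathcal H^j(E)}=\kappa_n^j\norm{U_n}_{L^2(E)}$ of Proposition~\ref{prop:ell-branch}(ii).

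Your fallback is also valid and close in spirit to the paper's route. There you expand $\Delta^k(\phi U_n)$ by Leibniz, with all error terms supported in the strip, and it needs no $\mathcal H^j$ membership at all.
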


\begin{proof}
Since $w_n=\sin(\kappa_nt)U_n$ on $E_0:=E\cap S_{\eps_0}$, differentiating $\ell$ times in $t$ and using $\int_0^{T_n}\sin^2(\kappa_nt)\,dt=\int_0^{T_n}\cos^2(\kappa_nt)\,dt=T_n/2$ gives
\[
\norm{w_n}_{H^\ell(0,T_n;H^j(\Om_W))}\ge\kappa_n^{\ell}\sqrt{T_n/2}\,\norm{U_n}_{H^j(E_0)} .
\] If $j=2k$, the pointwise bound $\abs{\Delta^kU_n}\le2^k\abs{\nabla^{2k}U_n}$ gives $\norm{U_n}_{H^j(E_0)}\ge2^{-k}\norm{\Delta^kU_n}_{L^2(E_0)}=2^{-k}\kappa_n^{j}\norm{U_n}_{L^2(E_0)}$; if $j=2k+1$, similarly $\norm{U_n}_{H^j(E_0)}\ge2^{-k}\norm{\nabla\Delta^kU_n}_{L^2(E_0)}=2^{-k}\kappa_n^{j-1}\norm{\nabla U_n}_{L^2(E_0)}$, and $\norm{\nabla U_n}_{L^2(E_0)}\ge\norm{\nabla U_n}_{L^2(E)}-\norm{U_n}_{H^1(E\setminus S_{\eps_0})}=\kappa_n\norm{U_n}_{L^2(E)}-\norm{U_n}_{H^1(E\setminus S_{\eps_0})}$. In both cases Proposition~\ref{prop:ell-branch}(ii),(iii) give $\norm{U_n}_{H^j(E_0)}\ge c\kappa_n^j$ for $n$ large. For the forcing, $f_n$ involves $U_n$ and $\partial_xU_n$ on the strip $\{\eps_0\le\abs x\le\tfrac32\eps_0\}\subset E\setminus S_{\eps_0}$, so $\norm{f_n(t)}_{H^i(\Om_W)}\le C_{i,\eps_0}\norm{U_n}_{H^{i+1}(E\setminus S_{\eps_0})}$, and Proposition~\ref{prop:ell-branch}(iii) with $m=i+1$, combined with the time derivatives as in Theorem~\ref{thm:obstruction}, gives the bound.
\end{proof}

\subsection{Proof of infinite derivative loss and smooth resonance}\label{sec:ellipse-loss}

Pinning the failure to one period requires that a single lattice $\frac{2\pi}{T}\Z$ come exponentially close to infinitely many quasimode frequencies. The first step is that the resolvent bound of Corollary~\ref{lem:ell-resolvent} survives on a band around $\kappa_j$, not merely at $\kappa_j$.

\begin{lemma}[Spreading]\label{lem:spread}
For every $\omega\in\R$,
\[
\norm{(i\omega I-\cA)^{-1}}_{\cL(\HH)}\ \ge\ \frac1{\eps_j+\abs{\omega-\kappa_j}},
\qquad j\ge N,
\]
with $\eps_j$ as in \eqref{eq:ell-eps}.
\end{lemma}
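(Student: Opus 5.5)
The plan is to reuse the normalized quasimode $\widehat V_j$ of Corollary~\ref{lem:ell-resolvent} as an approximate eigenvector at the shifted frequency $i\omega$, and then convert the residual bound into a resolvent lower bound. Fix $\omega\in\R$ and $j\ge N$. By Lemma~\ref{lem:hw-nospec} we have $i\omega\in\rho(\cA)$, so $(i\omega I-\cA)^{-1}$ is a bounded operator and the left-hand side is finite. Since $\widehat V_j\in\dom(\cA)$, we may write
\[
(i\omega I-\cA)\widehat V_j=(i\kappa_jI-\cA)\widehat V_j+i(\omega-\kappa_j)\widehat V_j .
\]

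The triangle inequality, together with $\norm{\widehat V_j}_{\HH}=1$ and the definition \eqref{eq:ell-eps}, then gives
\[
\norm{(i\omega I-\cA)\widehat V_j}_{\HH}\le\eps_j+\abs{\omega-\kappa_j}.
\]
We apply the resolvent to this vector, exactly as in the last line of the proof of Corollary~\ref{lem:ell-resolvent}:
\[
1=\norm{\widehat V_j}_{\HH}=\norm{(i\omega I-\cA)^{-1}(i\omega I-\cA)\widehat V_j}_{\HH}\le\norm{(i\omega I-\cA)^{-1}}_{\cL(\HH)}\bigl(\eps_j+\abs{\omega-\kappa_j}\bigr).
\]

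The denominator $\eps_j+\abs{\omega-\kappa_j}$ is strictly positive, because $\eps_j>0$ by Corollary~\ref{lem:ell-resolvent}. Dividing by it yields the claimed inequality.

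There is no genuine obstacle here. The only points needing care are that $\widehat V_j$ lies in $\dom(\cA)$, so the splitting above is legitimate, and that $i\omega$ belongs to the resolvent set for every real $\omega$. Both facts are already established earlier in the paper.
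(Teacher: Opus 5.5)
Your proposal is correct and is essentially the paper's proof: perturb the normalized quasimode $\widehat V_j$ from $i\kappa_j$ to $i\omega$ and apply the triangle inequality to get the residual bound $\eps_j+\abs{\omega-\kappa_j}$. Then apply $(i\omega I-\cA)^{-1}$ to $(i\omega I-\cA)\widehat V_j$, using $i\omega\in\rho(\cA)$ from Lemma~\ref{lem:hw-nospec}; your explicit check that the denominator is positive is a harmless extra detail.
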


\begin{proof}
Recall that $i\omega\in\rho(\cA)$. Since $(\cA-i\omega I)\widehat V_j=(\cA-i\kappa_jI)\widehat V_j+i(\kappa_j-\omega)\widehat V_j$ and $\norm{\widehat V_j}_{\HH}=1$, the triangle inequality gives $\norm{(\cA-i\omega I)\widehat V_j}_{\HH}\le\eps_j+\abs{\omega-\kappa_j}$, and
$$1=\norm{(i\omega I-\cA)^{-1}(i\omega I-\cA)\widehat V_j}_{\HH}\le\norm{(i\omega I-\cA)^{-1}}(\eps_j+\abs{\omega-\kappa_j}).$$
\end{proof}

The band has exponentially small width $\eps_j$, and the question is: Which periods have a lattice entering infinitely many of these bands? The answer is that they form a topologically generic, measure-theoretically negligible set.

\begin{proof}[Proof of Theorem~\textup{\ref{thm:ellipse}(ii)}]
Define
\[
\mathcal{U}_j:=\Bigl\{T>0:\ \dist\bigl(\kappa_j,\tfrac{2\pi}{T}\Z\bigr)<\eps_j\Bigr\},
\qquad
\mathcal R:=\limsup_{j\to\infty}\mathcal{U}_j=\bigcap_{J\ge1}\bigcup_{j\ge J}\mathcal{U}_j .
\]
Each $\mathcal{U}_j$ is open, being the preimage of $(-\infty,\eps_j)$ under the continuous map $T\mapsto\dist(\kappa_j,\frac{2\pi}{T}\Z)$.

\smallskip\noindent\emph{Step 1: $\mathcal R$ is a dense $G_\delta$ set.}
Fix $J\ge1$ and a nonempty open interval $(p,q)\subset(0,\infty)$. Choose $j\ge J$ with $\kappa_j>2\pi/(q-p)$. The points $2\pi n/\kappa_j$, $n\in\N$, form an arithmetic progression of gap $2\pi/\kappa_j<q-p$ increasing to $\infty$, so some $T^\ast=2\pi n/\kappa_j$ lies in $(p,q)$. At that period $\kappa_j=\frac{2\pi n}{T^\ast}\in\frac{2\pi}{T^\ast}\Z$, so $\dist(\kappa_j,\frac{2\pi}{T^\ast}\Z)=0<\eps_j$ and $T^\ast\in \mathcal{U}_j\cap(p,q)$. As $(p,q)$ was arbitrary  it follows that in every neighborhood of any point there can be found an element of $\bigcup_{j\ge J}\mathcal{U}_j$. This implies that $\bigcup_{j\ge J}\mathcal{U}_j$ is dense and open for every $J$, and so $\mathcal R$ is a dense $G_\delta$ by the Baire category theorem.

\smallskip\noindent\emph{Step 2: $\abs{\mathcal R}=0$.}
Fix $I=[p,q]\subset(0,\infty)$ and $j$ so large that $\eps_j\le\kappa_j/2$. If $T\in\mathcal{U}_j\cap I$, there is $n\in\N$ with $\abs{2\pi n/T-\kappa_j}<\eps_j$, so $2\pi n/T\in(\kappa_j/2,2\kappa_j)$ and $n<q\kappa_j/\pi$; for each such $n$ the set of admissible $T$ is the interval
\[
\Bigl(\tfrac{2\pi n}{\kappa_j+\eps_j},\ \tfrac{2\pi n}{\kappa_j-\eps_j}\Bigr),
\qquad\text{of length }\ \frac{4\pi n\,\eps_j}{\kappa_j^2-\eps_j^2}\le\frac{16 q\,\eps_j}{3\kappa_j}.
\]
Summing over the at most $q\kappa_j/\pi$ values of $n$ gives $\abs{\mathcal{U}_j\cap I}\le C_q\eps_j$. Since $\sum_j\eps_j<\infty$ by Corollary~\ref{lem:ell-resolvent}, the Borel--Cantelli lemma yields $\abs{\mathcal R\cap I}=0$ for every such $I$, hence $\abs{\mathcal R}=0$.

\smallskip\noindent\emph{Step 3: infinite derivative loss at each $T\in\mathcal R$.}
Let $T\in\mathcal R$, so $T\in \mathcal{U}_j$ for infinitely many $j$; for each such $j$ pick $n_j\in\Z$ with $\abs{\omega_{n_j}-\kappa_j}<\eps_j$, where $\omega_{n_j}=2\pi n_j/T$. Lemma~\ref{lem:spread} and Corollary~\ref{lem:ell-resolvent} give
\[
\norm{(i\omega_{n_j}I-\cA)^{-1}}\ \ge\ \frac1{\eps_j+\abs{\omega_{n_j}-\kappa_j}}\ \ge\ \frac1{2\eps_j}\ \ge\ \frac1{2C_{\eps_0}}\,e^{\beta\eps_0^2\kappa_j}.
\]
Since $\abs{\omega_{n_j}-\kappa_j}<\eps_j\le1$ for large $j$ and $\kappa_j\to\infty$, we have $\omega_{n_j}\to\infty$ and $e^{\beta\eps_0^2\kappa_j}\ge e^{-\beta\eps_0^2}\,e^{\beta\eps_0^2\omega_{n_j}}$, which dominates every power of $\abs{\omega_{n_j}}$. Hence
$\limsup_{\abs n\to\infty}(1+\abs{\omega_n})^{-m}\norm{(i\omega_nI-\cA)^{-1}}=\infty$ for every $m$, which by Remark~\ref{rem:loss-resolvent} is infinite derivative loss at the period $T$; the sampled resolvent grows exponentially along the subsequence $(\omega_{n_j})$.

\smallskip\noindent\emph{Step 4: smooth resonant forcing.}
Fix $T\in\mathcal R$. We have $i\omega_n\in\rho(\cA)$ for all $n$, and by Step~3 the system has infinite derivative loss at $T$.
Theorem~\ref{thm:loss-secular} therefore provides $f\in C^\infty_{\#}(0,T;\HH)$ admitting no $T$-periodic mild solution, and Theorem~\ref{thm:equiv} makes $(T,f)$ statically and secularly resonant.
\end{proof}

\begin{remark}\label{rem:ell-sharpness}
Both the genericness and the measure nullity are intrinsic to the mechanism at hand. Density comes from the commensurabilities $T=\frac{2\pi n}{\kappa_j}$, which are dense in $(0,\infty)$ for each $j$; nullity comes from exponential smallness of the bands, which makes $\sum_j\eps_j$ converge. A quasimode branch with only polynomially small residual would give a larger set of resonant periods but a weaker resolvent bound, and hence no infinite loss. The construction is thus not improvable to a positive-measure statement by these techniques; whether infinite loss can hold on a set of positive measure, for this or any geometry, is then left open.
\end{remark}

\begin{remark}\label{rem:ell-baire-forcing}
A second Baire argument is available and gives a complementary conclusion. Fix $T\in\mathcal R$ and $m\in\N$. One expects, from the failure of the estimate of Definition~\ref{def:finite-infinite-loss} at every level and the uniform boundedness principle, that the forcings in $H^m_{\#}(0,T;\HH)$ admitting no $T$-periodic mild solution form a residual set; this is the type of statement obtained in \cite{LaurentRivas2026}, and we do not pursue it here (the step from the Fourier-mode solution to a mild solution, discussed after Proposition~\ref{prop:weighted-pwp}, is where care is needed).
\end{remark}

\subsection{Sharpness of the geometric conditions}\label{sec:ellipse-sharp}

Nothing in Sections~\ref{sec:ellipse-quasimode}--\ref{sec:ellipse-loss} used the wave domain beyond the slab $S_\eps$: the eigenfunctions $U_j$ are exponentially small outside it by Proposition~\ref{prop:ell-branch}(iii), the cutoff $\phi$ is supported inside it, and the interface is excluded from it by Definition~\ref{def:ellipse}. This is why the arguments above prove Theorem~\ref{thm:ellipse-general} and not only Theorem~\ref{thm:ellipse}. We record the consequences relevant for the periodic theory.

\begin{corollary}[Infinite regularity loss on ellipse configurations]\label{cor:ellipse-family}
Let $(\Om_W,\Om_H)$ be an ellipse configuration in the sense of Definition~\ref{def:ellipse}. Then:
\begin{enumerate}
\item[\textup{(a)}] for every $T_*>0$, every $m\in\N$ and every $M>0$ there are a period $T$ with $\abs{T-T_*}<1/M$ and a $T$-periodic pair $(f,u)$, smooth in time, solving \eqref{eq:hw} with
$\norm{u}_{L^2(0,T;\HH)}\ge M\norm{f}_{H^m_{\#}(0,T;\HH)}$;\footnote{Space regularity of the forcing does not help either, see Corollary~\ref{cor:obstruction}.}
\item[\textup{(b)}] for every $T$ in the dense $G_\delta$ set $\mathcal R$ of periods, no estimate of the form $\norm{u}_{L^2(0,T;\HH)}\le C\norm{f}_{H^m_{\#}(0,T;\HH)}$ holds, for any $m$ and any $C$.
\end{enumerate}
\end{corollary}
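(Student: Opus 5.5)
Both parts are repackagings of results already proved for ellipse configurations, so the plan is to assemble them, taking care with the norms: the statement uses $\norm{u}_{L^2(0,T;\HH)}$ and $\norm{f}_{H^m_{\#}(0,T;\HH)}$, with index $m$ rather than $m-1$.

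For (a), fix $T_*$, $m$ and $M$, and apply Theorem~\ref{thm:obstruction} with $T=T_*$. This gives the pairs $(F_n,W_n)$, which are $T_n$-periodic and smooth in time with values in $\dom(\cA)$, and satisfy $\abs{T_n-T_*}\le 2\pi/\kappa_n\to0$. Since the second component of $W_n$ is $\partial_t w_n$, we have
\[
\norm{W_n}_{L^2(0,T_n;\HH)}\ge\norm{\partial_tw_n}_{L^2((0,T_n)\times\Om_W)}\ge c\kappa_n .
\]
The forcing $F_n=(0,f_n,0)$ satisfies $\norm{F_n}_{H^m_{\#}(0,T_n;\HH)}=\norm{f_n}_{H^m(0,T_n;L^2(\Om_W))}$, and by \eqref{eq:witness-bounds} this is at most $C_{m,\eps_0}\kappa_n^{m+2}e^{-c_1\eps_0^2\kappa_n}$. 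The ratio of response to forcing is therefore at least $c\,C_{m,\eps_0}^{-1}\kappa_n^{-m-1}e^{c_1\eps_0^2\kappa_n}$, which tends to infinity. We then choose $n$ large enough that this ratio exceeds $M$ and $2\pi/\kappa_n<1/M$, and set $T:=T_n$, $(f,u):=(F_n,W_n)$. The pair solves \eqref{eq:hw} classically, with vanishing heat component.

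For (b), fix $T\in\mathcal R$, and let $m$ and $C$ be arbitrary. By Step~3 of the proof of Theorem~\ref{thm:ellipse}(ii), we have $\limsup_{\abs n\to\infty}(1+\abs{\omega_n})^{-m}\norm{R_{i\omega_n}}=\infty$. Remark~\ref{rem:loss-resolvent}, applied with $m+1$ in place of $m$, turns this into $(m+1)$-derivative loss, which is exactly the failure of $\norm{u}_{L^2}\le C\norm{f}_{H^m_{\#}}$ over admissible pairs. To avoid leaning only on the remark, we can also exhibit the violating pairs directly. Take $x\in H$ with $\norm{x}=1$ and $\norm{R_{i\omega_n}x}\ge\tfrac12\norm{R_{i\omega_n}}$, and form the single-mode pair
\[
f(t)=e^{i\omega_nt}x,\qquad u(t)=-e^{i\omega_nt}R_{i\omega_n}x .
\]
This is a classical $T$-periodic solution, since $R_{i\omega_n}x\in\dom(\cA)$. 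Its norms are $\norm{u}_{L^2}=\sqrt T\norm{R_{i\omega_n}x}$ and $\norm{f}_{H^m_{\#}}\simeq\sqrt T(1+\abs{\omega_n})^m$, so the ratio $\norm{u}_{L^2}/\norm{f}_{H^m_{\#}}$ is comparable to $(1+\abs{\omega_n})^{-m}\norm{R_{i\omega_n}}$. Along the subsequence $n_j$ of Step~3 this is unbounded, which rules out any constant $C$.

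No real obstacle is expected. The only care needed is the bookkeeping of the index shift between $H^m$ here and $H^{m-1}$ in Definition~\ref{def:finite-infinite-loss}, and using the $\partial_t w_n$ component so that the response lower bound is taken in the $\HH$ norm.
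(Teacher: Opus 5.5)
Your proposal is correct and matches the argument the paper leaves implicit. Part (a) is read off from the witnessing pairs of Theorem~\ref{thm:obstruction} via \eqref{eq:witness-bounds}, and part (b) from Step~3 of the proof of Theorem~\ref{thm:ellipse}(ii) combined with Remark~\ref{rem:loss-resolvent}, with the index shift handled as you do; your single-mode pair $f=e^{i\omega_nt}x$, $u=(i\omega_nI-\cA)^{-1}f$ usefully proves directly the one direction of Remark~\ref{rem:loss-resolvent} that is needed, which the paper only asserts.
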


Consequently no a priori bound with any finite loss of derivatives can hold for the heat-wave system once a configuration of Definition~\ref{def:ellipse} is admitted. This delimits the geometric hypotheses under which periodic well-posedness in Sobolev scales has been obtained: for the truncated ellipses $\Om_W^{d}$ of Section~\ref{sec:taxonomy}, the passage from finite loss (the graph condition of \cite{MMSW2024}, $d\le0$) to infinite loss ($d>0$) takes place exactly at the semi-ellipse, as discussed after Theorem~\ref{thm:ellipse}, and any geometric or optical hypothesis ensuring finite loss must exclude every configuration of Definition~\ref{def:ellipse}. Finally, since infinite derivative loss at a single period already precludes a polynomial bound on $\norm{R_\lambda}$ along $i\R$, Theorem~\ref{polystab} shows that no ellipse configuration is rationally (nor, a fortiori, uniformly) stable: the quantitative decay theory for the heat-wave system, open in general when the geometric conditions fail, has a definite negative answer here.

\section{Conclusions and new perspectives in infinite-dimensions}\label{sec:conclusions}

For a partially dissipative system the range $\mathcal R(I-S(T))$ is dense but, in the absence of uniform stability, not closed. Resonance is therefore always available: some $b\in H$ lies outside the range, the trick of Pr\"uss recorded in \cite[p.222]{Galdi} realizes any such $b$ as the kick $b_f$ of an $L^1_{\#}$ forcing, and Theorem~\ref{thm:equiv} converts this into an unbounded response. The question is never \emph{whether} resonance occurs, but how one reaches it, and how regular the forcing can be.

That question splits in two. One has to produce a periodic well-posedness space, a dense set of forcings whose kicks land inside the range; and one has to leave it in a controlled way. Proposition~\ref{prop:Galdi} settles the first task abstractly but names no space. Proposition~\ref{prop:weighted-pwp} names one, built from the resolvent on the sampling lattice, and, under polynomial growth, it is a Sobolev space \cite{GMW2026,LaurentRivas2026}. Neither construction describes $\mathcal R(I-S(T))$ itself, and in none of our examples do we describe it: the range is approached from inside, through spaces that sit in it, and from outside, through explicit forcings that miss it.

Derivative loss is what brings together these two ``sides". It measures how far the periodic solution mapping is from being an isomorphism in the natural energy scale, and by Remark~\ref{rem:loss-resolvent} it is the sampled resolvent growth in disguise. At finite loss, the two tasks are both solvable and the outcome is sharp in an unexpected way: smooth forcings are protected (Proposition~\ref{prop:conditional-converse-loss}), yet resonance still occurs, by stacking near-resonant harmonics into a single forcing of finite regularity ``loss" (Proposition~\ref{prop:finite-stacking}). The loss order says how rough the resonant forcing must be. The example of \cite{MMSW2024}, reinterpreted through Proposition~\ref{prop:mmsw}, is exactly that scenario on a rectangle.

At infinite derivative loss, the protection fails, and the stacking argument returns a $C^\infty$ forcing with no periodic response (Theorem~\ref{thm:loss-secular}). Theorem~\ref{thm:ellipse} shows this is not just an abstract possibility. On the resonant ellipse, a trapped billiard orbit carries quasimodes that the interface cannot see, the sampled resolvent grows exponentially, and, on a dense set of periods, a smooth forcing drives the energy to infinity. The dissipation is genuine throughout: the semigroup is strongly stable and there is no spectrum on the imaginary axis. {\em Damping may eliminate free oscillations without eliminating resonance.}

Several questions remain open. We do not know which temporal growth rates are attainable within the strongly stable class. The linear rate is excluded by Corollary~\ref{cor:sublinear-ss}, the continuous-spectrum example of Proposition~\ref{prop:notconverse} gives $\sqrt t$, and the resonant ellipse gives no identified rate at all. The Sobolev threshold between the protected and the resonant regime at finite loss is not determined by Propositions~\ref{prop:conditional-converse-loss} and~\ref{prop:finite-stacking}. Whether infinite loss can hold on a set of periods of positive measure is also open, and by Remark~\ref{rem:ell-sharpness} it is not reachable by our present constructions. Finally, the quantitative decay theory for the heat-wave system when the geometric conditions fail, remains, in general, open.  Though Corollary~\ref{cor:ellipse-family} settles it negatively for the specific geometries considered here.

\appendix

\section{Proof of Proposition~\ref{prop:ell-branch}}

We give a self-contained proof: separation of variables in elliptic coordinates,
existence and quantitative localization (an Agmon estimate) of the ground state
of the resulting angular eigenproblem, and existence of an unbounded family of
quantized wavenumbers.

\subsection{Elliptic coordinates}\label{ssec:setup}
We consider here an ellipse $E$ given by $\frac{x^2}{a^2}+\frac{y^2}{b^2}\leq 1$, with $a>b>0$.

Set $c:=\sqrt{a^2-b^2}>0$ as the stretching radius. Then the elliptic coordinates
$(\xi,\eta)\in[0,\xi_0]\times[0,2\pi)$ are defined by
\begin{equation}\label{eq:ellipt-coord}
 x=c\cosh\xi\cos\eta,\qquad y=c\sinh\xi\sin\eta,
\end{equation}
where $\xi_0$ is determined by $c\cosh\xi_0=a$, i.e.\ $\cosh\xi_0=a/c$. This implies that 
\[
\sinh^2\xi_0=\cosh^2\xi_0-1=\frac{a^2-c^2}{c^2}={\frac{b^2}{c^2}}.
\]
Consequently $\sinh\xi_0=b/c$ and $c\sinh\xi_0=b$. 
Hence they cover $E$, sending
$\{\xi=\xi_0\}$ to $\partial E$. For a fixed arc coordinate $\eta$ the coordinate $\xi$ monotonously parametrizes a confocal hyperbola arc between $(c\cos\eta,0)$ and $(a\cos \eta, b\sin \eta)$. 
The curves parametrized by $\eta$ and $\xi$ are orthogonal and have the same speed of motion. Indeed:
\begin{align}
\label{eq:coord}
\begin{aligned}
&\frac{\partial x}{\partial \xi}(\xi,\eta)= c\sinh(\xi)\cos(\eta)=:\alpha(\xi,\eta),
\, \frac{\partial y}{\partial \xi}(\xi,\eta)= c\cosh(\xi)\sin(\eta)=:\beta(\xi,\eta)
\\
& \frac{\partial x}{\partial \eta}(\xi,\eta)= -c\cosh(\xi)\sin(\eta)=-\beta(\xi,\eta)
\, \frac{\partial y}{\partial \eta}(\xi,\eta)= c\sinh(\xi)\cos(\eta)=\alpha(\xi,\eta)
\end{aligned}
\end{align}
Now 
\[
\abs{\partial_\xi(x,y)}^2=c^2 (\sinh^2\xi\cos^2\eta+\cosh^2\xi \sin^2\eta)=c^2 (\cosh^2\xi-\cos^2\eta)=\alpha^2(\xi,\eta)+\beta^2(\xi,\eta)=\abs{\partial_\eta(x,y)}^2=:h^2
\]
 and 
$\partial_\xi(x,y)=\partial_\eta(x,y)^\perp$.
Resulting that the coordinate change is a stretched rotation. Introducing the rotational matrix 
\[
Q=\frac{1}{h}\begin{pmatrix}
\alpha & \beta
\\
-\beta &  \alpha
\end{pmatrix}
\]
implies that for any function $f\in C^2(E)$
\begin{align}
\label{eq:laplace}
(\mathrm{tr}(\nabla^2f(x(\xi,\eta),y(\xi,\eta))=\mathrm{tr}(Q^T \nabla^2f(x(\xi,\eta),y(\xi,\eta)) Q)=
\frac{1}{h^2}(\partial_\xi^2 \tilde{f}(\xi,\eta)+\partial_\eta^2\tilde{f}(\xi,\eta)),
\end{align}
where $\tilde{f}(\xi,\eta):=f(x(\xi,\eta),y(\xi,\eta))$.
Hence
$\Delta f (x(\xi,\eta),y(\xi,\eta))=\frac{1}{h^2}\Delta \tilde{f}(\xi,\eta)$.
 Please note that such a coordinate change would not be possible in the circular case, where $c=0$.

\subsection{Separation of variables}
For this setting we wish to create a family of eigenvalues.
\[
-\Delta u=\kappa^2 u\text{ in }E,\quad u=0\text{ on }\partial E.
\]
We construct a sequence of eigenvalues $\kappa_n^2$, $n\ge N$, with $\kappa_n=n\pi/b+O(1)$. 

 For that we use the separated Ansatz $\tilde{u}(\xi,\eta)=F(\xi)G(\eta)$ into
$-\Delta \tilde{u}=\kappa^2h^2\tilde{u}$.
Dividing by $FG$, of which we assume that it is not $0$ we find using the definition of $h$ that 
\[
 -\frac{F''(\xi)}{F(\xi)}-\frac{G''(\eta)}{G(\eta)}=\kappa^2c^2(\cosh^2\xi-\cos^2\eta),
\]
which implies that there is a constant $\Lambda\in \R$, such that
\begin{equation}\label{eq:separated}
 L_\kappa G(\eta)=\Lambda G(\eta),\qquad L_\kappa G(\eta):=-G''(\eta)+\kappa^2c^2\cos^2\eta G(\eta),
\end{equation}
and
\begin{equation}\label{eq:Lk-def}
 F''(\xi)+\bigl(\kappa^2c^2\cosh^2\xi-\Lambda\bigr)F(\xi)=0.
\end{equation}
Both  systems become ordinary differential equations, with different boundary values. 

First $G$ needs to be periodic $G(\eta+2\pi)=G(\eta)$.
 
For $F$ we  first need to require $F(\xi_0)=0$, which are the zero boundary values of $u$ on $\partial E$, second we require that $F'(0)
 =0$: the points $(0,\eta)$ and $(0,-\eta)$ have the same image (the focal segment), and, $G$ being even, $F'(0)=0$ is exactly what makes $F(\xi)G(\eta)$ smooth across the focal segment (see Section~\ref{ssec:smooth-ext}); otherwise the normal derivative would jump there and $FG$ would not solve the eigenvalue equation across the segment.
 Now if solutions for these boundary conditions where unique for all $\kappa,\Lambda$ it would not be possible to get an eigenfunction using the seperation Ansatz, as $F\equiv 0$. This is of course not true. Nevertheless one has to construct $\kappa$ and $\Lambda$ in a very careful manner to produce a non-trivial eigenfunction. 

The following steps are performed:

\begin{enumerate}
\item For all $\kappa\in \R$, we show the existence of a  $\Lambda(\kappa)$ the smallest eigenvalue of $L_\kappa$ with positive and $2\pi$ periodic eigenfucntion $G_\kappa(\eta)$.
This so-called ground state will indeed with increasing $\kappa$ concentrate around the $y$-axes and have an exponential decay in radial direction.
\item We then consider for all $\kappa,\Lambda_\kappa$ the solution $F_\kappa$ to the {\em initial value problem}
\begin{align}
\label{eq:initial}
F''_\kappa (\xi)+\bigl(\kappa^2c^2\cosh^2\xi-\Lambda_\kappa\bigr)F_\kappa(\xi)=0, \quad F_\kappa(0)=1,\quad F_\kappa'(0)=0.
\end{align}
\item We then deduce the existence of an increasing sequence $\kappa_n=n\pi/b+O(1)$ such that $F_{\kappa_n}(\xi_0)=0$.
\item Finally we show that the resulting separated function $F(\xi)G(\eta)$ then
defines a genuine element of $H_0^1(E)\cap C^\infty(\overline{E})$ satisfying {$-\Delta u=\kappa^2 u$} in $E$, and we prove the lower and localization bounds.
\end{enumerate} 

\subsection{Step 1: The angular ground state}\label{ssec:ground-state}

\begin{lemma}\label{lem:ground-state}
For every $\kappa>0$ the smallest eigenvalue $\Lambda_\kappa>0$ is simple, with a real, $2\pi$-periodic, strictly
positive eigenfunction $G_\kappa\in C^\infty(\T_{2\pi})$, normalized by
$\norm{G_\kappa}_{L^2(\T_{2\pi})}=1$, that satisfies both geometric axial symmetries, namely $G(-\eta)=G(\eta)=G(\eta+\pi)=G(\pi-\eta)$.
\end{lemma}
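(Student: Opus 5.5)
We treat $L_\kappa=-\partial_\eta^2+\kappa^2c^2\cos^2\eta$ as a self-adjoint Schrödinger operator on $L^2(\T_{2\pi})$, with form domain $H^1(\T_{2\pi})$, and then apply the standard ground-state argument. The potential $q(\eta):=\kappa^2c^2\cos^2\eta$ is smooth, bounded and nonnegative. Hence the quadratic form
\[
Q[G]=\int_0^{2\pi}\bigl(\abs{G'}^2+q\abs{G}^2\bigr)\,d\eta
\]
is closed and bounded below on $H^1(\T_{2\pi})$. Since $H^1(\T_{2\pi})$ embeds compactly into $L^2(\T_{2\pi})$, $L_\kappa$ has compact resolvent and purely discrete spectrum. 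We therefore define $\Lambda_\kappa:=\min\{Q[G]:\norm{G}_{L^2}=1\}$; the minimum is attained by the direct method, because minimizing sequences are bounded in $H^1$.

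Positivity of $\Lambda_\kappa$ is immediate. We have $Q[G]\ge0$, and $Q[G]=0$ would force both $G'\equiv0$ and $qG^2\equiv0$. But then $G$ is constant and $q$ vanishes only at $\eta=\pm\pi/2$, so $G\equiv0$, a contradiction. Reality also comes for free: $q$ is real, so the real and imaginary parts of a minimizer are again minimizers, and we may take the minimizer real. Because $Q[\abs G]=Q[G]$ (using $\abs{\nabla\abs G}=\abs{\nabla G}$ a.e.), we may further take $G\ge0$. Any minimizer solves the Euler--Lagrange equation $-G''+qG=\Lambda_\kappa G$ weakly, and bootstrapping gives $G\in C^\infty(\T_{2\pi})$.

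For strict positivity, suppose a nonnegative minimizer had a zero $\eta_0$. Then $\eta_0$ is a minimum, so $G(\eta_0)=G'(\eta_0)=0$, and ODE uniqueness gives $G\equiv0$. So $G>0$.

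Simplicity is the main point. If the eigenspace for $\Lambda_\kappa$ had dimension at least two, we could pick two real orthogonal eigenfunctions. Both are minimizers of $Q$ on the unit sphere, so by the previous paragraph each has strictly constant sign. Two functions of strictly constant sign cannot be $L^2$-orthogonal, a contradiction. (Alternatively, a linear combination vanishing at a point would be a minimizer with a zero.) Hence $\Lambda_\kappa$ is simple, and we normalize $G_\kappa>0$ with $\norm{G_\kappa}_{L^2}=1$.

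The symmetries come from simplicity. The potential satisfies $q(-\eta)=q(\eta)=q(\eta+\pi)=q(\pi-\eta)$, since $\cos^2$ is invariant under each of these maps. So each map $\eta\mapsto-\eta$, $\eta\mapsto\eta+\pi$, $\eta\mapsto\pi-\eta$ induces a unitary operator on $L^2(\T_{2\pi})$ that commutes with $L_\kappa$ and sends $G_\kappa$ to an eigenfunction for $\Lambda_\kappa$. By simplicity, each image equals $\pm G_\kappa$. Positivity of both $G_\kappa$ and its image excludes the minus sign, which yields the stated invariances.

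The only step needing care is the strict-positivity and simplicity argument, specifically justifying $Q[\abs G]=Q[G]$ for $G\in H^1$. This is the standard fact that $\abs G\in H^1$ with $\abs G'=\operatorname{sgn}(G)G'$ a.e., so we expect no real obstacle.
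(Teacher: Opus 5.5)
Your proposal is correct and follows essentially the same route as the paper: variational characterization of the ground state, the $\abs{G}$ trick plus ODE uniqueness for strict positivity, simplicity because two constant-sign eigenfunctions cannot be $L^2$-orthogonal, and the symmetries from simplicity of the eigenvalue. Your version is slightly more careful at two points the paper passes over quickly. First, you justify $\Lambda_\kappa>0$ using that $\cos^2\eta$ vanishes only at two points; nonnegativity of $V_\kappa$ alone, as the paper states, does not exclude a constant eigenfunction with eigenvalue zero. Second, you explicitly rule out the sign $-1$ in the symmetry step by positivity of $G_\kappa$.
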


\begin{proof}
\emph{Existence and positivity.} The operator $L_\kappa$ with domain $H^2(\T_{2\pi})$ is a self-adjoint operator on $L^2(\T_{2\pi})$.  It is injective, since $V_\kappa\ge0$. Now $L_\kappa^{-1}:L^2(\T_{2\pi})\to L^2(\T_{2\pi})$ is self-adjoint, positive and compact. Hence the spectrum of $L_\kappa$ is discrete, real, strictly positive and the eigenfunctions provide a smooth orthonormal basis of $L^2(\T_{2\pi})$.
We introduce
\[
V_\kappa(\eta):=\kappa^2c^2\cos^2\eta
\]
Hence the variational
characterization of the first eigenvalue
\[
 \Lambda_\kappa=\inf_{\phi\in H^1(\T_{2\pi})\setminus\{0\}}
 \frac{\int_{\T_{2\pi}}((\phi')^2+V_\kappa\phi^2)\,d\eta}{\int_{\T_{2\pi}}\phi^2\,d\eta}
\]
is attained by some $G_\kappa\in C^\infty(\T_{2\pi})$ with $\norm{G_\kappa}_{L^2(\T_{2\pi})}=1$.
The Euler--Lagrange equation gives {$L_\kappa G_\kappa=\Lambda_\kappa G_\kappa$}.

Since $V_\kappa \phi^2$ depends only on $|\phi|$ and the weak derivative of $|\phi|$
satisfies $|\,|\phi|'\,|\le|\phi'|$ a.e.\ (Stampacchia's truncation),
$|G_\kappa|$ attains the same Rayleigh quotient $\Lambda_\kappa$ and is therefore
also a (smooth) eigenfunction. Hence we can choose
$G_\kappa \ge 0$. Further if $G_\kappa(\eta^*)=0$ for some $\eta^*\in\T_{2\pi}$, then $\eta^*$ is
a minimum forcing $G_\kappa'(\eta^*)=0$. Now the uniqueness of the initial value problem 
$-G_\kappa''+(V_\kappa-\Lambda_\kappa)G_\kappa=0$ with the initial data $G_\kappa(\eta^*)=G_\kappa'(\eta^*)=0$ would imply $G_\kappa\equiv 0$ contradicting $\norm{G_\kappa}_{L^2}=1$. Hence $G_\kappa>0$ on $\T_{2\pi}$.

\emph{Simplicity.} Suppose that there was another $G_1\in H^1(\T_{2\pi})$ an eigenfunction with the
same eigenvalue $\Lambda_\kappa$ with
$\int G_0G_1\, d\eta =0$. By the same argument as before one would find that also  $|G_1|$ is a (smooth)
eigenfunction with eigenvalue $\Lambda_\kappa$, with $|G_1|>0$ on $\T_{2\pi}$. But this implies that $G_1$ has a constant sign, which implies that $\int G_0G_1\, d\eta\ne 0$, hence the eigenspace is one-dimensional.

\emph{Symmetries.} Observe that $V_k(-\eta)=V_k(\eta)=V_k(\eta+\pi)=V_k(\pi-\eta)$. Hence $G_\kappa(-\eta)$, $G_\kappa(\pi+\eta)$ and $G_\kappa(\pi-\eta)$ have the same Rayleigh quotient. The uniqueness of $G_\kappa$ implies that it satisfies the symmetries.
\end{proof}

\begin{lemma}[Boundedness]\label{lem:bounded}
There is an $M>0$ such that
\begin{equation}\label{eq:Lambda0-bound}
 0<\Lambda_\kappa\le M\kappa\qquad\text{for all }\kappa\geq 1.
\end{equation}
\end{lemma}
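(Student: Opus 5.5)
Both inequalities come from the variational characterization of $\Lambda_\kappa$ in the proof of Lemma~\ref{lem:ground-state}. Positivity is already contained in that lemma. Indeed, $V_\kappa\ge0$, and $V_\kappa$ is not identically zero when $\kappa>0$. The Rayleigh quotient is therefore nonnegative, and it can vanish only if $\phi'\equiv0$ and $V_\kappa\phi^2\equiv0$. That would force $\phi$ to be a constant vanishing away from $\{\cos\eta=0\}$, hence $\phi\equiv0$. So $\Lambda_\kappa>0$.

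For the upper bound $\Lambda_\kappa\le M\kappa$, I insert a harmonic-oscillator trial function localized at a zero of the potential, say $\eta=\pi/2$. Near that point $\cos^2\eta=\sin^2(\eta-\pi/2)\le(\eta-\pi/2)^2$, so
\[
V_\kappa(\eta)\le \kappa^2c^2(\eta-\pi/2)^2 .
\]
This is the potential of a harmonic oscillator with frequency $\kappa c$. Its ground state is a Gaussian of width $(\kappa c)^{-1/2}$ with energy $\kappa c$. I take
\[
\phi(\eta)=\chi(\eta-\pi/2)\,e^{-\kappa c(\eta-\pi/2)^2/2},
\]
where $\chi$ is a smooth cutoff supported in $(-\pi/2,\pi/2)$ and equal to $1$ on $[-\pi/4,\pi/4]$. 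Then $\phi$ is a legitimate element of $H^1(\T_{2\pi})$. The cutoff is not needed for the symmetries, because $\Lambda_\kappa$ is the global infimum.

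Next I estimate the Rayleigh quotient, writing $s=\eta-\pi/2$. Without the cutoff, the Gaussian integrals on $\R$ give
\[
\int(\phi')^2=\frac{\kappa c}{2}\int\phi^2 \quad\text{and}\quad \int \kappa^2c^2s^2\phi^2=\frac{\kappa c}{2}\int\phi^2,
\]
so the quotient is at most $\kappa c$. The cutoff adds errors of size $O(e^{-\kappa c\pi^2/16})$ times polynomial factors in $\kappa$, coming from $\chi'$ and from the tails. The denominator satisfies $\int\phi^2\ge\int_{|s|\le\pi/4}e^{-\kappa c s^2}\,ds\ge c'\kappa^{-1/2}$ for $\kappa\ge1$. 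Hence
\[
\Lambda_\kappa\le\kappa c+C e^{-c''\kappa}\,\mathrm{poly}(\kappa)\le M\kappa \quad\text{for }\kappa\ge1 .
\]

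The main obstacle is to keep these cutoff and tail errors uniform in $\kappa\ge1$. There is a cruder route that avoids them: take the piecewise-linear tent function $\phi$ supported on $|s|\le\kappa^{-1/2}$. Then $\int(\phi')^2\sim\kappa^{1/2}$, $\int V_\kappa\phi^2\lesssim\kappa^2\cdot\kappa^{-1}\cdot\kappa^{-1/2}=\kappa^{1/2}$, and $\int\phi^2\sim\kappa^{-1/2}$. The quotient is then $\lesssim\kappa$ with no exponential bookkeeping at all, so I would likely present this version.
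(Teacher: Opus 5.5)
Your proof is correct and follows the paper's route. The paper also bounds $\Lambda_\kappa$ by the Rayleigh quotient of a cut-off Gaussian of width $\sim\kappa^{-1/2}$ centred at the wells $\eta=\pi/2,3\pi/2$ (it uses both wells, which, as you note, is unnecessary), and it takes positivity from Lemma~\ref{lem:ground-state}, where your argument usefully makes explicit that $V_\kappa\not\equiv0$ and that the infimum is attained by $G_\kappa$. Your tent-function variant is a sound simplification: it gives $\Lambda_\kappa\le(3+c^2)\kappa$ directly and replaces the cutoff bookkeeping that the paper leaves as ``easy to check''.
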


\begin{proof} The situation is as follows $V_\kappa(\eta)=k^2c^2\cos^2\eta$ has wells at $\eta=\pi/2$ and $\eta=3\pi/2$
(where it vanishes) and maxima at $\kappa^2c^2$ at $\eta=0$ and $\eta=\pi$.

\begin{tikzpicture}
\begin{axis}[
    width=11cm,
    height=7.3cm,
    domain=0:2*pi,
    samples=300,
    axis lines=left,
    xlabel={$\eta$},
    ylabel={$4\cos^2\eta$},
    xmin=0, xmax=2*pi,
    ymin=0, ymax=4.2,
    xtick={0,pi/2,pi,3*pi/2,2*pi},
    xticklabels={$0$,$\frac{\pi}{2}$,$\pi$,$\frac{3\pi}{2}$,$2\pi$},
    ytick={0, 4},
    tick style={black},
]

\addplot[thick, blue] {4*cos(deg(x))^2};

\end{axis}
\end{tikzpicture}

Consequently, with increasing $\kappa$ the minimizer of ther Rayleigh quotient will concentrate around $\eta=\pi/2$ (and $\eta=3\pi/2$) {\em in such a way that its derivative will be of comparable size.}

\begin{tikzpicture}
\begin{axis}[
    width=12cm,
    height=8cm,
    domain=-pi/2:pi/2,
    samples=200,
    xmin=-pi/2, xmax=pi/2,
    ymin=0, ymax=2.6,
    axis lines=middle,
    xlabel={$x$},
    ylabel={$y$},
    xtick={-pi/2,0,pi/2},
    xticklabels={$-\frac{\pi}{2}$,$0$,$\frac{\pi}{2}$},
    legend style={at={(0.98,0.98)},anchor=north east},
    grid=both,
]

\addplot[
    blue,
    thick,
]
{x^2};
\addlegendentry{$x^2$}

\addplot[
    red,
    thick,
]
{exp(-x^2)};
\addlegendentry{$e^{-x^2}$}

\end{axis}
\end{tikzpicture}

Using  $\cos(\pi/2+\theta)\sim \theta$ we can construct an explicit competitor that provides an upper bound of $G_\kappa$. In fact it actually predicts already the essence of $G_\kappa$. 
First observe that 
\[
\partial_\theta e^{-\gamma\theta^2}=-2\gamma\theta e^{-\theta^2\gamma}\text{ and }
\partial_\theta^2 e^{-\theta^2\gamma}=-2\gamma e^{-\theta^2\gamma}+4\theta^2\gamma^2e^{-\theta^2\gamma} 
\]
and so
\[
\int c^2\kappa^2 \cos^2(\frac{\pi}{2}+\theta)(e^{-\theta^2c\kappa})^2\sim\int (\partial_\theta e^{-\theta^2c\kappa})^2\, d\eta=
\int 4\theta^2c^2\kappa^2 e^{-2\theta^2c\kappa}\, d\eta
\sim \sqrt{\kappa}
\]
Where we used that (for any $\delta$)
\[
\int^\delta_{-\delta} \theta^2 e^{-a\theta^2}\, d\theta
=\frac{1}{a \sqrt{a}}\int^{\frac{\delta}{\sqrt{a}}}_{-\frac{\delta}{\sqrt{a}}}z^2e^{-z^2}\, dz= \frac{1}{a \sqrt{a}}\int^{\frac{\delta}{\sqrt{a}}}_{-\frac{\delta}{\sqrt{a}}}-\partial_z \frac{ze^{-z^2}}{2} + \frac{e^{-z^2}}{2}\, dz
\leq \frac{\sqrt{\pi}}{2a \sqrt{a}}.
\]
Now as $ \int (e^{-c\kappa \theta^2})^2\, dx= \frac{\sqrt{\pi}}{\sqrt{2c\kappa}}$, we find that the competitor is essentially of the form $\kappa^\frac{1}{4}e^{-c\kappa \theta^2}$ and
\[
\Lambda_\kappa\leq M \kappa.
\]
 This can be made explicit by using cut-off functions. For that choose
$\chi\in C_c^\infty(\R)$ even with $\chi=1$ on $[-1/4,1/4]$,
$\supp\chi\subset(-1/2,1/2)$, and $0\le\chi\le 1$ and setting
\[
 \psi_k(\eta):=\chi(\eta-\pi/2)\,e^{-(kc/2)(\eta-\pi/2)^2}
 +\chi(\eta-3\pi/2)\,e^{-(kc/2)(\eta-3\pi/2)^2}.
\]
It is easy to check that the above calculations applied to this function result in a valid explicit competitor with Rayleigh quotient of size $\kappa$.
\end{proof}
The competitor $\psi_\kappa$ already predicts the essence of $G_\kappa$: with increasing $\kappa$ the ground state concentrates in Gaussian fashion, on a scale $\kappa^{-1/2}$, around the wells $\eta=\pi/2$ and $\eta=3\pi/2$ of the potential $V_\kappa(\eta)=\kappa^2c^2\cos^2\eta$, which vanishes there and has maxima $\kappa^2c^2$ at $\eta=0,\pi$. This is made quantitative in the next subsection.

\subsection{Agmon localization of the ground state}\label{ssec:agmon}
Let \[
 S_\delta:=\bigl\{\eta\in[0,2\pi):\,|\eta-\pi/2|>\delta\text{ and }|\eta-3\pi/2|>\delta\bigr\},
\]
\begin{lemma}\label{lem:agmon}
There are constants $C, c_1, c_2>0$, depending only on $c$, such that for all $\delta\in (0,\frac{\pi}{4}]$ and all $\kappa\geq \frac{c_2}{\delta^2}$,
\begin{equation}\label{eq:agmon-bound}
 \int_{S_\delta}|G_\kappa|^2\,d\eta
 \le \frac{C}{\delta^2\kappa}\,e^{-c_1\delta^2 \kappa}\qquad\text{ and }\qquad
\int_{S_\delta}|G_\kappa'|^2\,d\eta
 \le C\kappa\,e^{-c_1\delta^2 \kappa}.
\end{equation}
\end{lemma}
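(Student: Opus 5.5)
We will prove Lemma~\ref{lem:agmon} by a standard Agmon-type weighted energy identity for the angular operator $L_\kappa$. We work on the torus and use the ground-state equation $-G_\kappa''+(V_\kappa-\Lambda_\kappa)G_\kappa=0$ together with the bound $\Lambda_\kappa\le M\kappa$ of Lemma~\ref{lem:bounded}. The key observation is that on $S_{\delta/2}$ the potential dominates. Indeed, $\cos^2\eta\ge\sin^2(\delta/2)\ge \delta^2/\pi^2$ there, so
\[
V_\kappa-\Lambda_\kappa\ge \kappa^2c^2\delta^2/\pi^2-M\kappa\ge \tfrac12\kappa^2c^2\delta^2/\pi^2 .
\]
This lower bound holds as soon as $\kappa\ge c_2/\delta^2$ with $c_2:=2\pi^2M/c^2$. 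This is where the threshold on $\kappa$ comes from.

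\emph{Step 1 (weighted identity).} We fix a Lipschitz weight $\Phi$ on $\T_{2\pi}$, respecting the symmetries of $V_\kappa$. We multiply the equation by $e^{2\Phi}G_\kappa$ and integrate by parts. With $\psi:=e^{\Phi}G_\kappa$ this gives the classical Agmon identity
\[
\int(\psi')^2+\int\bigl(V_\kappa-\Lambda_\kappa-(\Phi')^2\bigr)\psi^2=0 .
\]
We take $\Phi$ vanishing on the $\delta/2$-neighbourhoods of $\pi/2,3\pi/2$. Away from these neighbourhoods we let $\Phi$ grow linearly with slope $\gamma:=\kappa c\delta/(2\pi)$, capped so that $\Phi$ is constant on $S_\delta$ with value $\Phi_{\max}\simeq\gamma\delta/2\simeq c\delta^2\kappa/(4\pi)$. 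Then $(\Phi')^2\le\frac14\kappa^2c^2\delta^2/\pi^2$, so $V_\kappa-\Lambda_\kappa-(\Phi')^2\ge\frac14\kappa^2c^2\delta^2/\pi^2$ on $S_{\delta/2}$.

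\emph{Step 2 (splitting).} We split the integral into $S_{\delta/2}$ and its complement $N$. On $N$ we have $\Phi=0$ and $V_\kappa-\Lambda_\kappa-(\Phi')^2\ge-\Lambda_\kappa\ge -M\kappa$, so that region contributes at most $M\kappa\norm{G_\kappa}^2=M\kappa$ with a negative sign. Hence
\[
\tfrac14\kappa^2c^2\delta^2\pi^{-2}\int_{S_{\delta/2}}\psi^2\le M\kappa .
\]
On $S_\delta$ we have $\psi^2=e^{2\Phi_{\max}}G_\kappa^2$, which yields
\[
\int_{S_\delta}G_\kappa^2\le \frac{C}{\delta^2\kappa}e^{-c_1\delta^2\kappa},
\]
the first inequality.

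\emph{Step 3 (derivative bound).} The same identity also controls $\int(\psi')^2\le M\kappa$. On $S_\delta$ we have $\psi'=e^{\Phi_{\max}}G_\kappa'$, so this gives $\int_{S_\delta}(G_\kappa')^2\le M\kappa e^{-2\Phi_{\max}}$, the second inequality. We note that $S_\delta$ lies in the interior of the region where $\Phi$ is constant, so no boundary terms arise.

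The main obstacle is bookkeeping of constants. The cap on $\Phi$ must be chosen so that $(\Phi')^2$ stays below half the potential gap uniformly for $\delta\le\pi/4$. We must also check that the linear growth over an interval of length $\delta/2$ actually reaches $\Phi_{\max}\gtrsim\delta^2\kappa$, so that $c_1$ depends only on $c$. Finally, the transition points of $\Phi$ should be placed symmetrically around both wells, and one should verify that the integration by parts is legitimate for Lipschitz $\Phi$ and smooth $G_\kappa$, which it is.
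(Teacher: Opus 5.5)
Your proposal is correct and follows the paper's proof: the same weight (zero on the $\delta/2$-neighbourhoods of the wells, slope $c\kappa\delta/(2\pi)$ on the transition layer, constant value $c\delta^2\kappa/(4\pi)$ on $S_\delta$, hence $c_1=c/(2\pi)$), the same potential lower bound, and the same use of $\Lambda_\kappa\le M\kappa$ on the wells. The only difference is cosmetic: you use the exact conjugated identity $\int(\psi')^2+\int(V_\kappa-\Lambda_\kappa-(\Phi')^2)\psi^2=0$, whereas the paper absorbs the cross term $2\varphi' G_\kappa G_\kappa'$ by Young's inequality, at the harmless cost of a factor $2$ on $(\varphi')^2$ and a factor $\tfrac12$ on the derivative term.
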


\begin{proof}
Fix $\delta\in(0,\pi/4]$ and recall $0<\Lambda_\kappa \le M\kappa$ (Lemma~\ref{lem:bounded}) and $\norm{G_\kappa}_{L^2(\T_{2\pi})}=1$.

Let $W_\delta:=\{|\eta-\pi/2|\le\delta/2\}\cup\{|\eta-3\pi/2|\le\delta/2\}$ and $T_\delta :=\{\delta/2<|\eta-\pi/2|<\delta\}\cup\{\delta/2<|\eta-3\pi/2|<\delta\}$, so that, up to a null set, $\T_{2\pi}=W_\delta\cup T_\delta\cup S_\delta$. Let $\varphi\ge0$ be the $2\pi$-periodic Lipschitz function with
\[
 \varphi=0\text{ on }W_\delta,\qquad
 \varphi=\alpha \kappa\text{ on }S_\delta,\qquad
\abs{\varphi'}= \frac{2\alpha \kappa}{\delta}\chi_{T_\delta}\ \text{(i.e.\ its linear on each component of $T_\delta$)},
\]
where $\alpha>0$ will be fixed in \eqref{eq:alpha-choice} below. Multiply $L_\kappa G_\kappa=\Lambda_\kappa G_\kappa$ by $e^{2\varphi}G_\kappa$ and
integrate; periodicity kills all boundary terms in the integration by
parts, and we find
\[
 \int_{\T_{2\pi}}e^{2\varphi}(G_\kappa')^2\,d\eta
 +2\int_{\T_{2\pi}}e^{2\varphi}\varphi'G_\kappa G'_\kappa\,d\eta
 +\int_{\T_{2\pi}}(V_\kappa-\Lambda_\kappa)e^{2\varphi}G_\kappa^2\,d\eta=0.
\]
Using Young's inequality, namely $2\varphi'GG'\ge-\frac{(G')^2}{2}-2(\varphi')^2G^2$, we obtain
\begin{equation}\label{eq:agmon-master}
 \int_{\T_{2\pi}}\frac{e^{2\varphi}}{2}(G_\kappa')^2\,d\eta+\int_{\T_{2\pi}}\bigl(V_\kappa-\Lambda_\kappa-2(\varphi')^2\bigr)e^{2\varphi}G_\kappa^2\,d\eta\le 0.
\end{equation}
Let $d(\eta)\in[0,\pi/2]$ denote the distance from $\eta$ to $\{\pi/2,3\pi/2\}$ on $\T_{2\pi}$. Then $\abs{\cos\eta}=\sin d(\eta)\ge2d(\eta)/\pi$, and since $d>\delta/2$ on $W_\delta^c$,
\begin{equation}\label{eq:V-lower}
 V_\kappa(\eta)\ge  \tilde{c}_1 \kappa^2c^2\delta^2\quad\text{on }W_\delta^{c}=T_\delta\cup S_\delta,\qquad \tilde c_1:=\pi^{-2}.
\end{equation}
Choose
\begin{equation}\label{eq:alpha-choice}
 \alpha:=\frac{c\sqrt{\tilde{c}_1}}{4}\,\delta^2 .
\end{equation}
Then on $T_\delta$ we have $2(\varphi')^2=\frac{8\alpha^2\kappa^2}{\delta^2}=\frac{\tilde c_1}{2}\kappa^2c^2\delta^2\le\frac{V_\kappa}{2}$. Put $c_2:=4M/(\tilde c_1c^2)$; for $\kappa\ge c_2/\delta^2$ we have $\Lambda_\kappa\le M\kappa\le\frac{\tilde c_1}{4}\kappa^2c^2\delta^2\le\frac{V_\kappa}{4}$ on $W_\delta^c$, hence
\[
 V_\kappa-\Lambda_\kappa-2(\varphi')^2\ge
 \begin{cases}
 -\Lambda_\kappa  & \text{on }W_\delta,\\
 0 & \text{on }T_\delta,\\
 \tfrac{3\tilde{c}_1}{4} \kappa^2c^2\delta^2 & \text{on }S_\delta .
 \end{cases}
\]
Discarding the nonnegative contribution of $T_\delta$ in \eqref{eq:agmon-master}, and using $\varphi=0$ on $W_\delta$ and $\varphi=\alpha\kappa$ on $S_\delta$,
\[
 e^{2\alpha \kappa}\Bigl(\frac{1}{2}\int_{S_\delta}\abs{G_\kappa'}^2\,d\eta+\frac{3\tilde c_1}{4}\kappa^2c^2\delta^2\int_{S_\delta}G_\kappa^2\,d\eta\Bigr)
 \le\Lambda_\kappa\int_{W_\delta}G_\kappa^2\,d\eta\le M\kappa .
\]
With $c_1:=2\alpha/\delta^2=c\sqrt{\tilde c_1}/2$ and $C:=\max\{2M,\,4M/(3\tilde c_1c^2)\}$ this is \eqref{eq:agmon-bound}.
\end{proof}
\subsection{The "radial equation"}\label{ssec:radial}
Next we analyze the following family of initial value problems:
\begin{align}
\label{eq:ivp}
\begin{aligned}
-F''_\kappa(\xi)=(\kappa^2c^2\cosh^2\xi-\Lambda_\kappa)F_\kappa=:R_\kappa(\xi)F_\kappa 
\\
F_\kappa(0)=1,\quad F'_\kappa=0
\end{aligned}
\end{align}
The following lemma shows the continuity of $R_\kappa$ in dependence of $\kappa$, which directly implies the continuity of $F_\kappa$ in dependence of $\kappa$ (and $\xi$), by standard ODE theory.
\begin{lemma}
We find that $R_\kappa(\xi)$ is continuous and positive for all 
$(\xi,\kappa)\in[0,\infty)\times[\frac{2M}{c^2},\infty)$.
\end{lemma}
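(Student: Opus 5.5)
The claim has two parts. Continuity is immediate, and positivity is a lower bound on $\kappa^2c^2\cosh^2\xi$ played against the upper bound $\Lambda_\kappa\le M\kappa$ of Lemma~\ref{lem:bounded}, valid for $\kappa\ge1$.

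\emph{Continuity.} I would first show that $\kappa\mapsto\Lambda_\kappa$ is continuous. The cleanest route is the min--max (Rayleigh quotient) characterization from Lemma~\ref{lem:ground-state}. For $\kappa,\kappa'>0$ the potentials satisfy
\[
\abs{V_\kappa-V_{\kappa'}}\le c^2\abs{\kappa^2-\kappa'^2}
\]
uniformly in $\eta$. Hence the Rayleigh quotients of any fixed trial function differ by at most $c^2\abs{\kappa^2-\kappa'^2}$, and taking infima gives
\[
\abs{\Lambda_\kappa-\Lambda_{\kappa'}}\le c^2\abs{\kappa^2-\kappa'^2}.
\]
So $\Lambda_\kappa$ is even locally Lipschitz in $\kappa$. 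Since $(\xi,\kappa)\mapsto\kappa^2c^2\cosh^2\xi$ is continuous, $R_\kappa(\xi)=\kappa^2c^2\cosh^2\xi-\Lambda_\kappa$ is jointly continuous on $[0,\infty)\times(0,\infty)$.

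\emph{Positivity.} Using $\cosh\xi\ge1$, we get $R_\kappa(\xi)\ge\kappa^2c^2-\Lambda_\kappa$. Note that $2M/c^2$ need not be at least $1$, so Lemma~\ref{lem:bounded} (stated for $\kappa\ge1$) does not cover the whole range directly; I would handle this with a $\kappa$-independent bound.

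\emph{Main obstacle.} The issue is that Lemma~\ref{lem:bounded} is only asserted for $\kappa\ge1$, while we need $\Lambda_\kappa<\kappa^2c^2$ for all $\kappa\ge 2M/c^2$. I would prove the sharper comparison
\[
\Lambda_\kappa<\kappa^2c^2\quad\text{for every }\kappa>0
\]
directly: take the constant trial function $\phi\equiv1$ in the Rayleigh quotient. This gives
\[
\Lambda_\kappa\le\frac{1}{2\pi}\int_0^{2\pi}\kappa^2c^2\cos^2\eta\,d\eta=\frac{\kappa^2c^2}{2}.
\]
Therefore
\[
R_\kappa(\xi)\ge\kappa^2c^2-\tfrac12\kappa^2c^2=\tfrac12\kappa^2c^2>0
\]
for all $\kappa>0$, and in particular on the stated range. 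This removes any dependence on the constant $M$. If one prefers to use Lemma~\ref{lem:bounded} itself, assume without loss of generality $M\ge c^2/2$, so that $\kappa\ge 2M/c^2$ implies $\kappa\ge1$. Then
\[
\kappa^2c^2\ge 2M\kappa>M\kappa\ge\Lambda_\kappa,
\]
which also gives $R_\kappa>0$.
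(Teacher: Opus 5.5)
Your proof is correct. The continuity part is the paper's argument: the paper inserts the ground state $G_{\kappa_0}$ as a trial function in the Rayleigh quotient for $\kappa$ and gets $\abs{\Lambda_\kappa-\Lambda_{\kappa_0}}\le\norm{V_\kappa-V_{\kappa_0}}_\infty$, which is your bound $c^2\abs{\kappa^2-\kappa'^2}$.

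For positivity your main route is different and more elementary. The paper uses Lemma~\ref{lem:bounded}, $\Lambda_\kappa\le M\kappa$, so that $\kappa^2c^2-M\kappa\ge\tfrac12\kappa^2c^2$ once $\kappa\ge2M/c^2$; that is where the threshold in the statement comes from. As you correctly notice, this tacitly requires $2M/c^2\ge1$, which is harmless since $M$ may be enlarged. Your constant trial function $\phi\equiv1$ gives $\Lambda_\kappa\le\tfrac12\kappa^2c^2$, hence $R_\kappa\ge\tfrac12\kappa^2c^2$ for every $\kappa>0$. This uses neither $M$ nor any lower restriction on $\kappa$, so it proves a slightly stronger statement.

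The paper's route is not needed here, but it is needed immediately afterwards. The bound $\Lambda_\kappa=O(\kappa)$ yields \eqref{eq:assymptotics}, i.e.\ $R_\kappa=\kappa^2c^2\cosh^2\xi\,(1-O(\kappa^{-1}))$. That is what gives the control $\kappa_nb/\pi=n+O(1)$ in Lemma~\ref{lem:radial-zeros}. Your bound $\Lambda_\kappa\le\tfrac12\kappa^2c^2$ would still produce infinitely many $\kappa_n$, but not that asymptotic.
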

\begin{proof}
Since $\cosh^2\xi\ge 1$ and $\Lambda_\kappa\le M\kappa$ by
Lemma~\ref{lem:bounded},
\[
 R_\kappa(\xi)\ge \kappa^2c^2-M\kappa\ge\tfrac12 \kappa^2c^2
 \qquad\text{whenever }\kappa \ge 2M/c^2.
\]
We henceforth restrict to $k> 2M/c^2$, so that $R_\kappa>0$ uniformly on
$[0,\infty)$.

Next we show that the map $k\mapsto\Lambda_\kappa$ is continuous on
$(0,\infty)$. Recall that
\[
 \Lambda_\kappa=\inf_{\phi\in H^1(\T_{2\pi})\setminus\{0\}}
 \frac{\int_{\T_{2\pi}}\bigl((\phi')^2+V_k\phi^2\bigr)d\eta}{\int_{\T_{2\pi}}\phi^2\,d\eta}.
\]
Now denoting $J_\kappa(\phi)=\int_{\T_{2\pi}}\bigl((\phi')^2+V_k\phi^2\bigr)d\eta$, we find assuming that 
$\Lambda_{\kappa}\geq\Lambda_{\kappa_0}$, 
that
\begin{align}
\label{eq:Lambda0-Lipschitz}
\abs{\Lambda_{\kappa}-\Lambda_{\kappa_0}}=J_\kappa(G_\kappa)-J_{\kappa_0}(G_{\kappa_0})\leq J_\kappa(G_{\kappa_0})-J_{\kappa_0}(G_{\kappa_0})=\int_{\T_{2\pi}}(V_\kappa-V_{\kappa_0})G_{\kappa_0}^2\, d\theta\leq \norm{V_\kappa-V_{\kappa_0}}_\infty.
\end{align}
This implies not only the continuity but also the monotonicity of $\Lambda_\kappa$ as the above shows that $\kappa_0\leq \kappa$ implies $\Lambda_{\kappa_0}\leq \Lambda_{\kappa}$. 
This implies now directly the continuity of $R_\kappa$ and the proof is complete.
\end{proof}

\begin{lemma}\label{lem:radial-zeros}
There is an increasing sequence $\kappa_n\to\infty$,
such that the unique solution to \eqref{eq:ivp} satisfies $F_{\kappa_n}(\xi_0)=0$.
Moreover $\lim_{n\to \infty} \frac{\kappa_n}{n}=\frac{\pi}{b}$, with explicit control
\[
n-C_1 \leq \kappa_n \frac{b}{\pi } \leq n +C_2,
\]
 for all $n$ large enough, with $C_1,C_2$ depending on $M$, $c$ and $\xi_0$.
\end{lemma}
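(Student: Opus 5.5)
Since $R_\kappa(\xi)>0$ on $[0,\xi_0]$ for $\kappa\ge 2M/c^2$, I will use a Prüfer transformation for \eqref{eq:ivp}. Write $F_\kappa=\rho\sin\theta$ and $F_\kappa'=\rho\sqrt{R_\kappa}\cos\theta$, with $\theta_\kappa(0)=\pi/2$. A standard computation gives
\[
\theta'=\sqrt{R_\kappa}+\frac{R_\kappa'}{4R_\kappa}\sin 2\theta .
\]
Zeros of $F_\kappa$ are exactly the points where $\theta\in\pi\Z$, and $\theta$ increases through such points. By the preceding lemma and standard ODE continuous dependence, the map $\kappa\mapsto\theta_\kappa(\xi_0)$ is continuous. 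The eigenvalue condition $F_\kappa(\xi_0)=0$ becomes $\theta_\kappa(\xi_0)=n\pi$, and I will obtain $\kappa_n$ by the intermediate value theorem.

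The key estimate is the quantitative phase count
\[
\theta_\kappa(\xi_0)=\pi/2+\int_0^{\xi_0}\sqrt{R_\kappa(\xi)}\,d\xi+O(1),
\]
uniformly in large $\kappa$. The correction term satisfies $\bigl|R_\kappa'/(4R_\kappa)\bigr|=\kappa^2c^2\sinh 2\xi/(4R_\kappa)\le \sinh(2\xi_0)/2$, using $R_\kappa\ge\tfrac12\kappa^2c^2$. Its integral over $[0,\xi_0]$ is therefore bounded by a constant depending only on $\xi_0$.

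For the main term, $\sqrt{\kappa^2c^2\cosh^2\xi-\Lambda_\kappa}=\kappa c\cosh\xi+O(\Lambda_\kappa/(\kappa c))=\kappa c\cosh\xi+O(M/c)$ by Lemma~\ref{lem:bounded}. Integrating, $\int_0^{\xi_0}\kappa c\cosh\xi\,d\xi=\kappa c\sinh\xi_0=\kappa b$. Altogether,
\[
\bigl|\theta_\kappa(\xi_0)-\kappa b\bigr|\le K,
\]
with $K$ depending only on $M,c,\xi_0$.

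Next, for each large $n$, I evaluate at $\kappa^\pm=(n\pi\pm(K+1))/b$. This gives $\theta_{\kappa^-}(\xi_0)<n\pi<\theta_{\kappa^+}(\xi_0)$, so the intermediate value theorem produces some $\kappa\in[\kappa^-,\kappa^+]$ with $\theta_\kappa(\xi_0)=n\pi$. I define $\kappa_n$ as the smallest such $\kappa$ above the fixed threshold $2M/c^2$; this set is closed, so the minimum exists.

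Monotonicity in $n$ then follows directly: $\theta_{\kappa_n}(\xi_0)=n\pi<(n+1)\pi$, while any $\kappa$ with $\theta_\kappa(\xi_0)=(n+1)\pi$ must exceed some point where the phase already reached $n\pi$. More concretely, by continuity and minimality, $\kappa_{n+1}>\kappa_n$, because on $[\kappa_{\min},\kappa_{n+1}]$ the phase passes $n\pi$ before $(n+1)\pi$. Then $n-C_1\le\kappa_n b/\pi\le n+C_2$ holds with $C_1=C_2=(K+1)/\pi$, and $\kappa_n/n\to\pi/b$ follows.

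The main obstacle is the uniform $O(1)$ control of the Prüfer phase. Note that $R_\kappa$ is not monotone in $\kappa$ in an obvious way (only $\Lambda_\kappa$ is monotone), so I would avoid Sturm comparison and rely solely on the explicit two-sided bound above. Getting the error of $\sqrt{R_\kappa}$ uniformly bounded needs exactly $\Lambda_\kappa=O(\kappa)$, which Lemma~\ref{lem:bounded} supplies.
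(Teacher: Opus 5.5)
Your proposal is correct and follows the paper's proof essentially step for step. Both arguments use the Prüfer phase with $\theta(0)=\pi/2$ and the ODE $\theta'=\sqrt{R_\kappa}+\frac{R_\kappa'}{4R_\kappa}\sin 2\theta$, whose correction term has a $\kappa$-independent bound. Both derive a two-sided $O(1)$ phase estimate $|\theta_\kappa(\xi_0)-\kappa b|\le K$ from $\Lambda_\kappa\le M\kappa$, then apply the intermediate value theorem and take the minimal root, whose minimality gives strict monotonicity.

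The differences are cosmetic. You bound $\kappa c\cosh\xi-\sqrt{R_\kappa}\le \Lambda_\kappa/(\kappa c)$ directly, where the paper goes through the factor $\sqrt{1-\tilde M/\kappa}$. Also, your monotonicity step, like the paper's, tacitly needs $n\pi>\theta_{K_0}(\xi_0)$; this holds for $n$ large.
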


\begin{proof}
The proof relies on polar coordinates for the solution, the so called Pr\"ufer transformation. For $\kappa>\frac{2M}{c^2}$, $R_\kappa(\xi)>0$ on $[0,\xi_0]$. However, for technical reasons the polar coordinates are chosen such that $F$ is situated at the $y$-axis and $F'$ at the $x$-axis.
Since $F_\kappa$ and $F'_\kappa$ cannot simultaneously vanish (by uniqueness for the linear
homogeneous initial value problem), there are continuous functions $\rho(\xi,\kappa)>0$ and
$\theta(\xi,\kappa)$ on $[0,\xi_0]$ with
\begin{equation}\label{eq:prufer}
 F_\kappa(\xi)=\rho(\xi,\kappa)\sin\theta(\xi,\kappa),\qquad
 F'_\kappa(\xi)=\rho(\xi,\kappa)\sqrt{R_\kappa(\xi)}\cos\theta(\xi,\kappa).
\end{equation}
The amplitude is determined by the energy
$\rho^2(\cdot,\kappa)=F^2_\kappa+(F'_\kappa)^2/R_\kappa$, and the phase by
$\cot\theta(\cdot,\kappa)=\frac{\sqrt{R_\kappa}F_\kappa'}{F_\kappa}$. Further note that $\rho(0,\kappa)=1$ and $\theta(0,\kappa)=\frac{\pi}{2}$.
Hence in the vicinity of $\xi=0$, we find
$\theta(\cdot,\kappa)=\textrm{arccot}(\sqrt{R_\kappa}\,F_\kappa'/F_\kappa)$ (once $F_\kappa$ approaches $0$, the $\arctan$ could be used).  

{\em Determining an ODE for $\theta$.}
We wish to find an explicit ODE for $\theta(\cdot, \kappa)$, as $F_\kappa(\xi_0)=0$ is equivalent for $\sin(\theta_\kappa(\xi_0))=0$, which means that $\theta_\kappa(x_0)=n\pi$, for some $n\in \N$. This will eventually produce the infinity many $\kappa_n$.

For that the first identity in \eqref{eq:prufer} is differentiated and compared with the second one,
\begin{align}
\label{eq:1}
\partial_\xi\rho\sin\theta+\rho\partial_\xi \theta\cos\theta=F'_\xi=\rho\sqrt{R_\kappa}\cos\theta.
\end{align}
Differentiating the second and using $F_\kappa''=-R_\kappa F$ yields
\begin{align}
\label{eq:2}
\partial_\xi\rho\sqrt{R_\kappa}\cos\theta+\rho\frac{R_\kappa'}{2\sqrt{R_\kappa}}\cos\theta
-\rho\sqrt{R_\kappa}\partial_\xi \theta \sin\theta=-R_\kappa F_\kappa=-R_\kappa\rho\sin\theta
\end{align}
Multiplying \eqref{eq:1} $\cos\theta$ implies
\[
\partial_\xi\rho\sin\theta\cos\theta+\rho\partial_\xi \theta\cos^2\theta=\rho\sqrt{R_\kappa}\cos^2\theta
\]
Multiplying \eqref{eq:2} with $\sin\theta/\sqrt{R_k}$ implies
\[
\rho \partial_\xi\theta \sin^2\theta=\partial_\xi\rho\sin(\theta)\cos(\theta)+\rho\frac{R'_\kappa}{2R_\kappa}\sin(\theta)\cos(\theta)+\rho\sqrt{R_\kappa}\sin^2\theta
\]
Adding them removes the $\partial_\xi \rho$ term such that we reach
\[
\rho \partial_\xi \theta = \rho\frac{R'_\kappa}{2R_\kappa}\sin(\theta)\cos(\theta)+\rho\sqrt{R_\kappa}
\] 
which simplifies to
\begin{equation}\label{eq:prufer-ODE}
 \partial_\xi \theta=\sqrt{R_\kappa}+\frac{R_\kappa'}{4R_\kappa}\sin(2\theta).
\end{equation}
Both $\rho$ and $\theta$ depend jointly continuously on $(\xi,\kappa)\in
[0,\xi_0]\times[K_0,\infty)$. Indeed, since on this set $R_\kappa$ is continuous and bounded
below by $\kappa^2c^2/2>0$, so the algebraic relations defining $(\rho,\theta)$
from $(F_\kappa,F'_\kappa)$ are smooth functions of $(F_\kappa,F'_\kappa,R_\kappa)$, which themselves
depend continuously on $\kappa$ as noted in the paragraph preceding the lemma.

\emph{Phase growth.}
We find the following quantitative bound:
\begin{align}
\label{eq:Rbound}
R_\kappa(\xi)=\kappa^2c^2\cosh^2\xi-\Lambda_\kappa\geq \kappa^2c^2 (\cosh^2\xi-1)+(\kappa c^2-M)\kappa
\end{align}
This implies in particular that $R_\kappa\geq \kappa^2c^2/2>0$ for $\kappa\geq \frac{2M}{c^2}$. Further, one can deduce that
\begin{align}
\label{eq:assymptotics}
\begin{aligned}
\kappa^2c^2\Big(\cosh^2\xi - \frac{M}{c^2\kappa}\Big)&\leq R_\kappa(\xi)\leq \kappa^2c^2\cosh^2\xi,
\intertext{which implies definig $\tilde{M}:=\frac{M}{c^2}$ that}
c^2 \kappa^2 \cosh^2\xi \Big(1 - \frac{\tilde{M}}{\kappa}\Big)&\leq R_\kappa(\xi)\leq c^2 \kappa^2 \cosh^2\xi
\end{aligned}
\end{align}
This essentially describes the dynamics of $\theta$ (and as such the zero's of $F$). Indeed, as also
$0\leq R_\kappa'(\xi)=2\kappa^2c^2\sinh\xi\cosh\xi\leq 4\kappa^2c^2\sinh\xi_0\cosh\xi_0$ on $[0,\xi_0]$ we find that the "correction term" that is related to the inhomogeneity of the energy is bounded by a constant independent of $\kappa$:
\begin{align}
\label{eq:R}
0\leq \frac{R_k'}{4R_k}\leq C_0.
\end{align}
 Now integrating \eqref{eq:prufer-ODE} and using
the bounds implies that
\begin{align*}
 \theta(\xi_0,\kappa)-\theta(0,\kappa)
 &=\int_0^{\xi_0}\sqrt{R_\kappa(\xi)}+\frac{R_k'}{4R_k}\sin(2\theta)\, d\xi
 \geq\kappa c \sqrt{1-\frac{\tilde{M}}{\kappa}} \int_0^{\xi_0}\cosh\xi\,d\xi- \xi_0 C_0
\\
 &=\kappa c \sqrt{1-\frac{\tilde{M}}{\kappa}}\sinh\xi_0- \xi_0 C_0
 =\kappa b  \sqrt{1-\frac{\tilde{M}}{\kappa}}- \xi_0 C_0\geq \frac{\kappa b}{2}
\end{align*}
for $\kappa$ sufficiently large. Since $\theta(\xi_0,\cdot)$ is continuous on $[K_0,\infty)$, $K_0:=2M/c^2$, and tends to $+\infty$, the intermediate value theorem provides, for every integer $n>\theta(\xi_0,K_0)/\pi$, a solution of $\theta(\xi_0,\kappa)=n\pi$ in $(K_0,\infty)$; let $\kappa_n$ be the smallest one. Then $\sin\theta(\xi_0,\kappa_n)=0$, i.e.\ $F_{\kappa_n}(\xi_0)=0$, and $\kappa_n<\kappa_{n+1}$: otherwise $\theta(\xi_0,\cdot)$ would take the value $n\pi$ on $(K_0,\kappa_{n+1})\subset(K_0,\kappa_n)$, contradicting minimality. The values $\kappa_n$ have a precise asymptotic. By \eqref{eq:assymptotics} we also have the upper bound
\begin{align*}
 \theta(\xi_0,\kappa)-\theta(0,\kappa)
 &=\int_0^{\xi_0}\sqrt{R_\kappa(\xi)}+\frac{R_\kappa'}{4R_\kappa}\sin(2\theta)\, d\xi
 \leq \kappa c \int_0^{\xi_0}\cosh\xi\,d\xi+ \xi_0 C_0
 =b\kappa+ \xi_0 C_0.
\end{align*}
Using $\theta(0,\kappa)=\frac{\pi}{2}$, $\theta(\xi_0,\kappa_n)=n\pi$ and $\sqrt{1-z}\ge1-z$ for $z\in[0,1]$, we deduce
\[
b\kappa_n-b\tilde{M} - \xi_0 C_0+\frac{\pi}{2}\leq 
\theta(\xi_0,\kappa_n)=\pi n \leq b\kappa_n+\xi_0 C_0+\frac{\pi}{2},
\]
that is, $n-C_1\le\kappa_nb/\pi\le n+C_2$ with $C_1:=(\xi_0C_0+\pi/2)/\pi$ and $C_2:=(b\tilde M+\xi_0C_0-\pi/2)/\pi$.
\end{proof}
We conclude the section by showing some uniform bounds on the radial solution.
\begin{lemma}
\label{lem:boundedF}
The solution $F_\kappa$ to the initial value problem \eqref{eq:ivp} satisfies
\[
\abs{F'_\kappa}^2(\xi)+R_\kappa(\xi)\abs{F_\kappa(\xi)}^2\leq R_\kappa(0)e^{4C_0\xi},
\]
and so there is a constant $B,\tilde{b}$ independent of $\kappa$ such that
\[
\norm{F_\kappa}_\infty\leq Be^{\tilde{b}\xi_0}
,\quad \norm{F_\kappa'}_\infty\leq \kappa Be^{\tilde{b}\xi_0}.
\]
Further 
\[
\norm{F'_\kappa}_{L^2(0,\xi_0)}=\norm{\sqrt{R_\kappa}F_\kappa}_{L^2(0,\xi_0)}\geq \frac{\sqrt{\xi_0} c \kappa}{2}.
\]
\end{lemma}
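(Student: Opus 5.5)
The plan is an energy argument for the Prüfer energy
\[
E(\xi):=\abs{F'_\kappa(\xi)}^2+R_\kappa(\xi)\abs{F_\kappa(\xi)}^2 .
\]
Differentiating and using $F''_\kappa=-R_\kappa F_\kappa$ from \eqref{eq:ivp}, the cross terms cancel and
\[
E'(\xi)=R'_\kappa(\xi)\abs{F_\kappa(\xi)}^2\le \frac{R'_\kappa}{R_\kappa}\,E(\xi)\le 4C_0\,E(\xi),
\]
where the last step uses $0\le R'_\kappa/(4R_\kappa)\le C_0$ from \eqref{eq:R}. Since $E(0)=R_\kappa(0)$ by the initial data $F_\kappa(0)=1$, $F'_\kappa(0)=0$, Gronwall's inequality gives the first claim. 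A sharper form is $E\le R_\kappa(0)\,R_\kappa(\xi)/R_\kappa(0)=R_\kappa(\xi)$, obtained by integrating $(\log E)'\le(\log R_\kappa)'$, but the stated exponential form suffices.

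For the sup bounds, $R_\kappa\ge\kappa^2c^2/2$ and $R_\kappa(0)\le\kappa^2c^2$ by \eqref{eq:assymptotics}. Hence
\[
\abs{F_\kappa}^2\le \frac{R_\kappa(0)}{R_\kappa(\xi)}\,e^{4C_0\xi}\le 2e^{4C_0\xi_0},
\]
and $\abs{F'_\kappa}^2\le \kappa^2c^2e^{4C_0\xi_0}$. This yields $B=\max\{\sqrt2,c\}$ and $\tilde b=2C_0$, both independent of $\kappa$.

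For the lower bound, I read the stated equality $\norm{F'_\kappa}_{L^2}=\norm{\sqrt{R_\kappa}F_\kappa}_{L^2}$ as holding at the quantized values $\kappa=\kappa_n$, where $F_\kappa(\xi_0)=0$. Multiplying the ODE by $F_\kappa$ and integrating by parts on $[0,\xi_0]$ gives
\[
\int_0^{\xi_0}\abs{F'_\kappa}^2\,d\xi=\int_0^{\xi_0}R_\kappa\abs{F_\kappa}^2\,d\xi+\bigl[F_\kappa F'_\kappa\bigr]_0^{\xi_0},
\]
and the boundary term vanishes because $F'_\kappa(0)=0$ and $F_\kappa(\xi_0)=0$. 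For general $\kappa$ I would prove only the lower bound on each term separately.

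To bound these from below, I use the Prüfer variables \eqref{eq:prufer}. Then
\[
\abs{F'_\kappa}^2+R_\kappa\abs{F_\kappa}^2=R_\kappa\rho^2 ,
\]
and $(\rho^2)'$ has the same sign structure, so $\rho$ stays comparable to $1$. The lower estimate is the analogue of the Gronwall bound run backwards: $(\log E)'\ge0$ since $R'_\kappa\ge0$, so $E(\xi)\ge E(0)=R_\kappa(0)\ge\kappa^2c^2/2$. Integrating over $[0,\xi_0]$ gives
\[
\norm{F'_\kappa}_{L^2}^2+\norm{\sqrt{R_\kappa}F_\kappa}_{L^2}^2\ge \frac{\xi_0\kappa^2c^2}{2}.
\]
At $\kappa=\kappa_n$ the two terms are equal, so each is at least $\xi_0\kappa^2c^2/4$, which is exactly the claimed bound.

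The main obstacle is that for $\kappa\ne\kappa_n$ the boundary term does not vanish. There I would instead split $E$ by phase equipartition: since $\theta'\approx\sqrt{R_\kappa}\gtrsim\kappa$, the averages of $\sin^2\theta$ and $\cos^2\theta$ over $[0,\xi_0]$ are each $\tfrac12+O(1/\kappa)$. I expect this to be needed only at the quantized $\kappa_n$ anyway.
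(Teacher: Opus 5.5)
Your proposal is correct and follows the paper's proof essentially step for step: the energy identity $E'=R'_\kappa\abs{F_\kappa}^2\le 4C_0E$ via \eqref{eq:R} and Gronwall for the upper bounds, and $E\ge E(0)=R_\kappa(0)\ge\kappa^2c^2/2$ from $R'_\kappa\ge0$. The equipartition identity then needs $F_\kappa(\xi_0)=0$, so it holds at the quantized $\kappa_n$, exactly as you note and as the paper's proof also assumes; your sharper bound $E\le R_\kappa$ (hence $\abs{F_\kappa}\le1$) is a valid bonus, while the Pr\"ufer-amplitude and non-quantized remarks are not needed.
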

\begin{proof}
Multiplying the equation with $F'_\kappa$ implies using \eqref{eq:R} that
\begin{align*}
\partial_\xi\Big(\frac{\abs{F'_\kappa}^2}{2}+R_\kappa\frac{ \abs{F_\kappa}^2}{2}\Big)= R'_\kappa\frac{\abs{F_\kappa}^2}{2}\leq  4C_0 \Big(R_\kappa \frac{\abs{F_\kappa}^2}{2}+\frac{\abs{F'_\kappa}^2}{2}\Big).
\end{align*}
Hence the upper bounds follow by Gronwall and \eqref{eq:assymptotics}.
For the lower bound we use that that $R_\kappa'\geq 0$ and \eqref{eq:assymptotics}, which implies by the above for $\kappa$ large enough that
\[
\abs{F'_\kappa}^2(\xi)+R_\kappa(\xi)\abs{F_\kappa}^2(\xi)\geq R_\kappa(0)\geq \frac{c^2 \kappa^2}{2}.
\]
Further as $F(\xi_0)=0$ and $F'(0)=0$ equipartition is available, $\int_0^{\xi_0}\abs{F'_\kappa}^2\,d\xi=\int_0^{\xi_0}R_\kappa\abs{F_\kappa}^2\,d\xi$, and so
\[
2\int_0^{\xi_0} \abs{F'_\kappa}^2\, d\xi =\int_0^{\xi_0}\bigl(\abs{F'_\kappa}^2+R_\kappa\abs{F_\kappa}^2\bigr)\, d\xi \geq \xi_0R_\kappa(0)\ge\frac{\xi_0 c^2 \kappa^2}{2}.
\]
\end{proof}

\subsection{Construction of the eigenfunctions}\label{ssec:smooth-ext}

We now justify rigorously that the formula $F(\xi)G_\kappa(\eta)$ produces a
genuine smooth Dirichlet eigenfunction on the ellipse. The key observation is
that the elliptic coordinates form a \emph{conformal} system in two dimensions,
so the Dirichlet form has no metric prefactor; the focal-segment degeneracy is
then handled cleanly by passing to the weak formulation and invoking elliptic
regularity on the smooth domain $E$.

Assume now that $\kappa\ge \frac{2M}{c^2}$ and let $G_\kappa:C^\infty(\T_{2\pi})\to\R$ be the angular ground state (the unique and positive eigenfunction to the smallest eigenvalue) of
Lemma~\ref{lem:ground-state}, and let $F_\kappa\in C^\infty([0,\xi_0])$ satisfy
the radial equation $F_\kappa''+(\kappa^2c^2\cosh^2\xi-\Lambda_\kappa)F=0$ for any $\kappa$ such that
\[
 F_\kappa'(0)=0,\qquad F_\kappa(0)=1,\quad F_\kappa(\xi_0)=0.
\]
Define $U_\kappa:E\to\R$ by
\[
 U_\kappa(x,y):=F_\kappa(\xi(x,y))\,G_\kappa(\eta(x,y))
 \qquad\text{for }(x,y)\in E.
\]
This is well defined on the focal segment $\{\xi=0\}$, where $(0,\eta)$ and $(0,-\eta)$ have the same image, because $G_\kappa(-\eta)=G_\kappa(\eta)$; moreover $U_\kappa(x,y)=U_\kappa(x,-y)=U_\kappa(-x,y)$ by the symmetries of $G_\kappa$ in Lemma~\ref{lem:ground-state}.
In order to show continuity at on all of the ellipse take $\tilde{x}\in [-c,c]$. We find that
 $(\tilde{x},0)=(c\cos(\tilde{\eta}),0)$, 
for a unique $\tilde{\eta}\in [0,\pi)$. Now consider a sequence
 $\lim_i(x_i,y_i)= (\tilde{x},0)$, but this is equivalent to a sequence $(\eta_i,\xi_i)$ such that $\lim_i \abs{\eta_i}= \tilde{\eta}$ and $\lim_i \xi_i=0$, where $(\eta_i,\xi_i)$ are uniquely determined by the coordinate functions 
 \[
 x_i=c\cosh\xi_i\cos\eta_i,\qquad y_i=c\sinh\xi_i\sin\eta_i.
\]
Hence
\[
\lim_{i} U_\kappa(x_i,y_i)=\lim_{i}F_\kappa(\xi_i)G_\kappa(\eta_i)=G_\kappa(\tilde{\eta})=:U_\kappa(x,0).
\] 
Further as $(x,y)\in\partial E$ exactly when $\xi=\xi_0$, we find that $U_\kappa=0$ on $\partial E$ we conclude that $U_\kappa\in C^0_0(E)$. Next we show it is actually a smooth solution to the eigenvalue problem.
\begin{lemma}\label{lem:smooth-ext}
Let $\kappa\ge K_1:=\max\{2M/c^2,\,16c_2/\pi^2\}$, with $c_2$ from Lemma~\ref{lem:agmon}, satisfy $F_\kappa(\xi_0)=0$. Then:
\begin{enumerate}[label=\textup{(\roman*)}]
 \item $U_\kappa\in H_0^1(E)$;
 \item $U_\kappa$ is a weak solution of $-\Delta U_\kappa=\kappa^2U_\kappa$ in $E$;
 \item $U_\kappa\in C^\infty(\overline E)$, and $U_\kappa$ is even in $x$ and in $y$;
 \item $c_0\le\norm{U_\kappa}_{L^2(E)}^2\le C_1^2$, with $c_0:=\xi_0c^2/16$ and $C_1$ independent of $\kappa$;
\item for every $\eps\in(0,a\sin(\pi/8))$ there is $C_\eps>0$, independent of $\kappa$, such that $\norm{U_\kappa}_{H^1(E\cap\{\abs{x}>\eps\})} \leq C_\eps\,\kappa^2 e^{-\tilde{c}\eps^2\kappa}$, where $\tilde c:=c_1/(2a^2)$ with $c_1$ from Lemma~\ref{lem:agmon}.
\end{enumerate}
\end{lemma}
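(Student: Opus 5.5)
The plan is to work entirely in elliptic coordinates, where the map $\Phi:(\xi,\eta)\mapsto(x,y)$ of \eqref{eq:ellipt-coord} is conformal with factor $h^2=c^2(\cosh^2\xi-\cos^2\eta)$. For any $\psi\in C^\infty_c(E)$ one has $\int_E\nabla U_\kappa\cdot\nabla\psi\,dx\,dy=\int(\partial_\xi\tilde U\partial_\xi\tilde\psi+\partial_\eta\tilde U\partial_\eta\tilde\psi)\,d\xi\,d\eta$ and $\int_E U_\kappa\psi=\int h^2\tilde U\tilde\psi$, with no metric prefactor in the Dirichlet form, and the integrals are taken over the rectangle $(0,\xi_0)\times\T_{2\pi}$.

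\emph{Items (i) and (ii).} For (i), $\tilde U=F_\kappa G_\kappa$ is smooth on the rectangle. The Dirichlet integral is therefore finite: it equals $\norm{F'_\kappa}^2_{L^2}\norm{G_\kappa}^2+\norm{F_\kappa}^2\norm{G'_\kappa}^2<\infty$. Combined with continuity and vanishing on $\partial E$ (established before the lemma), and the fact that the focal segment is a null set, this gives $U_\kappa\in H^1_0(E)$. For (ii), integrate by parts in $\xi$ and $\eta$. Periodicity kills the $\eta$-boundary terms. In $\xi$, the boundary term at $\xi_0$ vanishes because $\psi$ is compactly supported, and the term at $\xi=0$ is $-\int F'_\kappa(0)G_\kappa\tilde\psi\,d\eta=0$ since $F'_\kappa(0)=0$. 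The separated equations \eqref{eq:separated}--\eqref{eq:Lk-def} give $-\partial_\xi^2\tilde U-\partial_\eta^2\tilde U=\kappa^2c^2(\cosh^2\xi-\cos^2\eta)\tilde U=\kappa^2h^2\tilde U$, which is exactly the weak eigenvalue equation.

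\emph{Item (iii) and the mass bounds.} Elliptic regularity for the Dirichlet Laplacian on the smooth domain $E$, bootstrapped, yields $U_\kappa\in C^\infty(\overline E)$. Evenness follows from the symmetries of $G_\kappa$ in Lemma~\ref{lem:ground-state}. For the upper mass bound we write $\norm{U_\kappa}^2_{L^2}=\int h^2F_\kappa^2G_\kappa^2$. This is at most $c^2\cosh^2\xi_0\,\xi_0\norm{F_\kappa}_\infty^2$, which Lemma~\ref{lem:boundedF} bounds independently of $\kappa$.

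The lower bound in (iv) is the delicate point, since $h^2$ degenerates at the foci. I would use the equipartition identity $\int h^2\tilde U^2\,\kappa^2=\int|\nabla\tilde U|^2\ge\norm{F'_\kappa}^2_{L^2}$. Lemma~\ref{lem:boundedF} then gives $\kappa^2\norm{U_\kappa}^2\ge \xi_0c^2\kappa^2/4$, which is (better than) the claimed $c_0$.

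\emph{Localization, item (v).} This is the main step. The set $E\cap\{|x|>\eps\}$ lies inside $\{|c\cosh\xi\cos\eta|>\eps\}$, hence inside $\{|\cos\eta|>\eps/a\}$. Since $|\cos\eta|=\sin d(\eta)\le d(\eta)$, it is contained in $\Phi(\,[0,\xi_0]\times S_\delta)$ with $\delta=\eps/a\le\sin(\pi/8)<\pi/4$. The constraint $\kappa\ge 16c_2/\pi^2$ should make $\kappa\ge c_2/\delta^2$ hold; otherwise I would enlarge the constant $C_\eps$ for small $\kappa$.

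Transferring to coordinates, the $L^2$ and gradient norms pick up at most factors of $h^2\le a^2$ (the gradient norm being conformally invariant). We therefore get
\[
\norm{U_\kappa}^2_{H^1(E\cap\{|x|>\eps\})}\lesssim \norm{F_\kappa}_\infty^2\int_{S_\delta}G_\kappa^2+\norm{F'_\kappa}_\infty^2\int_{S_\delta}G_\kappa^2+\norm{F_\kappa}_\infty^2\int_{S_\delta}(G'_\kappa)^2 .
\]
By Lemma~\ref{lem:boundedF} and Lemma~\ref{lem:agmon}, this is $\lesssim \kappa^2\,e^{-c_1\delta^2\kappa}\cdot\mathrm{poly}(\kappa,\delta^{-1})$. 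Taking square roots gives the exponent $c_1\eps^2\kappa/(2a^2)=\tilde c\eps^2\kappa$ and a polynomial prefactor absorbed into $C_\eps\kappa^2$.

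The main obstacle is precisely matching the geometry of $\{|x|>\eps\}$ to the angular set $S_\delta$, together with tracking the $\delta$-dependence of the constants.
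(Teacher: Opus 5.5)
Your proof is correct. Items (i), (iii), the upper bound in (iv), and (v) run as in the paper; (ii) and the lower bound in (iv) take different routes.

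\emph{Item (ii).} The paper extends $F_\kappa$ evenly to $(-\xi_0,\xi_0)$ and uses the identification $(\xi,\eta)\sim(-\xi,-\eta)$ to show that $U_\kappa$ is smooth across the open focal segment. It then verifies $-\Delta U_\kappa=\kappa^2U_\kappa$ pointwise off the foci, and removes the foci by density of $C^\infty_0(E\setminus\{(\pm c,0)\})$ in $H^1_0(E)$. You instead integrate by parts on the closed rectangle $[0,\xi_0]\times\T_{2\pi}$ against arbitrary $\psi\in C^\infty_c(E)$. There the foci are mere boundary points, and the only new boundary term, at $\xi=0$, vanishes because $F'_\kappa(0)=0$. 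This is shorter and needs no capacity argument.

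The price is that your (ii) leans entirely on (i) delivering the weak gradient. There, ``the focal segment is a null set'' is not the operative reason: a continuous function that is $C^1$ off a closed null set need not be in $H^1$ (e.g.\ a Cantor function of $x$). What works is continuity of $U_\kappa$ across the segment plus absolute continuity on lines crossing it. Alternatively, use the paper's smoothness across the open segment, which leaves only two removable points.

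\emph{Lower bound in (iv).} The paper combines Lemma~\ref{lem:agmon} at $\delta=\pi/4$ with the symmetries of $G_\kappa$ to put half of the angular mass on arcs where $h^2\ge R_\kappa/(2\kappa^2)$, and then uses Lemma~\ref{lem:boundedF}. Your chain $\kappa^2\norm{U_\kappa}_{L^2(E)}^2=\norm{\nabla U_\kappa}_{L^2(E)}^2\ge\norm{F'_\kappa}_{L^2(0,\xi_0)}^2\ge\xi_0c^2\kappa^2/4$ uses no localization and no symmetry. It gives the better constant $\xi_0c^2/4$. It also needs no enlargement of $K_1$, which the paper's route requires (``if necessary'') to force $\int_{S_{\pi/4}}G_\kappa^2\,d\eta\le\frac12$.

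\emph{Item (v).} Taking $\delta=\eps/a$ in place of $\arcsin(\eps/a)$ gives exactly the exponent $\tilde c\eps^2\kappa$. Your handling of the range $K_1\le\kappa<c_2/\delta^2$ by enlarging $C_\eps$ is legitimate, since $\kappa$ is bounded there and $\norm{U_\kappa}_{H^1(E)}\lesssim\kappa$. This makes explicit a step the paper leaves implicit.
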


\begin{proof}
 The map $(\xi,\eta)\mapsto(x,y)$ of \eqref{eq:ellipt-coord}, extended to $\xi\in(-\xi_0,\xi_0)$, is a local diffeomorphism at every point where $h\neq0$, that is, away from $(\xi,\eta)\in\{(0,0),(0,\pi)\}$, whose images are the foci $(\pm c,0)$; it is two-to-one, $(\xi,\eta)$ and $(-\xi,-\eta)$ having the same image. The even extension of $F_\kappa$ to $(-\xi_0,\xi_0)$ (still denoted $F_\kappa$) is smooth and solves \eqref{eq:ivp} there, because $F'_\kappa(0)=0$ and $R_\kappa$ is even; together with $G_\kappa(-\eta)=G_\kappa(\eta)$ this shows that $F_\kappa(\xi)G_\kappa(\eta)$ is invariant under $(\xi,\eta)\mapsto(-\xi,-\eta)$, so that $U_\kappa$ is well defined and smooth on $E\setminus\{(\pm c,0)\}$, including across the open focal segment. By \eqref{eq:laplace}, \eqref{eq:separated} and \eqref{eq:Lk-def},
\[
-\Delta U_\kappa (x(\xi,\eta),y(\xi,\eta))=-\frac{1}{h^2}\big(F_\kappa''(\xi) G_\kappa(\eta)+F_\kappa(\xi) G_\kappa''(\eta) \big)= {\kappa^2 U_\kappa} (x(\xi,\eta),y(\xi,\eta))
\]
on $E\setminus\{(\pm c,0)\}$.

{\em Proof of (i):} From $F'_\kappa G_\kappa =\alpha\,\partial_x U_\kappa + \beta\,\partial_y U_\kappa$ and $F_\kappa G'_\kappa =-\beta\,\partial_x U_\kappa + \alpha\,\partial_y U_\kappa$ we get
\[
\partial_x U_\kappa =\frac{\alpha F'_\kappa G_\kappa-\beta F_\kappa G'_\kappa}{h^2},\qquad \partial_y U_\kappa =\frac{\beta F_\kappa 'G_\kappa+\alpha F_\kappa G_\kappa'}{h^2},
\]
so that $\abs{\nabla U_\kappa}^2=h^{-2}\bigl((F'_\kappa G_\kappa)^2+(F_\kappa G'_\kappa)^2\bigr)$,
since $\alpha^2+\beta^2=h^2$. As the area element is $h^2\,d\xi\,d\eta$, we obtain
\[
\int_{E}\abs{\nabla U_\kappa}^2\, dx\, dy=\int_0^{\xi_0}\int_0^{2\pi} \bigl(\abs{F'_\kappa}^2\abs{G_\kappa}^2+\abs{G_\kappa'}^2\abs{F_\kappa}^2\bigr)\, d\eta\,d\xi<\infty
\]
by Lemma~\ref{lem:boundedF}. Hence $U_\kappa\in H^1(E)$; being continuous on $\overline E$ with $U_\kappa=0$ on $\partial E$, it belongs to $H^1_0(E)$.

{\em Proof of (ii):}  Multiplying the pointwise equation by $\phi\in C^\infty_0(E\setminus \{(\pm c,0)\})$ and integrating by parts,
\[
\int_E\nabla U_\kappa\cdot\nabla\phi\, dx\,dy-\kappa^2\int_E  U_\kappa \phi\, dx\, dy=0 .
\]
Since points have zero capacity in two dimensions, $C^\infty_0(E\setminus\{(\pm c,0)\})$ is dense in $H^1_0(E)$, and the identity extends to all $\phi\in H^1_0(E)$. Thus $U_\kappa$ is a weak Dirichlet eigenfunction, and elliptic regularity on the smooth domain $E$ gives $U_\kappa\in C^\infty(\overline E)$. 

{\em Proof of (iii):} The symmetries follow from $G_\kappa(-\eta)=G_\kappa(\eta)=G_\kappa(\pi-\eta)$, since $(x,y)\mapsto(x,-y)$ and $(x,y)\mapsto(-x,y)$ correspond to $\eta\mapsto-\eta$ and $\eta\mapsto\pi-\eta$. 

{\em Proof of (iv):} By the symmetries of $G_\kappa$ the four arcs $[\frac{\pi}{4},\frac{\pi}{2}]$, $[\frac{\pi}{2},\frac{3\pi}{4}]$, $[\frac{5\pi}{4},\frac{3\pi}{2}]$ and $[\frac{3\pi}{2},\frac{7\pi}{4}]$ carry the same $L^2$-mass, and their union is the complement of $S_{\pi/4}$; Lemma~\ref{lem:agmon} with $\delta=\pi/4$ (applicable since $\kappa\ge16c_2/\pi^2$) gives
\[
4 \int_{\pi/4}^{\pi/2}\abs{G_\kappa}^2\,d\eta=1-\int_{S_{\pi/4}}\abs{G_\kappa}^2\,d\eta\geq \frac{1}{2}
\]
after enlarging $K_1$ if necessary. On these four arcs $\cos^2\eta\le\frac12$, so $h^2\ge c^2(\cosh^2\xi-\frac12)\ge\frac{c^2}{2}\cosh^2\xi\ge\frac{R_\kappa(\xi)}{2\kappa^2}$ by \eqref{eq:assymptotics}. Hence, using Lemma~\ref{lem:boundedF},
\begin{align}
\label{eq:lower bound}
\norm{U_\kappa}_{L^2(E)}^2&=\int_0^{2\pi}\abs{G_\kappa(\eta)}^2\int_0^{\xi_0}\abs{F_\kappa(\xi)}^2 h^2(\xi,\eta)\,d\xi\, d\eta
\geq 
4\int_\frac{\pi}{4}^{\frac{\pi}{2}}\abs{G_\kappa}^2\, d\eta\int_0^{\xi_0}\abs{F_\kappa}^2 \frac{R_\kappa(\xi)}{2\kappa^2}\,d\xi\notag\\
&\geq \frac{1}{4\kappa^2}\cdot\frac{\xi_0c^2\kappa^2}{4}=\frac{\xi_0c^2}{16}=c_0 .
\end{align}
The upper bound $\norm{U_\kappa}_{L^2(E)}^2\le\xi_0a^2\norm{F_\kappa}_\infty^2=:C_1^2$ follows from $h^2\le c^2\cosh^2\xi_0=a^2$, $\norm{G_\kappa}_{L^2}=1$ and Lemma~\ref{lem:boundedF}.

{\em Proof of (v):}  Let $\eps\in(0,a\sin(\pi/8))$ and $\delta:=\arcsin(\eps/a)\in(0,\pi/8)$. On $E\cap\{\abs x>\eps\}$ we have $\abs{\cos\eta}=\abs x/(c\cosh\xi)>\eps/a$, i.e.\ $\eta\in S_\delta$. Hence, by Lemma~\ref{lem:boundedF} and Lemma~\ref{lem:agmon}, for $\kappa\ge c_2/\delta^2$,
\begin{align*}
\norm{U_\kappa}_{L^2(E\cap\{\abs x>\eps\})}^2&\le \int_{S_\delta}\abs{G_\kappa}^2\,d\eta\ \sup_\eta\int_0^{\xi_0}\abs{F_\kappa}^2h^2\,d\xi\le a^2\xi_0\norm{F_\kappa}_\infty^2\,\frac{C}{\delta^2\kappa}\,e^{-c_1\delta^2\kappa},\\
\norm{\nabla U_\kappa}_{L^2(E\cap\{\abs x>\eps\})}^2&\le \int_{S_\delta}\int_0^{\xi_0}\bigl(\abs{F'_\kappa}^2\abs{G_\kappa}^2+\abs{F_\kappa}^2\abs{G'_\kappa}^2\bigr)\,d\xi\,d\eta\\
&\le \xi_0\bigl(\norm{F'_\kappa}_\infty^2+\norm{F_\kappa}_\infty^2\bigr)\,C\kappa\,e^{-c_1\delta^2\kappa}\le C'\kappa^3e^{-c_1\delta^2\kappa}.
\end{align*}
\end{proof}
We can deduce also higher order norms both in positive and negative spaces. Recall from Section~\ref{sec:ellipse-quasimode} that
\[
\mathcal{H}^j(E)=\{\phi\in H^j(E):\Delta^m\phi\in H^1_0(E)\text{ for all }m<\frac{j}{2}\},
\]
for $j>0$ and as $\mathcal{H}^{-j}(E)=(\mathcal{H}^j(E))^*$.
The next lemma shows a uniform lower bound of $U_\kappa$ in all these spaces.
\begin{lemma}
\label{lem:lowerbound}
For all $\ell\in \Z$ and $\kappa$ as in Lemma~\ref{lem:smooth-ext},
$
\norm{U_\kappa}_{\mathcal{H}^{\ell}(E)}=\kappa^\ell\norm{U_\kappa}_{L^2(E)}\ge\sqrt{c_0}\,\kappa^\ell,
$
with $c_0$ from Lemma~\ref{lem:smooth-ext}. Moreover, for every integer $m\ge0$ and every $\eps\in(0,a\sin(\pi/8))$ there are $C_{m,\eps}>0$ and $c_m>0$, the latter independent of $\eps$, such that
$
\norm{U_\kappa}_{H^m(E\cap\{\abs{x}>\eps\})} \leq C_{m,\eps}\,\kappa^{2m} e^{-c_m\eps^2\kappa}.
$
\end{lemma}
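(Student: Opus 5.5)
The plan has two parts, corresponding to the two assertions of the lemma.

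\emph{The identity $\norm{U_\kappa}_{\mathcal H^\ell(E)}=\kappa^\ell\norm{U_\kappa}_{L^2(E)}$.} This follows from the eigenfunction structure. By Lemma~\ref{lem:smooth-ext}, $U_\kappa\in C^\infty(\overline E)\cap H^1_0(E)$ and $-\Delta U_\kappa=\kappa^2U_\kappa$. Hence $\Delta^mU_\kappa=(-\kappa^2)^mU_\kappa\in H^1_0(E)$ for every $m$, so $U_\kappa\in\mathcal H^\ell(E)$ for all $\ell\ge0$.

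For $\ell$ even, the defining inner product gives $\norm{\Delta^{\ell/2}U_\kappa}_{L^2}=\kappa^\ell\norm{U_\kappa}_{L^2}$ directly. For $\ell$ odd, we have $\norm{\nabla\Delta^{(\ell-1)/2}U_\kappa}_{L^2}=\kappa^{\ell-1}\norm{\nabla U_\kappa}_{L^2}$. Integrating by parts against the Dirichlet eigenvalue equation gives $\norm{\nabla U_\kappa}_{L^2}^2=\kappa^2\norm{U_\kappa}^2_{L^2}$, which completes the odd case.

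For negative $\ell=-k$, we use duality. The upper bound follows from
\[
\langle U_\kappa,\psi\rangle=\kappa^{-2k}\langle(-\Delta)^kU_\kappa,\psi\rangle=\kappa^{-2k}\langle U_\kappa,\psi\rangle_{\mathcal H^k}\le\kappa^{-2k}\norm{U_\kappa}_{\mathcal H^k}\norm{\psi}_{\mathcal H^k},
\]
where the middle equality uses integration by parts with the boundary conditions built into $\mathcal H^k$; this yields $\norm{U_\kappa}_{\mathcal H^{-k}}\le\kappa^{-k}\norm{U_\kappa}_{L^2}$. The reverse inequality comes from testing with $\psi=U_\kappa$, which gives equality. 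The lower bound $\ge\sqrt{c_0}\kappa^\ell$ is then Lemma~\ref{lem:smooth-ext}(iv).

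\emph{The localized $H^m$ bound.} The case $m\le1$ is Lemma~\ref{lem:smooth-ext}(v). For higher $m$, I would bootstrap with interior/boundary elliptic regularity on nested regions. Fix $\eps$ and the radii $\eps_k:=\eps(1-k/(2m+2))$, which decrease from $\eps$ to roughly $\eps/2$. Take cutoffs $\chi_k(x)$ equal to $1$ for $\abs x\ge\eps_k$ and $0$ for $\abs x\le\eps_{k+1}$, with derivatives bounded by $C_m\eps^{-j}$.

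The function $\chi_kU_\kappa$ vanishes on $\partial E$, and
\[
-\Delta(\chi_kU_\kappa)=\kappa^2\chi_kU_\kappa-2\chi_k'\partial_xU_\kappa-\chi_k''U_\kappa .
\]
Global $H^{s+2}$ regularity for the Dirichlet problem on the smooth domain $E$ then gives
\[
\norm{U_\kappa}_{H^{s+2}(\{\abs x>\eps_k\})}\le C_{s,\eps}\bigl(\kappa^2\norm{U_\kappa}_{H^s(\{\abs x>\eps_{k+1}\})}+\norm{U_\kappa}_{H^{s+1}(\{\abs x>\eps_{k+1}\})}\bigr).
\]
Each step of size two in Sobolev order therefore costs a factor $\kappa^2$ and shrinks the region slightly. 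Starting from the $H^1$ bound on $\{\abs x>\eps/2\}$ given by Lemma~\ref{lem:smooth-ext}(v), whose exponent is $\tilde c(\eps/2)^2\kappa$, and iterating about $m/2$ times, we obtain
\[
\norm{U_\kappa}_{H^m(\{\abs x>\eps\})}\le C_{m,\eps}\kappa^{2+m}e^{-\tilde c\eps^2\kappa/4}.
\]
This is within the stated bound $C_{m,\eps}\kappa^{2m}e^{-c_m\eps^2\kappa}$ with $c_m=\tilde c/4$, a constant independent of $\eps$ (for $m\le1$ use (v) directly).

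\emph{Main obstacle.} The key step is justifying the uniform elliptic estimate with the correct bookkeeping. The regions $\{\abs x>\eps_k\}\cap E$ meet the curved boundary $\partial E$, so one must apply the global Dirichlet regularity estimate to the truncated function $\chi_kU_\kappa$ rather than a purely interior estimate. One must also track that the constants depend on $\eps$ only through the cutoffs, so that the exponent constant stays $\eps$-independent. Halving $\eps$ is what absorbs the loss of domain.
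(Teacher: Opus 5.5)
Your proposal is correct and follows the paper's route in both parts: the norm identity comes from $\Delta^kU_\kappa=(-\kappa^2)^kU_\kappa$, $\norm{\nabla U_\kappa}_{L^2(E)}=\kappa\norm{U_\kappa}_{L^2(E)}$, and Green's identity plus testing with $U_\kappa$ for negative orders, and the localization comes from a cutoff in $x$ combined with global Dirichlet regularity on $E$, bootstrapped from Lemma~\ref{lem:smooth-ext}(v). The only difference is bookkeeping: the paper raises the Sobolev order by one per step and halves $\eps$ each time (giving $c_m=c_{m-1}/4$ and the power $\kappa^{2m}$), whereas your $m+1$ thin nested shells between $\eps/2$ and $\eps$ keep the exponent at $\tilde c\eps^2\kappa/4$ uniformly in $m$ and yield $\kappa^{m+2}$, with the cost (cutoff derivatives of size $(m/\eps)^j$) absorbed into $C_{m,\eps}$.
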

\begin{proof}
Since $\Delta^kU_\kappa=(-\kappa^2)^kU_\kappa\in H^1_0(E)$ for every $k\ge0$, we have $U_\kappa\in\mathcal H^\ell(E)$ for all $\ell\ge0$, and, using $\norm{\nabla U_\kappa}_{L^2(E)}=\kappa\norm{U_\kappa}_{L^2(E)}$ (test the eigenvalue equation with $U_\kappa$),
\[
\norm{U_\kappa}_{\mathcal H^{2k}(E)}=\norm{\Delta^kU_\kappa}_{L^2(E)}=\kappa^{2k}\norm{U_\kappa}_{L^2(E)},\qquad
\norm{U_\kappa}_{\mathcal H^{2k+1}(E)}=\norm{\nabla \Delta^k U_\kappa}_{L^2(E)}=\kappa^{2k+1}\norm{U_\kappa}_{L^2(E)}.
\]
For negative orders let $\ell\ge1$ and $\phi\in\mathcal H^{\ell}(E)$. Moving the Laplacians from $U_\kappa$ to $\phi$ by Green's identity (all boundary terms vanish because $\Delta^mU_\kappa$ and $\Delta^m\phi$, $m<\ell/2$, vanish on $\partial E$) one finds $\int_EU_\kappa\phi\,dx=\kappa^{-2\ell}\langle U_\kappa,\phi\rangle_{\mathcal H^\ell(E)}$. Hence, by Cauchy--Schwarz and the choice $\phi=U_\kappa/\norm{U_\kappa}_{\mathcal H^\ell(E)}$,
\[
\norm{U_\kappa}_{\mathcal H^{-\ell}(E)}=\sup_{\norm{\phi}_{\mathcal H^\ell(E)}\le1}\Bigl|\int_EU_\kappa\phi\,dx\Bigr|=\kappa^{-2\ell}\norm{U_\kappa}_{\mathcal H^{\ell}(E)}=\kappa^{-\ell}\norm{U_\kappa}_{L^2(E)} .
\]

For the localization estimate we argue by induction on $m$; the cases $m=0,1$ are Lemma~\ref{lem:smooth-ext}(v), with $c_0=c_1:=\tilde c$ (constants renamed). Let $m\ge2$ and assume the bound for $m-1$, for every $\eps$. Let $\zeta=\zeta(x)\in C^\infty(\R)$ with $\zeta=1$ on $\{\abs x\ge\eps\}$, $\zeta=0$ on $\{\abs x\le\eps/2\}$ and $\abs{\zeta^{(k)}}\le C_k\eps^{-k}$. Then $\zeta U_\kappa\in H^1_0(E)$ and
\[
\Delta(\zeta U_\kappa)=-\kappa^2\zeta U_\kappa+2\zeta'\partial_xU_\kappa+\zeta''U_\kappa ,
\]
so that elliptic regularity for the Dirichlet problem on the smooth domain $E$ (\cite[Theorem 8.13]{GilbargTrudinger2001}) gives
\[
\norm{U_\kappa}_{H^m(E\cap\{\abs x>\eps\})}\le\norm{\zeta U_\kappa}_{H^m(E)}\le C\norm{\Delta(\zeta U_\kappa)}_{H^{m-2}(E)}
\le C\bigl(\kappa^2+C_{m}\eps^{-m}\bigr)\norm{U_\kappa}_{H^{m-1}(E\cap\{\abs x>\eps/2\})}.
\]
By the induction hypothesis with $\eps/2$ in place of $\eps$, the right-hand side is at most $C_{m,\eps}\kappa^{2m}e^{-c_{m-1}\eps^2\kappa/4}$, which is the claim with $c_m:=c_{m-1}/4$.
\end{proof}
Please observe that the lemma directly implies, by the pointwise bound $\abs{\Delta^kv}\le2^k\abs{\nabla^{2k}v}$ in two dimensions,
\begin{align}
\label{eq:properHj}
\norm{\nabla^{j}U_\kappa}_{L^2(E)}\geq 2^{-\lfloor j/2\rfloor}\sqrt{c_0}\,\kappa^j,
\end{align}
for positive integers $j$.

\medskip
\noindent {\bf Acknowledgments}: Author 1 was supported by the Croatian Science Foundation project IP-2022-10-2962.  Author 2 was supported by the VR-Grant 2022-03862 of the Swedish Research Council, by the ERC-CZ grant LL2105 of the Czech Ministry of Education, Youth and Sports, and by the Charles University Research Centre program No. UNCE/24/SCI/005. Author 2 is a member of the Ne\v{c}as Centre for Mathematical Modeling. Author 3 was partially supported by NSF-DMS 2307538 and UMBC's Strategic Award for Research Transitions (START).

\medskip\noindent{\bf Use of AI tools.}
The authors used the large language models Claude (Anthropic) and ChatGPT (OpenAI) during the preparation of this manuscript, for early drafting of expository passages, editing, and language polishing. All mathematical content, proofs, and references were conceived, checked, and are the sole responsibility of the authors.

\end{document}